\documentclass[12pt]{amsart}
\usepackage{amssymb,latexsym}
\usepackage{enumerate}

\makeatletter
\@namedef{subjclassname@2010}{%
  \textup{2010} Mathematics Subject Classification}
\makeatother



\newtheorem{theorem}{Theorem}[section]
\newtheorem{corollary}[theorem]{Corollary}
\newtheorem{lemma}[theorem]{Lemma}
\newtheorem{proposition}[theorem]{Proposition}
\theoremstyle{definition}

\newtheorem{remark}[theorem]{Remark}
\newtheorem{example}[theorem]{Example}

\numberwithin{equation}{section}

\frenchspacing

\textwidth=13.5cm
\textheight=23cm
\parindent=16pt
\oddsidemargin=-0.5cm
\evensidemargin=-0.5cm
\topmargin=-0.5cm

\usepackage{amssymb}
\usepackage[utf8]{inputenc}
\usepackage[T1]{fontenc}
\usepackage{courier}
\newcommand{\fff}{\mathcal{F}}
\newcommand{\sss}{\mathcal{S}}
\newcommand{\aaa}{\mathcal{A}}
\newcommand{\bbb}{\mathcal{B}}
\newcommand{\eee}{\mathcal{E}}
\newcommand{\nn}{\mathbb{N}}
\newcommand{\fin}{[\mathbb{N}]^{<\omega}}
\newcommand{\infin}{[\mathbb{N}]}
\newcommand{\supp}{\text{supp}}
\newcommand{\ran}{\text{ran}}
\newtheorem{claim}{Claim}

\begin{document}

\baselineskip=17pt

\title[Concerning the Szlenk index]{Concerning the Szlenk index}

\author[RM Causey]{Ryan M Causey}
\address{Department of Mathematics \\ University of South Carolina \\
Columbia, SC 29208}
\email{CAUSEYRM@mailbox.sc.edu}

\date{}

\begin{abstract}
We discuss pruning and coloring lemmas on regular families.  We discuss several applications of these lemmas to computing the Szlenk index of certain $w^*$ compact subsets of the dual of a separable Banach space.  Applications include estimates of the Szlenk index of Minkowski sums, infinite direct sums of separable Banach spaces, constant reduction, and three space properties.  

We also consider using regular families to construct Banach spaces with prescribed Szlenk index.  As a consequence, we give a characterization of which countable ordinals occur as the Szlenk index of a Banach space, prove the optimality of a previous universality result, and compute the Szlenk index of the injective tensor product of separable Banach spaces.  
 
\end{abstract}

\subjclass[2010]{Primary 46B03; Secondary 46B28}

\keywords{Szlenk index, Universality, Embedding in spaces with finite dimensional decompositions, Ramsey theory}

\maketitle

\tableofcontents

\addtocontents{toc}{\setcounter{tocdepth}{1}}

\section{Introduction}
 
A classical result in Banach space theory is that every separable Banach space embeds isometrically in $\mathcal{C}[0,1]$.  One can ask whether other classes of Banach spaces, for example the class of Banach spaces having separable dual, admit a member which contains isomorphic copies of every member of that class.  For the case of Banach spaces having separable dual, Szlenk \cite{Sz} introduced the Szlenk index to prove that there is no Banach space having separable dual which contains isomorphic copies of all Banach spaces having separable dual.  Since its inception, the Szlenk index has been the object of significant investigation.  

Typically defined in terms of slicings of the unit ball of the dual of a separable Banach space, the Szlenk index of a separable Banach space is equal to the weakly null $\ell_1^+$ index of that space in the case that this space does not contain a copy of $\ell_1$ \cite{AJO}.  This fact allows for a modification of  certain transfinite versions of an argument of James \cite{James} involving equivalence of finite representability and crude finite representability of $\ell_1$ in a Banach space.  This argument can be used to yield new information about the Szlenk index and new methods for estimating it.  More generally, regular families play a key role in computing so-called $\sigma$-indices in separable Banach spaces.  Consequently, certain purely combinatorial results concerning colorings of regular families have as easy corollaries strong results about Szlenk index, including that of \cite{AJO}.  Moreover, regular families can be used to construct Banach spaces with prescribed weakly null $\ell_1^+$ behavior, which can be used to prove certain existence and non-existence results.  For example, we provide a characterization of which countable ordinals occur as the Szlenk index of a Banach space.  In \cite{Causey1}, it was shown that for each countable ordinal $\xi$ there exists a separable Banach space with Szlenk index $\omega^{\xi+1}$ which contains isomorphic copies of every separable Banach space having Szlenk index not exceeding $\omega^\xi$.  By being able to construct a Banach space with precise control over the weakly null $\ell_1^+$ index, we are able to prove the optimality of that result.  

In the first half of the paper, we discuss regular families, colorings and prunings thereof, and applications of these coloring results to computing the Szlenk index of certain subsets of the dual of a separable Banach space.  We generalize Alspach, Judd, and Odell's argument that the Szlenk index of a Banach space not containing $\ell_1$ is equal to its weakly null $\ell_1^+$ index in order to compute the Szlenk index of certain sets $K\subset X^*$, $X$ a separable Banach space.  We then deduce as easy applications of this work a number of corollaries, some old and some new.  In the second half of the paper, we discuss how to construct Banach spaces with prescribed weakly null $\ell_1^+$ structure.  As a consequence, we provide a characterization of the countable ordinals which occur as the Szlenk index of a Banach space and use this to prove the optimality of the universality results of \cite{Causey1} and \cite{Causey2}.  We also show how one can compute the Szlenk index of a Banach space having separable dual via embeddings into Banach spaces with shrinking basis having subsequential upper block estimates in certain mixed Tsirelson spaces.  With this, we prove an optimal result about the Szlenk index of an injective tensor product of two separable Banach spaces.  

The paper is arranged as follows.  In Section $2$, we discuss the necessary definitions concerning Banach spaces and finite dimensional decompositions.  In Section $3$, we discuss trees, regular families, and their use in computing ordinal indices. In this section we also discuss two useful pruning lemmas which will be used throughout.  In Section $4$, we state and prove the combinatorial lemmas concerning regular families.  In Section $5$, we define the Szlenk and $\ell_1^+$ weakly null indices and provide several examples of applications thereof.  In Section $6$, we discuss the use of mixed Tsirelson spaces in constructing Banach spaces with prescribed $\ell_1^+$ behavior and the special role played by these families.

\section{Banach spaces and finite dimensional decompositions}

If $X$ is a Banach space, we say a sequence $E=(E_n)$ of finite dimensional subspaces of $X$ is a \emph{finite dimensional decomposition} (FDD) for $X$ provided that for each $x\in X$, there exists a unique sequence $(x_n)$ so that $x_n\in E_n$ for each $n\in \nn$ and $x=\sum x_n$.  In this case, for each $n\in \nn$, the operator $x=\sum x_m\mapsto x_n$ is a bounded linear operator from $X$ to $E_n$, called the $n^{th}$ \emph{canonical projection}, denoted $P^E_n$.  For a finite set $A$, we let $P_A=\sum_{n\in A}P_n$.  By the principle of uniform boundedness, the \emph{projection constant of} $E$ \emph{in} $X$, given by $\sup_{m\leqslant n} \|P^E_{[m,n]}\|$, is finite.  We say $E$ is \emph{bimonotone} for $X$ if the projection constant of $E$ in $X$ is $1$.  It is well-known that if $E$ is an FDD for $X$, one can equivalently renorm $X$ to make $E$ a bimonotone FDD for $X$ with the new norm.   Throughout, we will assume that for each $n\in \nn$, $E_n\neq \{0\}$.  

We can consider $E_n^*$ as being embedded in $X^*$ via the adjoint $(P^E_n)^*$, although this embedding is not necessarily isometric unless $E$ is bimonotone.  We let $E^*=(E_n^*)$, and consider these as subspaces of $X^*$.  The FDD $E$ is said to be \emph{shrinking} for $X$ if $E^*$ is an FDD for $X^*$.  Since $E^*$ will always be an FDD for the closed span $[E_n^*]_{n\in \nn}$ with projection constant in this space not exceeding the projection constant of $E$ in $X$, $E$ is a shrinking FDD for $X$ if and only if $X^*=[E_n^*]_{n\in \nn}$.     

If $E$ is an FDD for $X$ and if $0=s_0<s_1<\ldots$, and $F_n=[E_k]_{s_{n-1}<k\leqslant s_n}$, then $F=(F_n)$ is called a \emph{blocking of} $E$.  In this case, $F$ is also an FDD for $X$ with projection constant in $X$ not exceeding the projection constant of $E$ in $X$.  If $E$ is shrinking, any blocking of $E$ will be as well.    

If $x\in X$, we let $\supp_E(x)=\{n\in \nn: P_n^Ex\neq 0\}$.  We let $\ran_E(x)$ be the smallest interval in $\nn$ which contains $\supp_E(x)$.  We let $c_{00}(E)=\{x\in X: |\supp_E(x)|<\infty\}$.  We say a (finite or infinite) sequence of non-zero vectors $(x_n)$ is a \emph{block sequence with respect to } $E$ provided $\max \supp_E(x_n)< \min \supp_E(x_{n+1})$ for each appropriate $n$.  

We let $\Sigma(E, X)$ denote all finite block sequences with respect to $E$ in $B_X$.  We say $\mathcal{B}\subset \Sigma(E,X)$ is a \emph{hereditary block tree} in $X$ with respect to $E$ if it contains all subsequences of its members.  If $\overline{\varepsilon}=(\varepsilon_i)\subset (0,1)$ and if $\mathcal{B}$ is a hereditary block tree, we let $$\mathcal{B}_{\overline{\varepsilon}}^{E,X}=\{(x_i)_{i=1}^n\in \Sigma(E,X): n\in \nn, \exists (y_i)_{i=1}^n\in \mathcal{B}, \|x_i-y_i\|< \varepsilon_i \text{\ }\forall 1\leqslant i\leqslant n\}.$$  If $(\varepsilon_i)$ is non-increasing, $\mathcal{B}_{\overline{\varepsilon}}^{E,X}$ is also a hereditary block tree in $X$ with respect to $E$.  

Given (finite or infinite) sequences $(e_n), (f_n)$ of the same length in (possibly different) Banach spaces, we say $(e_n)$ $C$-dominates $(f_n)$, or that $(f_n)$ is $C$-dominated by $(e_n)$, provided that for each $(a_n)\in c_{00}$, $$\Bigl\|\sum a_nf_n\Bigr\|\leqslant C \Bigl\|\sum a_ne_n\Bigr\|.$$  

If $E$ is an FDD for a Banach space $X$ and if $(e_n)$ is a normalized, $1$-unconditional basis for the Banach space $U$, we say $E$ \emph{satisfies subsequential} $C$-$U$ \emph{upper block estimates in} $X$ provided that for any normalized block sequence $(x_n)$ with respect to $E$, if $m_n=\max \supp_E(x_n)$, $(x_n)$ is $C$-dominated by $(e_{m_n})$.  This idea has occurred in other works, such as \cite{OSZ}, \cite{FOSZ}, and \cite{Causey1}, where $m_n$ was taken to be $\min \supp_E(x_n)$ rather than the maximum. Our definition is chosen for convenience within this work, and it does not affect the main theorems contained herein, or the main theorems contained in the cited works.  This is because for each basis $(e_n)$ considered in the main theorems of the cited works, and for each pair of sequences of natural numbers $k_1<k_2<\ldots$, $l_1<l_2<\ldots$ so that $\max\{k_n, l_n\}< \min \{k_{n+1}, l_{n+1}\}$, $(e_{k_n})$ and $(e_{l_n})$ are equivalent.

\begin{proposition} Let $X$ be a Banach space not containing $\ell_1$.  \begin{enumerate}[(i)] \item Suppose $Y\leqslant X$ is a closed subspace, $(x_n)\subset B_X $ is weakly null, and $\delta\in (0,1/2)$ is such that $\|x_n\|_{X/Y}<\delta$ for all $N\in \nn$.  Then there exists a weakly null sequence $(y_n)\subset B_Y$ and a subsequence $(x_{k_n})$ of $(x_n)$ so that for each $n\in \nn$, $\|x_{k_n}-y_n\|<4\delta$.  \item If $Q:X\to Z$ is a quotient map and $(z_n)\subset B_Z$ is weakly null, then for any $\delta>0$, there exists a weakly null sequence $(x_n)\subset 3B_X$ and a subsequence $(z_{k_n})$ of $(z_n)$ so that for all $n\in \nn$, $\|z_{k_n}-Qx_n\|< \delta$.  \end{enumerate}

\label{perturb}

\end{proposition}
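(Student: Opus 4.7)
The plan is to combine Rosenthal's $\ell_1$ theorem with the Odell-Rosenthal theorem (which asserts that in a separable Banach space not containing $\ell_1$, every element of the bidual is the weak$^*$-sequential limit of a sequence in the space, with the norm essentially preserved). The common strategy for both parts is to start with an obvious approximating sequence in the correct subspace, apply Rosenthal to extract a weakly Cauchy subsequence, identify its weak$^*$ limit $u$ as an element of the relevant bidual subspace with a controlled norm, and use Odell-Rosenthal to subtract off a sequence in the subspace that weak$^*$ converges to $u$, leaving behind a weakly null sequence of the desired form.

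For (i), pick $\tilde{y}_n\in Y$ with $\|x_n-\tilde{y}_n\|<\delta$, so that $\|\tilde{y}_n\|<1+\delta$. Rosenthal's theorem (applied in $Y$, which inherits absence of $\ell_1$ from $X$) yields a weakly Cauchy subsequence $(\tilde{y}_{n_k})$ with weak$^*$ limit $u\in X^{**}$. Since $Y^{**}=Y^{\perp\perp}$ is weak$^*$-closed in $X^{**}$ we have $u\in Y^{**}$, and comparison with the weakly null $(x_{n_k})$ forces $\|u\|\leqslant\delta$. Passing if necessary to a separable subspace of $Y$ containing $(\tilde{y}_{n_k})$ (so that Odell-Rosenthal applies), obtain $(v_k)\subset Y$ with $\|v_k\|\leqslant\|u\|\leqslant\delta$ and $v_k\to u$ weak$^*$. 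Then $\tilde{y}_{n_k}-v_k$ is weakly null in $Y$ with norm at most $1+2\delta$, so $y_k:=(\tilde{y}_{n_k}-v_k)/(1+2\delta)\in B_Y$, and a triangle-inequality computation gives
$$\|x_{n_k}-y_k\|\leqslant\bigl(\|x_{n_k}-\tilde{y}_{n_k}\|+\|v_k\|\bigr)+\|\tilde{y}_{n_k}-v_k\|\cdot\tfrac{2\delta}{1+2\delta}<2\delta+2\delta=4\delta.$$

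For (ii), lift each $z_n$ to $x_n'\in X$ with $Qx_n'=z_n$ and $\|x_n'\|<3/2$ (possible by the definition of the quotient norm), and apply Rosenthal again to obtain a weakly Cauchy subsequence $(x_{n_k}')$ with weak$^*$ limit $u\in X^{**}$. Weak$^*$ continuity of $Q^{**}$ together with the weak nullity of $(z_{n_k})$ forces $Q^{**}u=0$, so $u\in\ker Q^{**}=(\ker Q)^{\perp\perp}=(\ker Q)^{**}$, with $\|u\|\leqslant 3/2$. Applying Odell-Rosenthal inside $\ker Q$ produces $(v_k)\subset\ker Q$ with $\|v_k\|\leqslant\|u\|$ and $v_k\to u$ weak$^*$. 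Then $x_n:=x_{n_k}'-v_k$ is weakly null in $X$, lies in $3B_X$, and satisfies $Qx_n=z_{n_k}$ exactly, so $\|z_{n_k}-Qx_n\|=0<\delta$.

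The main obstacle is the passage from Goldstine's theorem (which only produces a \emph{net} in the subspace weak$^*$-converging to $u$) to an actual \emph{sequence} $(v_k)$ inside that subspace; this is precisely what Odell-Rosenthal supplies, and it is the essential non-elementary input to the proof. Everything else is triangle inequality together with the standard identification of the bidual of a subspace with its $\perp\perp$ inside the ambient bidual.
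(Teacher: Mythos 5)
Your argument is sound in its essentials for the separable case, but it takes a genuinely different and heavier route than the paper. The paper never leaves the space: after Rosenthal's theorem makes the approximating sequence $(u_n)\subset Y$ (respectively the lifts of the $z_n$ in (ii)) weakly Cauchy, it uses Mazur's theorem to choose convex blocks $v_n=\sum_{i\in I_n}a_ix_i$ (resp.\ of the $z_i$) of small norm and observes that $u_n-\sum_{i\in I_n}a_iu_i$ is weakly null, since a weakly Cauchy sequence minus far-out convex blocks of itself converges weakly to zero; no bidual element is ever produced and Odell--Rosenthal is not needed. You instead name the $w^*$ limit $u$ and subtract a sequence from the subspace converging $w^*$ to it. This buys a cleaner conclusion in (ii) (an exact lift, $Qx_k=z_{n_k}$), at the cost of invoking Odell--Rosenthal, which forces you to work inside separable subspaces. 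In (i) you handle that correctly: $u$ is the $w^*$ limit of a sequence lying in $Y$, hence belongs to $Y_0^{\perp\perp}\cong Y_0^{**}$ for $Y_0=\overline{\mathrm{span}}\{\tilde y_{n_k}\}$, and your constants check out.

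In (ii), however, there is a gap when $X$ (hence possibly $\ker Q$) is nonseparable, and the proposition carries no separability hypothesis. Odell--Rosenthal applies to a separable space without $\ell_1$, so you need a separable subspace $W\subseteq \ker Q$ with $u\in W^{\perp\perp}$. Unlike in (i), this is not automatic: $u$ is the $w^*$ limit of a sequence from $X$, not from $\ker Q$, and a general element of the bidual of a nonseparable space need not lie in the $w^*$ closure of any separable subspace (e.g.\ $\mathbf{1}\in\ell_\infty(\Gamma)=c_0(\Gamma)^{**}$ for uncountable $\Gamma$). The step can be repaired: enlarge $X_0=\overline{\mathrm{span}}\{x'_{n_k}\}$ by a closing-off argument to a separable $X_1$ with $\mathrm{dist}(x,\ker Q\cap X_1)=\mathrm{dist}(x,\ker Q)$ for all $x\in X_1$; then $Q|_{X_1}$ is a quotient map onto its closed image with kernel $W=X_1\cap\ker Q$, so $u\in X_1^{\perp\perp}$ and $Q^{**}u=0$ give $u\in\ker\bigl((Q|_{X_1})^{**}\bigr)=W^{\perp\perp}$, and Odell--Rosenthal applies in $W$. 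With that repair (or with a standing separability assumption, which covers every use of the proposition in the paper), your proof is complete; the paper's convex-blocking argument sidesteps the issue entirely and is the more elementary route.
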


\begin{proof} Several times, we will use Rosenthal's $\ell_1$ dichotomy \cite{Rosenthal}, which states that any bounded sequence in a Banach space either has a subsequence equivalent to the canonical $\ell_1$ basis or a subsequence which is weakly Cauchy.

(i) For each $n$, choose some $u_n\in Y$ so that $\|x_n-u_n\|<\delta$. By passing to a subsequence, we can assume that $(u_n)$ is weakly Cauchy.  Choose a convex block defined by $v_n=\sum_{i\in I_n}a_ix_i$ so that $\|v_n\|< \delta-\|x_n-u_n\|$.  Let $w_n=\sum_{i\in I_n}a_iu_i$.  Then $(u_n-w_n)$ is weakly null in $Y$ and $$\|u_n-w_n\|\leqslant \|x_n\|+\|x_n-u_n\|+ \|v_n\| + \sum_{i\in I_n}a_i\|x_i-u_i\| \leqslant 1+2\delta.$$  Moreover, \begin{align*} \|x_n-(u_n-w_n)\| & \leqslant \|x_n-v_n-(u_n-w_n)\|+ \|v_n\| \\ & \leqslant \|x_n-u_n\|+ \|v_n\|+ \sum_{i\in I_n}a_i\|x_i-u_i\|<2\delta.\end{align*} Then if $y_n$ is the normalization of $u_n-w_n$, $$\|x_n-y_n\| \leqslant \|x_n-(u_n-w_n)\| + \|y_n-(u_n-w_n)\| < 4\delta.$$  Since $\|u_n-w_n\|\geqslant 1-2\delta$, $(y_n)$ is seminormalized, and therefore also weakly null.  

(ii) Choose $\varepsilon>0$ to be determined.  For each $n\in \nn$, choose $u_n\in X$ with $\|u_n\|< 1+\varepsilon$ so that $Qu_n=z_n$.  By passing to a subsequence, we can assume $(u_n)$ is weakly Cauchy.  Choose a convex block $v_n=\sum_{i\in I_n}a_iz_i$ so that $\|v_n\|<\varepsilon$.  Let $w_n=\sum_{i\in I_n}a_iu_i$.  Then $$\|u_n-w_n\|\leqslant 1+\varepsilon + \sum_{i\in I_n}a_i(1+\varepsilon)=2+2\varepsilon<3$$ for appropriate $\varepsilon$.  Moreover, this sequence is weakly null.  Last, $$\|z_n-Q(u_n-w_n)\|= \|Qw_n\| =\|v_n\|< \varepsilon< \delta.$$  Thus taking $\varepsilon < \min \{1/2, \delta\}$ suffices.

\end{proof}
 
\section{Trees, derivatives, and indices}

\subsection{Trees on sets}

Throughout, if $P, Q$ are partially ordered sets, we say $f:P\to Q$ is \emph{order preserving} provided that if $x,y\in P$ with $x<_Py$, $f(x)<_Q f(y)$.  We say $f:P\to Q$ is an \emph{embedding} if it is a bijection so that for $x,y\in P$, $x<_Py$ if and only if $f(x)<_Qf(y)$.  

Given a set $S$, we let $S^\omega$ (resp. $S^{<\omega}$) denote the set of all infinite (resp. finite) sequences in $S$. We include the sequence of length zero, denoted $\varnothing$, in $S^{<\omega}$.  For $s\in S^{<\omega}$, we let $|s|$ denote the length of $s$. For $s,t\in S^{<\omega}$, we let $s\verb!^!t$ denote the concatenation of $s$ with $t$.    Given $s=(x_i)_{i=1}^n\in S^{<\omega}$, we let $s|_m=(x_i)_{i=1}^m$ for $0\leqslant m\leqslant n$.  We define the partial order $\prec$ on $S^{<\omega}$ by $s\prec s'$ provided $|s|< |s'|$ and $s=s'|_{|s|}$. If $s\prec s'$, we say $s$ is a \emph{predecessor} of $s'$, and $s'$ is a \emph{successor} of $s$.  If $|s'|=|s|+1$, we say $s$ is the \emph{immediate predecessor} of $s'$, and $s'$ is an \emph{immediate successor} of $s$.  Given a set $U\subset S^{<\omega}$, we let $C(U)$ denote the set of all finite, non-empty chains in $U\setminus \{\varnothing\}$.   We define a partial order $<$ on $C(U)$ by $c<c'$ provided $s\prec s'$ for each $s\in c$, $s'\in c'$.  

If $T\subset S^{<\omega}$ is downward closed with respect to the order $\prec$, we call $T$ a \emph{tree}, and we let $MAX(T)$ denote the maximal elements of $T$ with respect to the order $\prec$. We let $\widehat{T}=T\setminus \{\varnothing\}$. If $T$ contains all subsequences of its members, we say $T$ is \emph{hereditary}. If $T\subset S^{<\omega}$, we let $$T(s)=\{t\in S^{<\omega}: s\verb!^!t\in T\},$$ and note that if $T$ is a tree (resp. hereditary tree), $T(t)$ is a tree (resp. hereditary tree) as well. If $T$ is a tree, we call linearly ordered subsets of $T$ \emph{segments} of $T$, and maximal segments will be called \emph{branches} of $T$.  If $T$ is a tree on a vector space, we say $T$ is \emph{convex} provided it contains all convex blockings of its members.  We recall that for a sequence $(x_i)_{i=1}^n$ in a vector space, $(y_i)_{i=1}^m$ is a convex blocking of $(x_i)_{i=1}^n$ provided there exist $0=k_0<\ldots <k_m= n$ and non-negative scalars $(a_i)_{i=1}^n$ so that for each $j$, $\sum_{i=k_{j-1}+1}^{k_j} a_i=1$ and $y_j=\sum_{i=k_{j-1}+1}^{k_j} a_ix_i$.  

Given a tree $T$, we let $d(T)=T\setminus MAX(T)$, and note that this is a tree as well.  We define the countable transfinite derivations as follows. Throughout, $\omega, \omega_1$ will denote the first infinite and uncountable ordinals, respectively.  We let $$d^0(T)=T,$$ $$d^{\xi+1}(T)=d(d^\xi(T)),\text{\ \ } \xi<\omega_1,$$ and 
$$d^\xi(T)=\underset{\zeta<\xi}{\bigcap} d^\zeta(T), \xi<\omega_1 \text{\ a limit ordinal.}$$  Finally, we define the order $o$ of the tree $T$ by $$o(T)=\min \{\xi<\omega_1: d^\xi(T)=\varnothing\}$$ provided such a $\xi$ exists, and $o(T)=\omega_1$ otherwise.  

\subsection{Regular trees on $\nn$}

Throughout, if $M$ is any infinite subset of $\nn$, we let $[M]$ (resp. $[M]^{<\omega}$) denote the infinite (resp. finite) subsets of $M$.  We identify the subsets of $\nn$ in the natural way with strictly increasing sequences in $\nn$.  We topologize the power set of $\nn$ by identifying it with the Cantor set.  A set $\fff\subset \fin$ is called compact if it is compact with respect to this topology.  For $E,F\subset \nn$, we write $E<F$ to denote $\max E<\min F$.  For $n\in \nn$ and $E\subset \nn$, we write $n\leqslant E$ to denote $n\leqslant \min E$.  By convention, we let $\varnothing <E <\varnothing$ for any $E$.  Throughout, we will write $E\verb!^! F$ in place of $E\cup F$ in the case that $E<F$.  We write $n\verb!^!E$ (resp. $E\verb!^!n$) in place of $(n)\verb!^!E$ (resp. $E\verb!^! (n)$).    

Given $(k_i)_{i=1}^n, (l_i)_{i=1}^n\in \fin$, we say $(l_i)_{i=1}^n$ is a \emph{spread} of $(k_i)_{i=1}^n$ provided $k_i\leqslant l_i$ for each $1\leqslant i\leqslant n$.  We say $\fff\subset \fin$ is \emph{spreading} provided it contains all spreads of its members. With the identification of sets with sequences, we can naturally identify such a family with a tree on $\nn$, and we say $\fff$ is hereditary if it hereditary as a tree. We call a family $\fff\subset \fin$ \emph{regular} provided it is compact, spreading, and hereditary.

We say that a sequence $(E_i)_{i=1}^n\subset \fin$ is $\fff$ \emph{admissible} if it is successive and $(\min E_i)_{i=1}^n\in \fff$.  Given a regular family $\mathcal{G}$ and a set $E$, we say the successive sequence $(E_i)_{i=1}^n$ is the \emph{standard decomposition of} $E$ \emph{with respect to} $\mathcal{G}$ provided that $E=\cup_{i=1}^n E_i$ and for each $j\leqslant n$, $E_j$ is the maximal initial segment of $\cup_{i=j}^n E_i$ which is a member of $\mathcal{G}$.  Note that $E$ admits a standard decomposition with respect to $\mathcal{G}$ if and only if $(\min E)\in \mathcal{G}$, and in this case the standard decomposition is unique.   

If $(m_n)=M\in \infin$, the bijection $n\mapsto m_n$ induces a natural bijection between the power sets of $\nn$ and $M$, which we also denote $M$.  That is, $M(E)=(m_n: n\in E)$.  For $\fff\subset \fin$, we let $\mathcal{F}(M)=\{M(E):E\in \fff\}$.  If $M\in \infin$ and if $\fff\subset \fin$, we let $M^{-1}(\fff)=\{E: M(E)\in \fff\}$.  

Given regular families $\fff, \mathcal{G}$, we define $$(\fff, \mathcal{G})=\{F\verb!^!G: F\in\fff, G\in \mathcal{G}\},$$ \begin{align*} \fff[\mathcal{G}] & =\Bigl\{\underset{i=1}{\overset{n}{\bigcup}}E_i: E_1<\ldots<E_n, E_i\in \mathcal{G}, (\min E_i)_{i=1}^n\in \fff\Bigr\}\\ & = \Bigl\{\underset{i=1}{\overset{n}{\bigcup}}E_i: (E_i)_{i=1}^n\subset \mathcal{G} \text{\ is\ }\fff \text{\ admissible}\Bigr\}.\end{align*} We observe that a set $E\in\fff[\mathcal{G}]$ if and only if $E$ has an $\fff$ admissible standard decomposition $(E_i)_{i=1}^n$ with respect to $\mathcal{G}$. For a given $\fff$, we let $[\fff]^1=\fff$ and $[\fff]^{n+1}=\fff\bigl[[\fff]^n\bigr]$ for $n\in\nn$.  

 If $(\mathcal{G}_n)$ is a sequence of regular families, we let $$\mathcal{D}(\mathcal{G}_n)=\{E: \exists n\leqslant E\in \mathcal{G}_n\}.$$ We think of $(\fff, \mathcal{G})$ as the sum of the trees $\fff, \mathcal{G}$, $\fff[\mathcal{G}]$ as the product of $\fff, \mathcal{G}$, and $\mathcal{D}(\mathcal{G}_n)$ as the diagonalization of the families $\mathcal{G}_n$.  

For each $1\leqslant n$, let $\mathcal{A}_n=\{E\in \fin: |E|\leqslant n\}$ and $\mathcal{S}=\mathcal{D}(\mathcal{A}_n)$.  If $\zeta\leqslant \omega_1$ is a limit ordinal, we say that the family $(\mathcal{G}_\xi)_{0\leqslant \xi<\zeta}$ is \emph{additive} if for each $\xi<\zeta$, $\mathcal{G}_{\xi+1}=(\mathcal{A}_1, \mathcal{G}_\xi)$ and for each limit ordinal $\xi<\zeta$, there exists $\xi_n\uparrow \xi$ so that $\mathcal{G}_\xi=\mathcal{D}(\mathcal{G}_{\xi_n})$.  We say $(\mathcal{G}_\xi)_{0\leqslant \xi<\zeta}$ is \emph{multiplicative} if for each $\xi<\zeta$, $\mathcal{G}_{\xi+1}=\mathcal{S}[\mathcal{G}_\xi]$, and $(1)\in MAX(\mathcal{G}_0)$.  Observe in this case that $(1)\in MAX(\mathcal{G}_\xi)$ for every $\xi<\zeta$.  

If $\fff$ is regular, we observe that $\fff'$ is also regular, and $MAX(\fff)$ is the set of isolated points in $\fff$.  Thus $\fff'$ is the Cantor-Bendixson derivative of $\fff$.    In place of the Cantor-Bendixson index, we define the index $$\iota(\fff)=\min \{\xi<\omega_1: \fff^\xi\subset \{\varnothing\}\}.$$  It is easy to see that for $\fff$ hereditary, this set or ordinals is non-empty if and only if $\fff$ is compact, which is equivalent to $\fff$ not containing any infinite chain.  Moreover, if $\fff\neq \varnothing$, $\iota(\fff)+1$ coincides with the Cantor-Bendixson derivative of $\fff$.  The justification for using the index $\iota$ in place of the Cantor-Bendixson index is evident in the following proposition.  

\begin{proposition}Let $\fff, \mathcal{G}$, and $\mathcal{G}_n$ be non-empty regular families.  \begin{enumerate}[(i)]  \item For $0\leqslant \zeta,\xi<\omega_1$, $(\fff^\zeta)^\xi= \fff^{\zeta+\xi}$.  \item $(\fff, \mathcal{G})$ is regular and $\iota(\fff, \mathcal{G})=\iota(\mathcal{G})+\iota(\fff)$. \item  $\fff[\mathcal{G}]$ is regular and $\iota(\fff[\mathcal{G}])=\iota(\mathcal{G})\iota(\fff)$. \item For any $M\in \infin$, $M^{-1}(\fff)$ is regular and $\iota(M^{-1}(\fff))=\iota(\fff)$.\item For any $M\in \infin$, $M^{-1}(\fff[\mathcal{G}])= M^{-1}(\fff)[M^{-1}(\mathcal{G})]$. \item $\mathcal{D}(\mathcal{G}_n)$ is regular and $\iota(\mathcal{D}(\mathcal{G}_n))=\sup_n \iota(\mathcal{G}_n)$ if this supremum is not attained. \item If $M\in \infin$ and $\iota(\fff)\leqslant \iota(\mathcal{G})$, there exists $N\in [M]$ so that $\fff(N)\subset \mathcal{G}$. \item If $\zeta<\omega_1$ is a limit ordinal and $(\mathcal{G}_\xi)_{0\leqslant \xi<\zeta}$ is either additive or multiplicative, then for each $0\leqslant \xi\leqslant \eta<\zeta$, there exist $m,n\in \nn$ so that $\mathcal{G}_\xi\cap [m, \infty)^{<\omega}\subset \mathcal{G}_\eta$ and $\mathcal{G}_\xi\subset \mathcal{G}_{\eta+n}$.  \end{enumerate}

\label{regular facts}\end{proposition}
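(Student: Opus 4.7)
The plan is to dispatch all eight parts by transfinite induction, exploiting that the Cantor--Bendixson derivative $d$ interacts predictably with each of the three constructions $(\fff,\mathcal{G})$, $\fff[\mathcal{G}]$, $\mathcal{D}(\mathcal{G}_n)$ and with the pullback $M^{-1}$. Regularity (compactness, spreading, and hereditariness) of the constructed families will be read off the definitions, so the heart of each index computation is a single derivative identity that, once established, extends by part (i) to all ordinals up to the relevant bound.

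Part (i) follows by transfinite induction on $\xi$: the successor case is by definition, and the limit case uses that for a decreasing nest of compact hereditary trees the intersection commutes with further iterated derivations. For (ii), the key identity is $(\fff,\mathcal{G})'=(\fff,d(\mathcal{G}))$ whenever $\mathcal{G}\neq\{\varnothing\}$, because the maximal elements of $(\fff,\mathcal{G})$ are precisely $F\verb!^!G$ with $F\in MAX(\fff)$ and $G\in MAX(\mathcal{G})$; iterating $\iota(\mathcal{G})$ times exhausts $\mathcal{G}$ and leaves $\fff$ to be derived, so part (i) yields $\iota(\fff,\mathcal{G})=\iota(\mathcal{G})+\iota(\fff)$. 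Part (iii) applies the same idea to the rightmost block of the standard $\mathcal{G}$-decomposition: each derivation of $\fff[\mathcal{G}]$ acts on the tail block, and after $\iota(\mathcal{G})$ derivations the tail is removed and the problem reduces to $\fff'[\mathcal{G}]$. Induction on $\eta$ gives $\fff[\mathcal{G}]^{\iota(\mathcal{G})\eta}=\fff^\eta[\mathcal{G}]$, which at $\eta=\iota(\fff)$ closes out the product formula.

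Parts (iv)--(vi) are largely bookkeeping. For (iv), the crucial step is $(M^{-1}\fff)'=M^{-1}(\fff')$: by spreading of $\fff$, $M(E)$ is non-maximal in $\fff$ iff it admits arbitrarily large one-step extensions in $\fff$, and these can be realized inside $M$, so $E$ is non-maximal in $M^{-1}\fff$ iff $M(E)\in\fff'$. Part (v) is a direct calculation: the standard $M^{-1}(\mathcal{G})$-decomposition of $E$ is carried by the order-preserving map $M$ to the standard $\mathcal{G}$-decomposition of $M(E)$, and $\fff$-admissibility is preserved. For (vi), the requirement $n\leqslant E$ in the definition of $\mathcal{D}(\mathcal{G}_n)$ is what forces compactness, while spreading and hereditariness are immediate; the derivative identity $\mathcal{D}(\mathcal{G}_n)^\xi=\{E:\exists n\leqslant E,\,E\in\mathcal{G}_n^\xi\}$ together with (i) gives $\iota(\mathcal{D}(\mathcal{G}_n))=\sup_n\iota(\mathcal{G}_n)$ whenever the supremum is not attained.

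The step I expect to be the main obstacle is (vii), which is a comparison/embedding principle rather than a derivative computation and requires a Ramsey-theoretic pruning. My plan is to induct on $\iota(\fff)$ and apply the coloring/pruning lemmas promised in Section 4: at a successor stage $\iota(\fff)=\xi+1$, decompose elements of $\fff$ according to their minimum to realize $\fff$ as a finite-index shift of a regular family of smaller index, color $[M]^{<\omega}$ by membership in $\mathcal{G}$, and apply the inductive hypothesis on a suitable tail subsequence; at a limit stage, diagonalize over approximating families. This is where compactness of $\fff$ and the full Ramsey machinery enter. Part (viii) then follows by unfolding the defining recursions: for additive $(\mathcal{G}_\xi)$ one has $\mathcal{G}_\xi\subset\mathcal{G}_{\xi+1}$ immediately by writing $G=\varnothing\verb!^!G$, and at limits $\mathcal{G}_\xi=\mathcal{D}(\mathcal{G}_{\xi_n})$ supplies the tail containment $\mathcal{G}_{\xi_n}\cap[n,\infty)^{<\omega}\subset\mathcal{G}_\xi$; iterating finitely many successor steps compensates for the truncated head, and the same scheme works for multiplicative families using $\sss[\mathcal{G}_\xi]\supset\mathcal{G}_\xi$.
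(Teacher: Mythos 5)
Most of your outline follows the paper's own proof: (i), (iii)--(vi), and (viii) proceed essentially as you describe (for (iii) the paper proves exactly your identity $\fff[\mathcal{G}]^{\iota(\mathcal{G})\eta}=\fff^\eta[\mathcal{G}]$; for (vi) only one inclusion of your derivative identity is needed, and your stated equality is in fact true). For (ii) your single derivative identity $(\fff,\mathcal{G})'=(\fff,\mathcal{G}')$ for $\mathcal{G}\neq\{\varnothing\}$ is correct and gives a slightly different, arguably cleaner route than the paper, which instead localizes at $F\in MAX(\fff)$ and shows $(\fff,\mathcal{G})(F)=\mathcal{G}\cap(\max F,\infty)^{<\omega}$, hence $\fff=(\fff,\mathcal{G})^{\iota(\mathcal{G})}$. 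Note, however, that your justification via ``$MAX((\fff,\mathcal{G}))$ consists precisely of concatenations of maximal elements'' is not the right argument; what one actually verifies is that $E$ is non-maximal if and only if it admits a decomposition whose $\mathcal{G}$-part is non-maximal (strip the last added point and use hereditariness and spreading), and that the identity passes through limit stages because each $E$ has only finitely many decompositions. In (viii), the multiplicative case needs the extra observation that the peeling argument only works down to $[2,\infty)^{<\omega}$ and that $(1)$ is handled separately using $(1)\in MAX(\mathcal{G}_\xi)$; this is minor but not covered by ``the same scheme.''

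The genuine gap is (vii), which you rightly flag as the main obstacle but for which the plan as stated does not work. First, the coloring lemmas of Section 4 cannot be invoked: they are proved later and rest on this proposition (the fine Schreier families and the extended embeddings used there depend on (vii) and (viii)), so that route is circular. Second, ``coloring $[M]^{<\omega}$ by membership in $\mathcal{G}$'' and applying the dichotomy of Theorem \ref{Gasparis} does not settle the matter: the dichotomy may return the alternative $\mathcal{G}\cap[M]^{<\omega}\subset\fff$, which is not a contradiction when $\iota(\fff)=\iota(\mathcal{G})$ and does not yield $\fff(N)\subset\mathcal{G}$. What is actually needed, and what the paper supplies, is an elementary diagonalization claim with no Ramsey input: if for every $n$ and every $M'\in\infin$ there exist $k_n$ and $N'\in[M']$ with $\fff(n)(N')\subset\mathcal{G}(k_n)$, then for every $M$ there is $N\in[M]$ with $\fff(N)\subset\mathcal{G}$; one builds nested sets $M_1\supset M_2\supset\cdots$ with $k_n\leqslant M_n$, takes the diagonal $N=(m^n_n)$, and uses spreading to absorb the discrepancy between $k_n$ and the diagonal points. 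With that claim in hand, an induction (on $\iota(\fff)$, or the paper's minimal-counterexample induction on $\iota(\mathcal{G})$ and then $\iota(\fff)$) closes, using that $\iota(\fff(n))<\iota(\fff)$ and that $\iota(\mathcal{G}(k))$ is nondecreasing and eventually at least $\iota(\fff(n))$, so a suitable $k_n$ exists. Without this pasting device your inductive step at the successor and limit stages does not go through.
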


\begin{proof}   

(i) By induction on $\xi$ for $\zeta$ fixed.  The $\xi=0$ and successor cases are trivial.  If $\xi$ is a limit ordinal, $\zeta+\xi$ is also a limit, so $$(\fff^\zeta)^\xi = \underset{\eta<\xi}{\bigcap} (\fff^\zeta)^\eta = \underset{\eta<\xi}{\bigcap} \fff^{\zeta+\eta} = \underset{\eta<\zeta+\xi}{\bigcap} \fff^\eta = \fff^{\zeta+\xi}.$$  Here we have used that $\eta\mapsto \zeta+\eta$ is continuous and that the Cantor-Bendixson derivatives of $\fff$ are decreasing.

(ii) It is clear that a subset (resp. spread) of $F\verb!^!G$, $F\in \fff, G\in \mathcal{G}$, can be written in the form $F_0\verb!^! G_0$ where $F_0$ (resp. $G_0$) is a subset (resp. spread) of $F$ (resp. $G$).  Thus $(\fff, \mathcal{G})$ is spreading and hereditary.  If $N|_n\in (\fff, \mathcal{G})$ for all $n\in \nn$, let $m\in \nn$ be maximal so that $N_m\in \fff$.  Then choose $n\in \nn$ maximal so that $(N\setminus N|_m)|_n\in \mathcal{G}$.  It is clear that for any $k>n+m$, $N|_k\notin (\fff, \mathcal{G})$.  This is because if $F\verb!^! G=N|_k$, then either $F$ is a proper extension of $N|_m$ or $G$ has a subset which is a proper extension of $(N\setminus N|_m)|_n$, either of which contradict the maximality of either $m$ or $ n$. 

Next, we note that for $F\in MAX(\fff)$, $(\fff, \mathcal{G})(F)= \mathcal{G}\cap (\max F, \infty)^{<\omega}$.  Since $\iota(\mathcal{G}\cap (\max F, \infty))=\iota(\mathcal{G})$, $(\varnothing) = (\fff, \mathcal{G})(F)^{\iota(\mathcal{G})}$, which means $F\in MAX((\fff, \mathcal{G})^{\iota(\mathcal{G})})$.  If $E\in (\fff, \mathcal{G})\setminus \fff$, $E=F\verb!^!G$ for $F\in MAX(\fff)$ and $\varnothing \neq G\in \mathcal{G}$.  The above argument shows that $E\notin (\fff, \mathcal{G})^{\iota(\mathcal{G})}$.  Therefore $\fff=(\fff, \mathcal{G})^{\iota(\mathcal{G})}$, and $\iota((\fff, \mathcal{G}))=\iota(\mathcal{G})+\iota(\fff)$.  

(iii) Any spread (resp. subset) of $\cup_{i=1}^n E_i$ is an $\fff$ admissible union of spreads (resp. subsets) $F_i$ of $E_i$.  If $N|_n\in \fff[\mathcal{G}]$ for all $n\in \nn$, choose recursively $n_0, n_1, n_2, \ldots$ maximal so that $n_0=0$ and $(N\setminus N|_{n_{i-1}})|_{n_i}\in \mathcal{G}$ for all $i\in \nn$.  Let $m_i=\min (N\setminus N|_{n_{i-1}})$ and choose $k$ so that $(m_i)_{i=1}^k\notin \fff$.  Then for any $s>\sum_{i=1}^k n_i$, $N|_s\notin \fff[\mathcal{G}]$.  Indeed, if $N|_s\in \fff[\mathcal{G}]$, let $(E_i)_{i=1}^t$ be the standard decomposition of $N|_s$ with respect to $\mathcal{G}$.  Then $\fff\ni (\min E_i)_{i=1}^t$ is a proper extension of $(m_i)_{i=1}^k$, a contradiction.  

We prove by induction that $\fff[\mathcal{G}]^{\iota(\mathcal{G})\xi} = \fff^\xi[\mathcal{G}]$. The result is clear if $\fff=\{\varnothing\}$ or $\mathcal{G}=\{\varnothing\}$, so assume $\iota(\fff), \iota(\mathcal{G})>0$. The base case is true by definition.  If $(E_i)_{i=1}^n\subset \mathcal{G}$ is $\fff$ admissible with $F:=(\min E_i)_{i=1}^n\in \fff'$, then there exists $m> \max E_n$ so that for each $i\geqslant m$, $F\verb!^!i\in \fff$.  Then $\mathcal{G}\cap (m, \infty)^{<\omega}\subset \fff[\mathcal{G}]\bigl(\cup_{i=1}^n E_i\bigr)$.  This means $\cup_{i=1}^n E_i\in \fff[\mathcal{G}]^{\iota(\mathcal{G})}$, whence $\fff'[\mathcal{G}]\subset \fff[\mathcal{G}]^{\iota(\mathcal{G})}$.  Next, fix $E\in \fff[\mathcal{G}]$ and let $(E_i)_{i=1}^n$ be the standard decomposition of $E$ with respect to $\mathcal{G}$. Suppose that $(\min E_i)_{i=1}^n\in MAX(\fff)$.   Then $\fff[\mathcal{G}]\bigl(\cup_{i=1}^n E_i\bigr) = \mathcal{G}(E_n)$.  But $\iota(\mathcal{G}(E_n))<\iota(\mathcal{G})$, which means $\cup_{i=1}^n E_i\notin \fff[\mathcal{G}]^{\iota(\mathcal{G})}$.  This means $\fff[\mathcal{G}]^{\iota(\mathcal{G})}\subset \fff'[\mathcal{G}]$, and these sets are equal.  Applying this argument again to $\fff^\xi$ in place of $\fff$ yields the successor case.  Last, for a limit ordinal $\xi$, $\iota(\mathcal{G})\xi$ is also a limit ordinal.  Then $$\fff[\mathcal{G}]^{\iota(\mathcal{G})\xi} = \underset{\zeta<\iota(\mathcal{G})\xi}{\bigcap} \fff[\mathcal{G}]^\zeta =\underset{\zeta<\xi}{\bigcap} \fff[\mathcal{G}]^{\iota(\mathcal{G})\zeta} = \underset{\zeta<\xi}{\bigcap} \fff^\zeta[\mathcal{G}]= \fff^\xi[\mathcal{G}].$$  The last equality follows from the fact that $E$ will lie in either of the two sets if and only if $E$ has a maximal decomposition $(E_i)_{i=1}^n$ with respect to $\mathcal{G}$ and that this sequence is $\fff^\xi$ admissible, while this second property is equivalent to being $\fff^\eta$ admissible for every $\zeta<\xi$.  

(iv) If $E\in M^{-1}(\fff)$ and $F$ is a subset (resp spread) of $E$, $M(F)$ is a subset (resp. spread) of $M(E)$. Therefore $M(F)\in \fff$, whence $F\in M^{-1}(\fff)$.  If $N\in \infin$ is such that $N|_n\in M^{-1}(\fff)$ for all $n\in \nn$, then $M(N|_n)\in \fff$ for all $n\in \nn$, contradicting the compactness of $\fff$.  Thus $M^{-1}(\fff)$ is regular.  It is easy to see that for any $0\leqslant \xi<\omega_1$, $M^{-1}(\fff)^\xi = M^{-1}(\fff^\xi)$, so $\iota(M^{-1}(\fff))= \iota(\fff)$. 

(v) Let $F\in M^{-1}(\fff[\mathcal{G}])$.  Then write $M(F)=\cup_{i=1}^n E_i$, where $(E_i)_{i=1}^n\subset \mathcal{G}$ is $\fff$ admissible. Note that for each $1\leqslant i\leqslant n$, $E_i=M(F_i)$ for some $F_i$, which necessarily lies in $M^{-1}(\mathcal{G})$.  Moreover, $M(\min F_i)_{i=1}^n= (\min E_i)_{i=1}^n\in \fff$, and $(\min F_i)_{i=1}^n\in M^{-1}(\fff)$. Note that $F=\cup_{i=1}^n F_i\in M^{-1}(\fff)[M^{-1}(\mathcal{G})]$, so that $M^{-1}(\fff[\mathcal{G}])\subset M^{-1}(\fff)[M^{-1}(\mathcal{G})]$.   

If $E\in M^{-1}(\fff)[M^{-1}(\mathcal{G})]$, write $E=\cup_{i=1}^n E_i$, $(\min E_i)_{i=1}^n\in M^{-1}(\fff)$, $E_i\in M^{-1}(\mathcal{G})$.  Then $(\min M(E_i))_{i=1}^n = M(\min E_i)_{i=1}^n\in \fff$ and $M(E_i)\in \mathcal{G}$. Therefore $M(E)=\cup_{i=1}^n M(E_i)\in \fff[\mathcal{G}]$, and $E\in M^{-1}(\fff[\mathcal{G}])$.  

(vi) Suppose $E\in \mathcal{D}(\mathcal{G}_n)$ and fix $m\leqslant E\in \mathcal{G}_m$.  If $F$ is a subset (resp. spread) of $E$, $m\leqslant F\in \mathcal{G}_m$, so $F\in \mathcal{D}(\mathcal{G}_n)$.  If $N|_m\in \mathcal{D}(\mathcal{G}_n)$ for all $m\in \nn$, then we can choose for each $m\in \nn$ some $k_m\in \nn$ so that $k_m\leqslant N$ and $N|_m\in \mathcal{G}_{k_m}$.  We can, of course, assume that for some $k\leqslant N$, $k_m=k$ for all $m$. Then $N|_m\in \mathcal{G}_k$ for all $m$, a contradiction.  

It is clear that $\iota(\mathcal{D}(\mathcal{G}_n))\geqslant \sup_n \iota(\mathcal{G}_n\cap [n, \infty)^{<\omega})= \sup_n \iota(\mathcal{G}_n)$.  We prove by induction on $\xi< \sup_n \iota(\mathcal{G}_n)$ that $\mathcal{D}(\mathcal{G}_n)^\xi \subset \mathcal{D}(\mathcal{G}_n^\xi)$.  Of course the base case is true. Suppose we have the result for some $\xi<\sup_n \iota(\mathcal{G}_n)$.  Clearly $\varnothing \in \mathcal{D}(\mathcal{G}_n^{\xi+1})$.  If $\varnothing \neq E\in \mathcal{D}(\mathcal{G}_n)^{\xi+1}$, there exists $E\prec F\in \mathcal{D}(\mathcal{G}_n)^\xi\subset \mathcal{D}(\mathcal{G}_n^\xi)$.  Choose $m\leqslant F\in \mathcal{G}_m^\xi$, so that $m\leqslant E\in \mathcal{G}_m^{\xi+1}$.  Therefore $E\in \mathcal{D}(\mathcal{G}_n^{\xi+1})$.  Last, suppose $\xi<\sup_n \iota(\mathcal{G}_n)$ is a limit ordinal.  Clearly $\varnothing \in \mathcal{D}(\mathcal{G}_n^\xi)$.  If $\varnothing \neq E\in \mathcal{D}(\mathcal{G}_n)^\xi$, then we can fix $\xi_m\uparrow \xi$ and $k_m\leqslant E\in \mathcal{G}_{k_m}^{\xi_m}$.  Of course, we can assume $k=k_m$ for all $m\in \nn$, and $E\in \mathcal{G}_k^\xi$.  This proves the claim.  Fix $m\in \nn$ and suppose $\zeta> \max_{1\leqslant n\leqslant m}\iota(\mathcal{G}_n)$.  Then $(m)\notin \mathcal{G}_n^\zeta\cap [n, \infty)^{<\omega}$ for any $n\in \nn$, and $(m)\notin \mathcal{D}(\mathcal{G}_n)^\zeta$.  This proves $\iota(\mathcal{D}(\mathcal{G}_n))\leqslant \sup_n \iota(\mathcal{G}_n)$.  

(vii) First, we observe that for any regular $\fff$, $(\iota(\fff(n)))_{n\in \nn}$ is a non-decreasing sequence.  This is because $\fff(n)$ is homeomorphic to a  subset of $\fff(m)$ for $n\leqslant m$ via the map $E\mapsto (k+m: k\in E)$.  We next observe that if $\iota(\fff)=\xi+1$, then $\iota(\fff(n))=\xi$ eventually.  First, if $\iota(\fff(n))>\xi$ for some $n\in \nn$, then $(n)\in \fff^{\xi+1}$, which means $\iota(\fff)>\xi+1$.  If $\iota(\fff(n))<\xi$ for all $n\in \nn$, then $\fff^\xi$ contains no singletons, and therefore $\iota(\fff)\leqslant \xi$.  

Next, if $\xi$ is a limit ordinal and $\iota(\fff)= \xi$, then $\iota(\fff(n))\nearrow \xi$.  We know $\iota(\fff(n))<\xi$ for all $n\in \nn$ by the same argument as in the successor case.  We know this sequence is non-decreasing, again by the same reasoning as in the successor case.  If $\iota(\fff(n))\leqslant \zeta+1<\xi$ for all $n\in \nn$, then $\iota(\fff)\leqslant \zeta<\xi$.  

Before completing (vii), we complete the following 

\begin{claim} Suppose $\fff, \mathcal{G}$ are regular families with $\iota(\mathcal{G})\geqslant 1$.  Suppose also that for any $n\in\nn$ and any $M\in \infin$, there exist $k_n\in \nn$ and $N\in [M]$ so that $\fff(n)(N)\subset \mathcal{G}(k_n)$.   Then for any $M\in \infin$, there exists $N\in [M]$ so that $\fff(N)\subset \mathcal{G}$.

\end{claim}

\begin{proof}[Proof of claim] If $\iota(\mathcal{G})\geqslant 1$, then for some $k_0$, $\{(k): k\geqslant k_0\}\subset \mathcal{G}$.  Let $M_0=M$ and choose $M_1\in [M_0]$, $k_1\in \nn$ so that $\mathcal{F}(1)(M_1)\subset \mathcal{G}(k_1)$.  By replacing $M_1$ with a subset of $M_1$, we can assume $k_0,k_1\leqslant M_1$.  We can do this since if $M'\in[M_1]$, each member of $\mathcal{F}(1)(M')$ is a spread of $\mathcal{F}(1)(M_1)$, so the desired containment is preserved by passing to $M'$.  

Next, assume that for $1\leqslant i<n$, we have chosen $M_i\in [M_0]$ and $k_i\in \nn$ so that $M_i\in [M_{i-1}]$, $\fff(i)(M_i)\subset \mathcal{G}(k_i)$, and $k_i\leqslant M_i$.  Then choose $k_n\in \nn$ and $M_n\in [M_{n-1}]$ so that $\fff(n)(M_n)\subset \mathcal{G}(k_n)$, and again assume that $k_n\leqslant M_n$.  This completes the recursive choices of $k_n$ and $M_n$.  

Let $M_n=(m^n_i)_i$ and let $N=(m_n^n)$.  Note that $m_1^1<m_2^2<\ldots$ and $k_n\leqslant m_n^n$.  We claim that $\fff(N)\subset \mathcal{G}$.  To see this, fix $E\in \fff$.  If $|E|=0$, $N(E)=\varnothing\in \mathcal{G}$.  If $|E|=1$, then for some $n\in \nn$, $M(E)=(m_n^n)\in \{(k):k\geqslant k_0\}\subset \mathcal{G}$.  Last, if $|E|>1$, we can write $E=n\verb!^!F$ for some $n\in \nn$ and $F\in \fff(n)$.  Since $n<F$, $N(F)$ is a spread of $M_n(F)\in (\fff(n))(M_n)\subset \mathcal{G}(k_n)$.  Therefore $N(F)\in \mathcal{G}(k_n)$, and $k_n\verb!^! N(F)\in \mathcal{G}$.  But since $k_n\leqslant m_n^n$, and since $N(E)=m_n^n\verb!^!N(F)$ is a spread of $k_n\verb!^! N(F)$, $N(E)\in \mathcal{G}$.

\end{proof}

We return to (vii).  If the result were false, we could choose $\zeta<\omega_1$ minimal so that there exists $\eta \leqslant \zeta$ and regular families $\fff, \mathcal{G}$ so that $\iota(\fff)= \eta$, $\iota(\mathcal{G})=\zeta$, and $M\in\infin$ so that for each $N\in [M]$, $\mathcal{F}(N)\not\subset \mathcal{G}$.  Next, we could choose $\xi\leqslant \zeta$ a minimal value of $\eta$ so that the indicated $\fff, \mathcal{G}$, and $M\in \nn$ exist. We assume we have fixed such $\fff, \mathcal{G}, M$.   We consider several cases.  

First, if $\iota(\mathcal{G})=0$, then $\mathcal{G}=\{\varnothing\}=\fff$.  Clearly this cannot be.  

If $\zeta$ is a successor, say $\zeta=\beta+1$, then there exists $n\in \nn$ so that for each $m\geqslant n$, $\iota(\mathcal{G}(m))=\beta$.  If $\xi\leqslant \beta$, then there exists $N\in [M]$ so that $\fff(N)\subset \mathcal{G}(n)\subset \mathcal{G}$, which also cannot be.  Thus if $\zeta=\beta+1$, it must be true that $\xi=\beta+1=\zeta$.  Then for each $m\in \nn$, $\iota(\fff(m))\leqslant \beta$, and by the hypothesis for any $M'\in \infin$ there exist $N'\in [M']$ so that $\fff(m)(N')\subset \mathcal{G}(n)$.   By the claim, we deduce that there exists $N\in [M]$ so that $\fff(N)\subset \mathcal{G}$, and this contradiction means that $\zeta$ cannot be a successor.  

Last, suppose $\zeta$ is a limit ordinal.  Then $\iota(\mathcal{G}(n))\nearrow \zeta$.  If $\xi$ is a successor, then $\xi<\zeta$ and $\iota(\fff(n))\leqslant \xi<\zeta$ for each $n\in \nn$.  If $\xi$ is a limit, then for each $n\in \nn$, by our remarks above, $\iota(\fff(n))<\xi\leqslant \zeta$.  Therefore we can choose a sequence $(k_n)\in \infin$ so that $\iota(\fff(n))\leqslant \iota(\mathcal{G}(k_n))$.  Then by the inductive hypothesis, for $n\in \nn$ and any $M'\in \infin$, there exists $N'\in [M']$ so that $\mathcal{F}(n)(N')\subset \mathcal{G}(k_n)$.  Again, our claim implies that there exists $N\in [M]$ so that $\fff(N)\subset \mathcal{G}$, and this contradiction exhausts the possibilities of ways that (vi) could fail.

(viii) First assume $(\mathcal{G}_\xi)_{0\leqslant \xi<\zeta}$ is either additive or multiplicative.  We prove the first part by induction on $\zeta$.  The $\zeta=\omega$ case is clear, since the families $\mathcal{G}_0\subset \mathcal{G}_1\subset \ldots$ are linearly ordered by inclusion in this case.  Suppose that for a given $\eta<\zeta$ and each $\xi\leqslant \eta$, the conclusion holds.  Suppose $0\leqslant \xi \leqslant \eta+1$.  Then either $\xi=\eta+1$ or $\xi\leqslant \eta$.  In the first case, we can take $m=1$.  In the second case, choose some $m\in \nn$ so that $\mathcal{G}_\xi\cap [m, \infty)^{<\omega}\subset \mathcal{G}_\eta$.  Since $\mathcal{G}_\eta\subset \mathcal{G}_{\eta+1}$, $\mathcal{G}_\xi\cap [m, \infty)^{<\omega}\subset \mathcal{G}_{\eta+1}$.  Last, suppose $\eta<\zeta$ is a limit ordinal and the conclusion holds for each $0\leqslant \xi\leqslant \gamma<\eta$.  Fix $\xi<\eta$ and let $\eta_n\uparrow \eta$ be such that $\mathcal{G}_\eta=\mathcal{D}(\mathcal{G}_{\eta_n})$.  Choose some $n\in \nn$ so that $\xi<\eta_n$ and $k\in \nn$ so that $\mathcal{G}_\xi\cap [k, \infty)^{<\omega}\subset \mathcal{G}_{\eta_n}$.  Let $m=\max\{k,n\}$.  Then $$\mathcal{G}_\xi\cap [m, \infty)^{<\omega}\subset \mathcal{G}_{\eta_n} \cap[n, \infty)^{<\omega}\subset \mathcal{G}_\eta.$$  This completes the first statement in both the additive case and multiplicative cases.

Next, assume $(\mathcal{G}_\xi)_{0\leqslant \xi<\zeta}$ is additive. Observe that if $\mathcal{G}_\xi\cap [m, \infty)^{<\omega}\subset \mathcal{G}_\eta$, then $\mathcal{G}_\xi\cap [m-1, \infty)^{<\omega} \subset (\mathcal{A}_1, \mathcal{G}_\eta)=\mathcal{G}_{\eta+1}$.  By induction, $\mathcal{G}_\xi=\mathcal{G}_\xi\cap [1, \infty)^{<\omega}\subset \mathcal{G}_{\eta+m-1}$.  

Last, assume $(\mathcal{G}_\xi)_{0\leqslant \xi<\zeta}$ is multiplicative.  Observe that $\mathcal{G}_0\subset \mathcal{G}_\xi$ and $(1)\in  MAX(\mathcal{G}_\xi)$ for each $0\leqslant \xi<\zeta$.  We claim that if $\mathcal{G}_\xi \cap [m,\infty)^{<\omega} \subset \mathcal{G}_\eta $ for $m>2$, then $\mathcal{G}_\xi \cap [m-1, \infty)^{<\omega} \subset \mathcal{G}_{\eta+1}$.  This is because if $E=(m-1)\verb!^!F\in \mathcal{G}_\xi [m-1, \infty)^{<\omega}$, then $F\in \mathcal{G}_\xi\cap [m, \infty)^{<\omega} \subset \mathcal{G}_\eta$.  Then $(m-1, \min F)\in \mathcal{S}$, and $E=(m-1)\verb!^!F\in \mathcal{S}[\mathcal{G}_\eta]=\mathcal{G}_{\eta+1}$.  This means that if $\mathcal{G}_\xi\cap [m, \infty)^{<\omega}$, $\mathcal{G}_\xi \cap [2, \infty)^{<\omega} \subset \mathcal{G}_{\eta+ m-2}$.  But since $(1)\in MAX(\mathcal{G}_\xi)\cap \mathcal{G}_{\eta+m-2}$, $\mathcal{G}_\xi = \{(1)\}\cup (\mathcal{G}_\xi\cap [2, \infty)^{<\omega}) \subset \mathcal{G}_{\eta+m-2}$.

\end{proof}

We are now ready to define the fine Schreier families $(\mathcal{F}_\xi)_{0\leqslant \xi<\omega_1}$. These families were defined in \cite{OSZ}, and are a finer version of the more familiar Schreier families defined in \cite{AA}.  We let $\mathcal{F}_0=\{\varnothing\}$.  Next, if $\mathcal{F}_\xi$ has been defined, we let $\mathcal{F}_{\xi+1}=(\mathcal{A}_1, \mathcal{F}_\xi)$.  If $\xi<\omega_1$ is a limit ordinal and $\mathcal{F}_\zeta$ has been defined for each $\zeta<\xi$ so that $(\fff_\zeta)_{0\leqslant \zeta<\xi}$ is additive, fix $\eta_n\uparrow \xi$.  By Proposition \ref{regular facts} (viii), we can choose recursively some natural numbers $m_n$ so that $\fff_{\eta_n+m_n}\subset \fff_{\eta_{n+1}+m_{n+1}}$ for each $n\in \nn$.  We let $\xi_n=\eta_n+m_n$ and let $\mathcal{F}_\xi=\mathcal{D}(\fff_{\xi_n})$. 

We next define the Schreier families, $(\mathcal{S}_\xi)_{0\leqslant \xi<\omega_1}$.  We let $\mathcal{S}_0=\fff_1$,  $\mathcal{S}_{\xi+1}=\mathcal{S}[\mathcal{S}_\xi]$, and if $\mathcal{S}_\zeta$ has been defined for each $\zeta$ less than the countable limit ordinal $\xi$, we fix $\xi_n\uparrow \xi$ and define $\mathcal{S}_\xi=\mathcal{D}(\mathcal{S}_{\xi_n})$.   Proposition \ref{regular facts} and our construction yield the following.

\begin{proposition} For each $0\leqslant \xi<\omega_1$, $\mathcal{F}_\xi$ is regular with $\iota(\mathcal{F}_\xi)=\xi$.  Moreover, for each limit $\xi<\omega_1$, there exists $\xi_n\uparrow \xi$ so that $\fff_\xi=\mathcal{D}(\fff_{\xi_n})$ and so that for each $n\in \nn$, $\fff_{\xi_n}\subset \fff_{\xi_{n+1}}$. For each $0\leqslant \xi<\omega_1$, $\mathcal{S}_\xi$ is regular with $\iota(\mathcal{S}_\xi)=\omega^\xi$.   

\end{proposition}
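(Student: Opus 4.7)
The plan is to prove both statements simultaneously by transfinite induction on $\xi$, with Proposition~\ref{regular facts} as essentially the only tool. At each stage, regularity and the index computation drop out of the recursive definition combined with one of the algebraic identities (ii), (iii), (vi) of that proposition.

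For $(\fff_\xi)$, the base case is immediate since $\fff_0=\{\varnothing\}$ is trivially regular with index $0$. At a successor $\xi+1$, the definition $\fff_{\xi+1}=(\mathcal{A}_1,\fff_\xi)$ together with Proposition~\ref{regular facts}(ii) gives regularity and $\iota(\fff_{\xi+1})=\iota(\fff_\xi)+\iota(\mathcal{A}_1)=\xi+1$. At a countable limit $\xi$, fix $\eta_n\uparrow \xi$, and use Proposition~\ref{regular facts}(viii) recursively to produce natural numbers $m_n$ with $\fff_{\eta_n+m_n}\subset \fff_{\eta_{n+1}+m_{n+1}}$; set $\xi_n=\eta_n+m_n$. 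Since $\xi$ is a limit and $\eta_n<\xi$, we have $\xi_n<\xi$ for every $n$, and since $\eta_n\leqslant \xi_n$ and $\eta_n\to \xi$, also $\xi_n\to\xi$. Applying the inductive identity $\iota(\fff_{\xi_n})=\xi_n$ to the nesting $\fff_{\xi_n}\subset\fff_{\xi_{n+1}}$ yields $\xi_n\leqslant\xi_{n+1}$, so this sequence is non-decreasing with supremum $\xi$ not attained. Proposition~\ref{regular facts}(vi) then gives that $\fff_\xi=\mathcal{D}(\fff_{\xi_n})$ is regular with $\iota(\fff_\xi)=\sup_n\xi_n=\xi$. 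The "moreover" clause is built into the construction itself, since the nesting $\fff_{\xi_n}\subset\fff_{\xi_{n+1}}$ was exactly what (viii) was invoked to deliver.

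The Schreier families are handled in the same way. The base case $\sss_0=\fff_1=\mathcal{A}_1$ is regular with index $1=\omega^0$. At a successor, $\sss_{\xi+1}=\sss[\sss_\xi]$ is regular by Proposition~\ref{regular facts}(iii), with index $\iota(\sss_\xi)\cdot\iota(\sss)=\omega^\xi\cdot\omega=\omega^{\xi+1}$; the factor $\iota(\sss)=\omega$ comes from (vi) applied to $\sss=\mathcal{D}(\mathcal{A}_n)$, since $\iota(\mathcal{A}_n)=n$ and $\sup_n n=\omega$ is not attained. At a limit $\xi$ with $\xi_n\uparrow\xi$, the inductive hypothesis gives $\iota(\sss_{\xi_n})=\omega^{\xi_n}$, and these values strictly increase to $\omega^\xi$ since $\alpha\mapsto\omega^\alpha$ is continuous and strictly increasing on ordinals, so the supremum is not attained. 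Proposition~\ref{regular facts}(vi) again yields regularity and $\iota(\sss_\xi)=\omega^\xi$.

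The only step requiring genuine care is the limit case for $(\fff_\xi)$: one must verify that the recursive invocation of (viii) actually produces a sequence converging to $\xi$ while maintaining the required nesting. This is essentially bookkeeping once one observes that (viii) provides some $m$ depending on the current and target ordinals, and that $\eta_n+m_n<\xi$ is automatic because $\xi$ is a limit ordinal and $\eta_n<\xi$. Everything else is a direct appeal to the algebraic identities already established.
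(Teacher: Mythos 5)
Your proof is correct and follows exactly the route the paper intends: the paper gives no separate argument, stating only that Proposition \ref{regular facts} together with the recursive construction yields the result, and your induction (using (ii) at successors, (vi) at limits, (iii) for the Schreier successor step, and (viii) for the nested choice of $\xi_n$) is precisely that argument spelled out. The bookkeeping in your limit case, including $\eta_n+m_n<\xi$ and the non-attainment of the supremum, is handled correctly.
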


A straightforward induction proof shows that if $0\leqslant \xi<\omega_1$ and $E\in \fff_\xi'$, then $E\verb!^!(1+\max E)\in \fff_\xi$.  We will implicitly use this fact in our proofs, but it is inessential.

We recall the following dichotomies for subsets of $\fin$.   \begin{theorem}\cite{G} For $\fff, \mathcal{G}\subset \fin$ hereditary, for any $N\in \infin$ there exists $M\in [N]$ so that either $$\fff\cap [M]^{<\omega}\subset \mathcal{G} \text{\ \ or\ \ }\mathcal{G}\cap [M]^{<\omega} \subset \fff.$$  \label{Gasparis}\end{theorem}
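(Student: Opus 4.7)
My plan is to prove the dichotomy via a Galvin--Prikry Ramsey argument that rules out the joint failure of the two alternatives. I would apply Galvin--Prikry first to the closed set $X = \{L \in [N] : \fff \cap [L]^{<\omega}\subset \mathcal{G}\}$, obtaining $M_1 \in [N]$ with either $[M_1]\subset X$ (in which case the first alternative holds with $M = M_1$) or $[M_1]\cap X = \varnothing$. In the latter case, every $L\in [M_1]$ has some finite subset in $\fff\setminus \mathcal{G}$, and by choosing one of minimum cardinality we may insist that this subset lies in the family of minimal witnesses
$$T_\fff := \{E\in\fff\setminus\mathcal{G} : \forall\, E'\subsetneq E,\ E'\in\mathcal{G}\}.$$
Running the symmetric argument on $[M_1]$ with $\fff$ and $\mathcal{G}$ interchanged either gives the second alternative or produces $M_2\in[M_1]$ such that every $L\in[M_2]$ has a subset in the analogously defined $T_\mathcal{G}$.

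The central observation, and where the hereditariness of \emph{both} families is essential, is that $T_\fff\cup T_\mathcal{G}$ is an antichain under proper inclusion. Indeed, if $E\in T_\fff$ and $F\in T_\mathcal{G}$ satisfied $E\subsetneq F$, then $F\in\mathcal{G}$ combined with hereditariness of $\mathcal{G}$ would force $E\in\mathcal{G}$, contradicting $E\in\fff\setminus\mathcal{G}$; all other combinations give contradictions of the same type. In particular $T_\fff\cap T_\mathcal{G}=\varnothing$, and no two distinct initial segments of the same infinite sequence can both lie in $T_\fff\cup T_\mathcal{G}$.

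To promote the ``subset'' witnesses to ``initial segment'' witnesses, I would apply Galvin--Prikry to the open set $X_\fff = \{L\in[M_2] : \exists k,\ L|_k\in T_\fff\}$, obtaining $M_3\in[M_2]$. The possibility $[M_3]\cap X_\fff=\varnothing$ is ruled out by a rearrangement: pick $E\in T_\fff$ with $E\subset M_3$ (possible since $M_3\in[M_1]$) and form $L = E\cup (M_3\cap (\max E,\infty))\in[M_3]$; the first $|E|$ elements of $L$ are precisely $E$, so $L\in X_\fff$, a contradiction. Thus $[M_3]\subset X_\fff$, and the analogous step produces $M_4\in[M_3]$ on which every infinite subset has an initial segment in $T_\mathcal{G}$ as well.

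Finally, taking $L = M_4$ itself, it has an initial segment $E\in T_\fff$ and an initial segment $F\in T_\mathcal{G}$. As initial segments of the same infinite set they are comparable under inclusion, so the antichain property forces $E = F$, contradicting $T_\fff\cap T_\mathcal{G}=\varnothing$. This contradiction shows one of the two alternatives must hold. The main obstacle, and the real structural content of the proof, is the antichain property of $T_\fff\cup T_\mathcal{G}$; once that is in place, the Galvin--Prikry applications and the rearrangement trick for promoting subsets to initial segments are routine.
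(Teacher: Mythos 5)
Your proof is correct, and it is worth noting that the paper itself does not prove this statement at all: it is quoted with a citation to Gasparis \cite{G}, so what you have written is a self-contained substitute rather than a variant of an in-paper argument. Each step checks out. The set $X=\{L\in[N]:\fff\cap[L]^{<\omega}\subset\mathcal{G}\}$ is the intersection over $E\in\fff\setminus\mathcal{G}$ of the clopen sets $\{L:E\not\subset L\}$, hence closed, and $X_\fff$ is a union of basic clopen sets, hence open, so in fact the Nash--Williams theorem for open/closed partitions already suffices and the full Borel Galvin--Prikry theorem is a convenience rather than a necessity. The minimal-cardinality witnesses do land in $T_\fff$ (hereditariness of $\fff$ keeps proper subsets in $\fff$, and minimality then forces them into $\mathcal{G}$); the antichain property of $T_\fff\cup T_\mathcal{G}$ is exactly as you say, with the two cross cases using hereditariness of $\mathcal{G}$ and of $\fff$ respectively and the within-$T_\fff$ case using the minimality built into the definition; and the rearrangement that upgrades subset witnesses to initial-segment witnesses is legitimate precisely because you chose $E\subset M_3$ (available since $M_3\in[M_1]$), so that $E\cup(M_3\cap(\max E,\infty))$ really is an infinite subset of $M_3$ whose first $|E|$ elements are $E$. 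The endgame is airtight: two initial segments of $M_4$ are comparable, equality is excluded by $T_\fff\cap T_\mathcal{G}=\varnothing$, and proper inclusion by the antichain property, so the assumption that both closed-set alternatives failed is contradictory. What your route buys is brevity and transparency at the price of invoking the Ramsey-property black box; the cited source proves the dichotomy by its own combinatorial analysis, so if you intend your argument as a replacement for the citation you should state explicitly which form of the Nash--Williams/Galvin--Prikry theorem you are relying on, but no mathematical gap needs to be filled.
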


\begin{theorem}\cite{PR} For a regular family $\fff$, if $\mathcal{A}, \mathcal{B}\subset MAX(\fff)$ are such that $\aaa\cup \bbb=MAX(\fff)$, then there exists $M\in [\nn]$ so that either $$ MAX(\fff)\cap[M]^{<\omega}\subset \mathcal{A}\text{\ \ or\ \ } MAX(\fff)\cap [M]^{<\omega}\subset \mathcal{B}.$$  

\label{PR}

\end{theorem}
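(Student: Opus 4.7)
The plan is to reduce the statement to the Galvin--Prikry dichotomy for open subsets of $\infin$, using the observation that $MAX(\fff)$ is a Sperner family: no member of $MAX(\fff)$ is a proper $\prec$-predecessor of another, which is immediate from the definition of maximality (any chain $F_1\prec F_2$ in $\fff$ would put $F_1$ in $d(\fff)$).

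Given $\mathcal{A}, \mathcal{B} \subset MAX(\fff)$ with $\mathcal{A} \cup \mathcal{B} = MAX(\fff)$, I would form the open set $\widetilde{\mathcal{A}} := \bigcup_{a \in \mathcal{A}} \{N \in \infin : a \prec N\}$, a union of basic clopen cylinders in the Cantor topology on $\infin$. Applying the Galvin--Prikry theorem to $\widetilde{\mathcal{A}}$ produces $M \in \infin$ with either $[M] \subset \widetilde{\mathcal{A}}$ or $[M] \cap \widetilde{\mathcal{A}} = \varnothing$. In the first case, given $F \in MAX(\fff) \cap [M]^{<\omega}$, the sequence $N := F\verb!^!(M \cap (\max F, \infty))$ lies in $[M] \subset \widetilde{\mathcal{A}}$, so some $N|_k$ belongs to $\mathcal{A} \subset MAX(\fff)$; since both $N|_k$ and $F = N|_{|F|}$ are initial segments of $N$ in the Sperner family $MAX(\fff)$, they must coincide, giving $F \in \mathcal{A}$. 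In the second case, if some $F \in MAX(\fff) \cap [M]^{<\omega}$ lay in $\mathcal{A}$, then extending $F$ arbitrarily inside $M$ would produce a member of $[M] \cap \widetilde{\mathcal{A}}$, contradicting the case assumption; thus $F \in \mathcal{B}$.

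The main obstacle I expect is the choice of Ramsey-theoretic input. Galvin--Prikry for open sets is the minimum needed and is entirely standard, though not explicitly developed in this paper. A self-contained alternative more in keeping with the paper's combinatorial tone is induction on $\iota(\fff)$: the case $\iota(\fff) = 0$ is trivial, and for $\iota(\fff) \geqslant 1$ each regular family $\fff(n)$ has strictly smaller index by Proposition \ref{regular facts}(vii). Applying the inductive hypothesis (in its relative form, which follows via the change-of-variable in Proposition \ref{regular facts}(iv)) to each coloring $F \mapsto $ color of $n\verb!^!F$ on $MAX(\fff(n))$ yields for each admissible $n$ some $M_n$ making that coloring constant on $MAX(\fff(n)) \cap [M_n]^{<\omega}$, of color $c_n \in \{A, B\}$. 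An infinite pigeonhole on $\{c_n\}$, followed by a diagonal selection of $(m_k)$ with each $m_{k+1}$ chosen in $\bigcap_{j \leqslant k} M_{m_j}$ above $m_k$ (arranging the $M_{m_j}$ to be nested as one goes, so the intersection stays infinite), then assembles the required $M$.
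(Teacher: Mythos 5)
The paper does not prove Theorem \ref{PR}; it simply cites Pudl\'{a}k--R\"{o}dl, so any correct argument is ``new'' relative to the text. Your first route is sound and is the standard reduction: the key facts you use are (a) that $MAX(\fff)$ is a $\prec$-antichain, which is immediate, and (b) that $\widetilde{\aaa}$ is a countable union of cylinders $\{N\in\infin : N\cap[1,\max a]=a\}$, hence open, so Galvin--Prikry (even the open/Nash-Williams version) applies; and the extension $N=F\verb!^!(M\cap(\max F,\infty))$ together with the antichain property correctly pins down the initial segment. Your second, self-contained route by transfinite induction on $\iota(\fff)$ is also the ``right'' elementary proof and fits the paper's toolkit (Proposition \ref{regular facts}(iv) and (vii)), but the order of operations as you phrase it is backwards: if you pigeonhole on the colors $c_n$ \emph{before} diagonalizing, you are left with independently chosen $M_n$'s whose finite intersections along your chosen subsequence need not be infinite, and the parenthetical about ``arranging nesting as one goes'' cannot retroactively fix colors that were already assigned. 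The clean version is to diagonalize first: build $N_0\supset N_1\supset\ldots$ and $n_k=\min N_{k-1}$ recursively, at stage $k$ applying the (relativized) inductive hypothesis to $\fff(n_k)$ \emph{inside} $N_{k-1}$ to produce $N_k$ and the emergent color $c_k$, and only then pigeonhole on $(c_k)$ to extract $M=(n_k)_{k\in K}$; the nesting then gives $F'\subset N_k$ for the tail $F'$ of any $F\in MAX(\fff)\cap[M]^{<\omega}$ with $\min F=n_k$, and $F'\in MAX(\fff(n_k))$ by maximality of $F$. With that reordering, both of your arguments are complete proofs of the cited theorem.
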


\subsection{The pruning lemmas and applications}

In this section, we discuss two useful lemmas involving prunings.  The notion of a pruning is the regular family analogue of passing to a subsequence of a sequence.  The statement and proof of the pruning lemma require notations which belie the simplicity of the underlying idea, so we say a word about the content before stating it.  Let $\fff\subset \fin$ be a regular family.  For each $E\in \fff'$, suppose that the sequence of immediate successors of $E$ in $\fff$ has a subsequence with some desired property $P_E$ which is allowed to depend on $E$.  Then beginning at the root $\varnothing$ of $\fff$, we can pass to a subsequence of the immediate successors of $\varnothing$ (while ``pruning'' the rest from the tree) so that the remaining sequence has the desired property $P_\varnothing$.  For each immediate successor $E$ of $\varnothing$ which survives the pruning, we pass to a subsequence of the immediate successors of $E$ in $\fff$ which have the desired property $P_E$, and so on.  So, beginning with the root of the tree, we recursively prune the levels of the tree so that in the pruned tree $\mathcal{G}$, for each $E\in \mathcal{G}'$, the sequence of immediate successors of $E$ in $\mathcal{G}$ has the desired property.  All this is done so that, although we have passed to subsequences, $\fff$ and $\mathcal{G}$ have the same ``size.''  

We will say that a function $\phi:\fff\to \fff$ is a \emph{pruning} provided that for each $E\in \fff'$, there exists a strictly increasing function $\psi_E:[s(E), \infty)\to [s(E), \infty)$ so that for each $n\geqslant s(E)$, $\phi(E\verb!^!n) = \phi(E)\verb!^! \psi_E(n)$.  Here, $s(E)=\min \{n\in \nn: E\verb!^!n\in \fff\}$.  The first lemma is essentially contained in \cite{AJO}, so we omit the proof.

\begin{lemma}\cite{AJO} Let $\fff$ be a regular family.  For each $E\in \fff'$, suppose $P_E\in (\fin)^\omega$ is such that some subsequence $(E\verb!^! m)_{m\in M}$ of $(E\verb!^!m)_{m\geqslant s(E)}$ lies in $P_E$.  Then there exists a pruning $\phi:\fff\to \fff$ so that for each $E\in \fff'$, $(\phi(E\verb!^!n))_{n\geqslant s(E)}\in P_{\phi(E)}$.  

\label{pruning lemma}

\end{lemma}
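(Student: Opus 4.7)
The plan is to construct $\phi$ by induction on the length of sequences in $\fff$. Set $\phi(\varnothing)=\varnothing$ and suppose $\phi$ has been defined on all $F\in \fff$ with $|F|\leqslant k$ so that $\phi(F)$ is a spread of $F$ and $\phi$ already realizes the pruning condition on $\fff'\cap \{F:|F|<k\}$ via strictly increasing maps $\psi_F$.

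For the inductive step, fix $E\in \fff'$ with $|E|=k$. Since $\phi(E)$ spreads $E$ and $E\in \fff'$, for any $n$ with $E\verb!^!n\in \fff$ and any $n'\geqslant \max\{n,\max \phi(E)+1\}$ the sequence $\phi(E)\verb!^!n'$ is a spread of $E\verb!^!n$, hence lies in $\fff$; in particular $\phi(E)\in \fff'$. Apply the hypothesis to $\phi(E)$ to obtain an infinite $M\subset [s(\phi(E)),\infty)$ with $(\phi(E)\verb!^!m)_{m\in M}\in P_{\phi(E)}$. Replace $M$ by $M\cap [\max\{s(E),\max \phi(E)+1\},\infty)$, a cofinite subset. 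Enumerate the result in increasing order as $(m_n)_{n\geqslant s(E)}$, set $\psi_E(n)=m_n$, and declare $\phi(E\verb!^!n):=\phi(E)\verb!^!\psi_E(n)$. Then $\psi_E:[s(E),\infty)\to [s(E),\infty)$ is strictly increasing, each $\phi(E\verb!^!n)$ is an element of $\fff$ (by the spreading axiom applied again, since $m_n>\max \phi(E)$ and $m_n\geqslant s(E)$) and is a spread of $E\verb!^!n$, and the induction closes.

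The one non-routine point is the replacement of $M$ by a cofinite subset: one must know this preserves membership in $P_{\phi(E)}$. This is the only substantive obstacle, and it is automatic in the intended applications of the lemma because each $P_E$ is a combinatorial condition on sequences that is unaffected by passage to a cofinite tail. Granting that, by construction $(\phi(E\verb!^!n))_{n\geqslant s(E)}$ coincides with the chosen subsequence $(\phi(E)\verb!^!m)_{m\in M}$ and therefore lies in $P_{\phi(E)}$. Iterating through the levels of $\fff$ produces the desired pruning.
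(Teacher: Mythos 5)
The paper does not actually contain a proof of this lemma (it is deferred to \cite{AJO}), so I can only judge your argument on its own terms; it is the standard level-by-level recursion, and the structural part is correct: maintaining that $\phi(E)$ is a spread of $E$ guarantees $\phi(E)\in \fff'$, the trimmed enumeration automatically satisfies $\psi_E(n)\geqslant n$ (strict monotonicity together with $\psi_E(s(E))\geqslant s(E)$), and hence $\phi(E\verb!^!n)=\phi(E)\verb!^!\psi_E(n)$ is a spread of $E\verb!^!n$ and lies in $\fff$, so the recursion closes.

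The step you flag, however, is a genuine additional hypothesis and not merely a scruple. Nothing forces $s(\phi(E))\geqslant s(E)$: a spread of $E$ can have strictly smaller $s$-value (take $\fff$ to be the hereditary spreading hull of $\{(1,10),(2,3)\}$, so that $s((1))=10$ while $s((2))=3$), so the set $M$ furnished by the hypothesis at $\phi(E)$ may contain indices below $s(E)$, which the definition of a pruning excludes from the range of $\psi_E$. Moreover one cannot always evade this by choosing $\phi$ more cleverly, because the requirement at earlier nodes can force $\phi(E)$ onto such a vertex: in the example above, $\phi(\varnothing)=\varnothing$ is forced (there are sets of size two above $(1)$), and taking $P_\varnothing$ to consist solely of the sequence $((n+1))_{n\geqslant 1}$ forces $\phi((1))=(2)$, while taking $P_{(2)}$ to consist solely of the full sequence $((2,n))_{n\geqslant 3}$ makes the conclusion at $(1)$ unattainable, even though every $P_E$ can be chosen to satisfy the stated hypothesis. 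So the lemma as literally stated, for arbitrary $P_E\subset (\fin)^\omega$, is false; the intended reading, and the way the lemma is used everywhere in this paper (weak nullity, $K$-nullity, blocking, convergence and smallness conditions), is that each $P_E$ is stable under passing to subsequences (tails would suffice). Under that reading your trimming is harmless and your proof is complete; the honest fix is to add this closure assumption to the statement rather than to appeal to the intended applications.
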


For convenience, in the examples below we freely relabel and denote a pruned tree the same way as the original tree.  In these examples, we will say $(x_E)_{E\in \fff}$ is a weakly null tree (resp. $w^*$ null tree, block tree) if for each $E\in \fff'$, the sequence $(x_{E_n})$ is weakly null (resp. $w^*$ null, a block sequence), where $(E_n)$ is the sequence of immediate successors of $E$ in $\fff$ with the natural enumeration.  

\begin{example} If $X$ is a Banach space with FDD $F$ and $(x_E)_{E\in \widehat{\fff}}\subset X$ is a weakly null tree so that $\inf_{E\in \widehat{\fff}} \|x_E\|=c>0$, then for fixed $\varepsilon>0$, for each $E\in \widehat{\fff}$ we can find $z_E\in c_{00}(F)$ so that $\|z_E\|=\|x_E\|$, $\|x_E-z_E\|< \varepsilon_{|E|}$, and so that for each $E\in \fff'$, $\supp_F(E\verb!^!n)\to \varnothing$.  Here $(\varepsilon_n)\subset (0,1)$ is decreasing to zero at a rate which depends on $c, \varepsilon$, and the projection constant of $F$ in $X$.  If $P_E$ consists of sequences $(E_n)$ of immediate successors of $E$ in $\fff$ so that $(z_{E_n})$ is a seminormalized sequence of successively supported vectors, we can prune to obtain a pruned tree $(y_E)_{E\in \widehat{\fff}}$ of $(x_E)_{E\in \widehat{\fff}}$ and $(u_E)_{E\in \widehat{\fff}}$ of $(z_E)_{E\in \widehat{\fff}}$ so that $\|y_E-u_E\|< \varepsilon_{|E|}$ for each $E\in \widehat{\fff}$ and so that for each $E\in \fff'$, $(u_{E\verb!^!n})$ is a block sequence with respect to $F$.  With an auspicious choice of $(\varepsilon_n)$, for each $E\in \widehat{\fff}$, $(y_{E|_i})_{i=1}^{|E|}$ and $(u_{E|_i})_{i=1}^{|E|}$ will be $(1+\varepsilon)$-equivalent.

\end{example}

\begin{example} Fix a function $f:\fin\to (0,1)$ so that $\sum_{E\in \fin}f(E)<\infty$.  Suppose $g:\fff\to \mathbb{R}$ is any function so that for each $E\in \fff'$, $g(E\verb!^!n)\to 0$.  Then we can find a pruning $\phi:\fff\to \fff$ so that $g(\phi(E))<f(E)$ for each $E\in \fff$.  We will use this in two cases.  

Suppose $\varnothing \neq K\subset X^*$. If $(x_E)_{E\in \widehat{\fff}}\subset B_X$ is such that for each $E\in \widehat{\fff}$ and each $x^*\in K$, $x^*(x_{E\verb!^!n})\to 0$, we say $(x_E)_{E\in \widehat{\fff}}$ is a $K$ \emph{null} tree. Note that if $(c_k)\subset C(\fff)$ is a sequence of pairwise disjoint segments and $(x_k)$ is a sequence so that $x_k$ is a convex combination of $(x_E)_{E\in c_k}$, $(x_k)$ need not be pointwise null on $K$.  We wish to overcome this, which we can easily do under the assumption that $K$ is norm separable.  Let $(x_n^*)$ be a dense sequence in $K$ and let $d(x)=\sum c_n|x^*_n(x)|$, where $(c_n)$ is any sequence of positive numbers so that $\sum c_n \|x_n\|<\infty$.  Note that $(x_n)\subset B_X$ is pointwise null on $K$ if and only if $d(x_n)\to 0$. Suppose that $(x_E)_{E\in \widehat{\fff}}\subset B_X$ is a $K$ null tree, $g(E)=d( x_E)$ for $E\in \widehat{\fff}$, and let $g(\varnothing)=0$.  After pruning, we may assume $d( x_E)< f(E)$ for each $E\in \fff$.  Then suppose $(c_k)_k$ are pairwise disjoint members of $C(\fff)$ and $y_k\in \text{co}(x_E:E\in c_k)\subset B_X$.  Then \begin{align*} \sum_k d(y_k) & \leqslant \sum_k\sum_{E\in c_k} d( x_E) \leqslant \sum_k \sum_{E\in c_k} f(E) \\ & \leqslant \sum_{E\in \fin}f(E)<\infty. \end{align*} Thus $d( y_k)\to 0$, which means $(y_k)$ is pointwise null on $K$.  In the sequel, any $K$ null tree $(x_E)_{E\in \widehat{\fff}}$ in a Banach space $X$ so that any sequence $(x_k)$ with $x_k\in \text{co}\{x_E:E\in c_k\}$, $(c_k)\subset C(\fff)$ pairwise disjoint, is pointwise null on $K$ will be called a \emph{strongly} $K$ \emph{null tree}. In the case that $K=B_{X^*}$, we call a $K$ null tree a \emph{weakly null tree} and a strongly $K$ null tree a \emph{strongly weakly null tree}.

\end{example}

\begin{example}

$(B, d)$ is a metric space and $(b_E)_{E\in \fff}\subset B$ is a tree so that for each $E\in \fff'$, $b_{E\verb!^!n}\to b_E$. We call such a tree a \emph{convergent tree}.  For $E\in \widehat{F}$, let $g(E)=d(b_E, b_{E|_{|E|-1}})$.  Then by passing to a pruning and relabeling, we can assume $d(b_E, b_{E|_{|E|-1}})< f(E)$.  We claim that the resulting tree, which we also denote by $(b_E)_{E\in \fff}$, is such that $E\mapsto b_E$ is continuous. To see this, it is sufficient to show that if $E< E_k$, $k\in \nn$, are such that $\min E_k$ strictly increases and $F_k:=E\verb!^!E_k\in \fff$ for each $k\in \nn$, then $b_{F_k}\to b_E$.  Let $c_k=\{F: E\prec F\preceq E_k\}$, so $(c_k)$ are pairwise disjoint segments.  Therefore \begin{align*} \sum_k d(b_{F_k}, b_E ) & \leqslant \sum_k \sum_{i=|E|+1}^{|F_k|} d(b_{F_k|_i}, b_{F_k|_{i-1}}) \\ & < \sum_k\sum_{F\in c_k} f(F) < \infty.\end{align*} In the sequel, any tree $(b_E)_{E\in \fff}\subset B$ so that $E\mapsto b_E$ is continuous will be called a \emph{continuous tree}. In the case that $B=B_{X^*}$ for some separable Banach space $X$ and $d$ is a metric compatible with the $w^*$ topology on $B_{X^*}$, we refer to these trees as $w^*$ \emph{convergent} and $w^*$ \emph{continuous}, respectively.

\end{example}

\begin{example} Suppose that $X$ is a Banach space and $S, K\subset B_{X^*}$ are norm separable, non-empty sets.  Suppose that $(x_n^*)\subset K-K$ is a $w^*$ null sequence so that $\|x_n^*\|> \varepsilon$ for all $n\in \nn$.  First we can choose for each $n\in \nn$ some $x_n\in B_X$ so that $x^*_n(x_n)> \varepsilon$.  By passing to subsequences, we can assume the sequence $(x_n)$ is pointwise convergent on $S \cup K$.  For $\delta>0$, we can pass to a further subsequences and assume that for any $m<n$, $|x_n^*(x_m)|< \delta$.  Then we let $y_n= (x_{2n}-x_{2n-1})/2$ and $y_n^*= x_{2n}^*$.  Then $y_n^*(y_n)\geqslant \varepsilon/2- \delta/2$ and $(y_n)$ is pointwise null on $S\cup K$.  

Next, suppose $(x^*_E)_{E\in \fff}\subset K-K$ is a $w^*$ null tree so that $\|x^*_E\|>\varepsilon$ for all $E\in \widehat{\fff}$.  We can choose for each $E\in \widehat{\fff}$ some $x_E\in B_X$ so that $x^*_E(x_E)>\varepsilon$.  By using the previous paragraph and pruning, we can assume that for some $\varepsilon'\in (0, \varepsilon/2)$, $(x_E)_{E\in \widehat{\fff}}$ is an $S \cup K$ null tree and $x^*_E(x_E)> \varepsilon'$ for each $E\in \widehat{\fff}$.  Next, we fix decreasing $(\varepsilon_n)\subset (0,1)$ and prune $(x_E)_{E\in \widehat{\fff}}$ using the rule that a sequence $(u_n)$ in $X$ has property $P_E$ provided $|x^*_F(u_n)|< \varepsilon_{|E|+1}$ for all $\varnothing \preceq F\preceq E$ and all $n\in \nn$.  Of course, we pass to the corresponding pruning of $(x^*_E)_{E\in \fff}$.  The result is pair of trees $(x_E)_{E\in \widehat{\fff}}$ and $(x_E^*)_{E\in \fff}$ so that $(x_E)_{E\in \widehat{\fff}}$ is $S\cup K$ null, $(x_E^*)_{E\in \fff}$ is $w^*$ null, and if $\varnothing \preceq E\prec F\in \fff$, $|x^*_E(x_F)|< \varepsilon_{|F|}$.  We last pass to a pruning of $(x_E^*)_{E\in \fff}$ using the rule that a sequence $(u_n^*)$ has property $P_E $ provided $|u_n^*(x_F)|< \varepsilon_{|E|+1}$ for each $\varnothing \prec F\preceq E$.  After passing to the corresponding pruning of $(x_E)$, we have obtained $S$ null and $w^*$ null trees $(x_E)_{E\in \widehat{\fff}}\subset B_X$ and $(x_E^*)_{E\in \fff}\subset K-K$ so that $x^*_E(x_E)\geqslant \varepsilon'$ for each $E\in \widehat{\fff}$ and for each comparable, not equal $E,F$, $|x_E^*(x_F)|< \min\{\varepsilon_{|E|}, \varepsilon_{|F|}\}$.  

Note that this example is also true without the assumption that $K$ is norm separable as long as $X$ does not contain a copy of $\ell_1$.  This is because norm separability was used here to deduce that if $(x_n)\subset B_X$, we can pass to a sequence which is pointwise convergent on $K$.  If $\ell_1$ does not embed into $X$, we can use Rosenthal's $\ell_1$ theorem to pass to a weakly Cauchy subsequence of $(x_n)$, and the rest of the argument goes through unchanged.

 \end{example}

The pruning method defined above is a ``bottom up'' pruning, since it begins at the root of the tree.  We will also want to use a ``top down'' pruning which begins with the leaves of the tree.  

\begin{lemma} Let $K, L$ be compact metric spaces, $\fff$ a regular family, and $k_0:MAX(\fff)\to K$, $l_0:MAX(\fff)\to L$ be any functions.  Then there exist functions $k:\fff\to K$ and $l:\fff\to L$ extending $k_0$ and $l_0$, respectively, and a pruning $\phi:\fff\to \fff$ so that $k\circ \phi, l\circ \phi$ are continuous.  

\label{second pruning lemma}

\end{lemma}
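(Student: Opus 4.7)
The plan is to reduce to a single compact metric space $M := K\times L$ with target function $h_0 := (k_0, l_0) : MAX(\fff) \to M$, and to prove the lemma for one function $h_0$ into one compact metric space. We then proceed by transfinite induction on $\iota(\fff)$. The base case $\iota(\fff)=0$ gives $\fff=\{\varnothing\}$, and we take $h=h_0$ with $\phi$ the identity.

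For the inductive step with $\iota(\fff)>0$, Proposition \ref{regular facts} (vii) yields $\iota(\fff(n))<\iota(\fff)$ for every $n$ with $(n)\in\fff$. Apply the inductive hypothesis to each $\fff(n)$ with the input function $F\mapsto h_0(n\verb!^!F)$ on $MAX(\fff(n))$ to obtain a pruning $\phi^n:\fff(n)\to\fff(n)$ and an extension $h^n:\fff(n)\to M$ with $h^n\circ\phi^n$ continuous.  Since $h^n\circ\phi^n$ is continuous at $\varnothing\in\fff(n)$, for each $n$ we can fix $N_n$ so that every $G\in\fff(n)$ with $\min G\geqslant N_n$ satisfies $d(h^n(\phi^n(G)), h^n(\varnothing))<1/n$. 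Letting $S_n:=\max(0, N_n-n-1)$, the map $\rho_n(H)=H+S_n$ (with $\rho_n(\varnothing)=\varnothing$) is a pruning of $\fff(n)$ by spreading, and $\tilde{\phi}^n:=\phi^n\circ\rho_n$ is a pruning for which $d(h^n(\tilde{\phi}^n(H)),h^n(\varnothing))<1/n$ holds uniformly for all $\varnothing\neq H\in\fff(n)$.

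Using compactness of $M$, extract a strictly increasing sequence $(n_j)_{j\geqslant s(\varnothing)}$ in $\{m:(m)\in\fff\}$ so that $h^{n_j}(\varnothing)\to h_*$ for some $h_*\in M$. For each $m$, let $\pi_m:\fff(m)\to\fff(n_m)$ be the shift by the nonnegative integer $n_m-m$, which lands in $\fff(n_m)$ by spreading.  Define $\phi:\fff\to\fff$ by $\phi(\varnothing)=\varnothing$ and
\[
\phi(m\verb!^!G) = n_m\verb!^!\tilde{\phi}^{n_m}(\pi_m(G))
\]
for $m\geqslant s(\varnothing)$ and $G\in\fff(m)$. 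A routine check shows $\phi$ is a pruning: for each $E\in\fff'$, the required $\psi_E$ decomposes as a composition of strictly increasing shifts with the strictly increasing functions witnessing the pruning structure of $\phi^{n_m}$. Define $h:\fff\to M$ by $h(\varnothing)=h_*$ and $h(m\verb!^!G)=h^m(G)$; since each $h^m$ extends $h_0\circ(m\verb!^!\cdot)$, $h$ extends $h_0$.

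Continuity of $h\circ\phi$ at a non-root $F=m\verb!^!G$ follows from the continuity of $h^{n_m}\circ\tilde{\phi}^{n_m}$ at $\pi_m(G)$, since sequences $F_j\to F$ in $\fff$ translate via $\pi_m$ to sequences converging to $\pi_m(G)$ in $\fff(n_m)$. The main obstacle is continuity at $\varnothing$: for $F_j = m_j\verb!^!G_j\to\varnothing$ (so $m_j\to\infty$) one has $(h\circ\phi)(F_j) = h^{n_{m_j}}(\tilde{\phi}^{n_{m_j}}(\pi_{m_j}(G_j)))$, and the uniform depth-shift bound $d(h^{n_{m_j}}(\tilde{\phi}^{n_{m_j}}(\pi_{m_j}(G_j))), h^{n_{m_j}}(\varnothing))<1/n_{m_j}$ combines with $h^{n_{m_j}}(\varnothing)\to h_*$ to yield $(h\circ\phi)(F_j)\to h_*$. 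The crux of the argument is this dual diagonalization: compactness supplies convergence of the root values $h^{n_j}(\varnothing)$, while the pre-shift $\rho_n$ forces uniform closeness of $h^n\circ\tilde{\phi}^n$ to $h^n(\varnothing)$ off the root, and neither ingredient alone suffices.
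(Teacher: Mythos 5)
Your proof is correct, but it organizes the induction in a genuinely different way from the paper. The paper works \emph{top-down} over the Cantor--Bendixson rank of individual nodes: it defines $k(E), l(E)$ and the pruning maps $\psi_E$ for $E\in MAX(\fff^\zeta)$ by transfinite induction on $\zeta$ from $0$ (the leaves, where $k_0, l_0$ are given) up to $\iota(\fff)$ (the root), using compactness of $K$ and $L$ at each node to extract a convergent subsequence among the immediate successors. This produces a \emph{convergent} tree, after which the paper invokes its earlier pruning example (a second, bottom-up pruning) to upgrade convergence to continuity. You instead induct on $\iota(\fff)$, splitting at the root into the subtrees $\fff(n)$ (each of strictly smaller $\iota$ by Proposition \ref{regular facts}(vii)), apply the inductive hypothesis to each, and then glue. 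The novelty in your version is that you do not separately pass from a convergent tree to a continuous one: you bake continuity at $\varnothing$ into the construction by the pre-shift $\rho_n$, which forces the uniform estimate $d(h^n\circ\tilde\phi^n(H), h^n(\varnothing))<1/n$ for all nonempty $H\in\fff(n)$, and then combine it with compactness of $M=K\times L$ to diagonalize the root values $h^{n_j}(\varnothing)\to h_*$. This is a real restructuring, not merely a cosmetic variant: the paper's induction variable is the CB rank of a node inside one fixed tree, yours is the height of the whole tree, and the two bottleneck steps are handled by different mechanisms (a separate ``convergent $\to$ continuous'' pruning versus the $\rho_n$-shift plus diagonalization). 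Both rely on compactness in essentially the same way, and both use that $\iota(\fff(n))<\iota(\fff)$. Your reduction to the single compact space $M=K\times L$ is a harmless simplification; the paper's proof is doing the same thing implicitly by choosing one subsequence along which both $k$ and $l$ converge. One small imprecision: $\iota(\fff)=0$ allows $\fff=\varnothing$ as well as $\fff=\{\varnothing\}$, though the former is degenerate and trivially handled.
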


\begin{proof} Recall that for each $E\in \fff'$, we let $s(E)=\min \{n\in \nn: E\verb!^!n\in \fff\}$.   We will define $k(E), l(E)$ for  $E\in MAX(\fff^\zeta)$ by induction on $\zeta$ for $0\leqslant \zeta\leqslant \iota(\fff)$ and $\psi_E:[s(E), \infty)\to [s(E), \infty)$ for $E\in MAX(\fff^\zeta)$ by induction on $\zeta$ for $0<\zeta\leqslant \iota(\fff)$.  Then for $E=(k_1, \ldots, k_n)$, we let $\phi(E)=(\psi_{E|_{i-1}}(k_i))_{i=1}^{|E|}$ so that the resulting tree is convergent.  A second pruning as in the example above will yield a continuous tree.    

For $\xi=0$, we set $k(E)=k_0(E)$, $l(E)=l_0(E)$.  

Next, suppose that for some $\xi$ with $\xi+1\leqslant \iota(\fff)$, $k(E), l(E)$ have been defined for each $E\in \underset{0\leqslant \zeta\leqslant \xi}{\cup}MAX(\fff^\zeta)$ and $\psi_E$ has been defined for each $E\in \underset{1\leqslant \zeta\leqslant \xi}{\cup}MAX(\fff^\zeta)$. Choose $E\in MAX(\fff^{\xi+1})$.  By compactness, we can choose a set $(m^E_n)\in [[s(E), \infty)]$ so that $(k({E\verb!^!m^E_n})), (l({E\verb!^!m^E_n}))$ converge to some $k(E)\in K$, $l(E)\in L$, respectively.  Let $\psi_E(s(E)+n)=m^E_{n+1}$ for $n=0,1,\ldots$.    

Last, suppose that for some limit ordinal $\xi\leqslant \iota(\fff)$, $k(E), l(E)$ have been defined for each $E\in \underset{0\leqslant \zeta<\xi}{\cup}MAX(\fff^\zeta)$ and $\psi_E$ has been defined for each $E\in \underset{1\leqslant \zeta<\xi}{\cup}MAX(\fff^\zeta)$.  The steps in this case are the same as in the successor case.

\end{proof}

\section{Coloring theorems for regular trees} 

If $\xi<\omega_1$ is an ordinal, there exist $k\in \nn$, non-negative integers $n_1, \ldots, n_k$, and $\omega_1>\alpha_1>\ldots >\alpha_k$ so that $$\xi=\omega^{\alpha_1}n_1+\ldots + \omega^{\alpha_k}n_k.$$  If $\xi>0$, there is a unique representation of this form so that each $n_i$ is non-zero.  This is called the \emph{Cantor normal form} of $\xi$.  Let $\xi, \zeta$ be two countable ordinals and $\alpha_1>\ldots>\alpha_k$, $n_i,m_i$ non-negative integers so that $$\xi=\omega^{\alpha_1}m_1+\ldots+\omega^{\alpha_k}m_k$$ and $$\zeta=\omega^{\alpha_1}n_1+\ldots + \omega^{\alpha_k}n_k.$$ By allowing $m_i$ or $n_i$ to be zero, we can assume that the same ordinals $\alpha_i$ are used in the representations of both.  Then we define the \emph{Hessenberg} (or \emph{natural}) \emph{sum} of $\xi$ and $\zeta$ by $$\xi\oplus \zeta = \omega^{\alpha_1}(m_1+n_1)+\ldots + \omega^{\alpha_k}(m_k+n_k).$$  Note that including extra zero terms does not change the value of this sum.  We also note that for each $\xi<\omega_1$, $\{(\alpha, \beta): \alpha\oplus \beta= \xi\}$ is finite.  This sum is not continuous, since $n\oplus n= 2n \to \omega$, while $\omega\oplus \omega = \omega 2$.  But for each $\eta<\omega_1$ and each pair of sequences $(\xi_n), (\zeta_n)$,  $$\sup_n \xi_n\oplus \zeta_n= \omega^\eta \Rightarrow (\sup_n \xi_n)\vee (\sup_n \zeta_n)=\omega^\eta.$$  This is because for natural numbers $n_1, \ldots, n_k$,  $$\omega^\eta > \omega^{\alpha_1}n_1+\ldots + \omega^{\alpha_k}n_k$$ if and only if $\eta>\alpha_1$.  Therefore if $\xi=\sup_n \xi_n, \zeta=\sup_n \zeta_n<\omega^\eta$, $\sup_n \xi_n\oplus \zeta_n \leqslant \xi\oplus \zeta<\omega^\xi.$  Moreover, suppose that $\zeta_m\oplus \eta_m\nearrow \xi$ for a limit ordinal $\xi$.  We can write $$\xi=\omega^{\alpha_1}n_1+\ldots + \omega^{\alpha_k}(n_k+1)$$ for $n_i\geqslant 0$, where $\alpha_k>0$.  Let $\alpha=\omega^{\alpha_1}r_1+\ldots + \omega^{\alpha_k}r_k$ and $\beta=\omega^{\alpha_k}$.  By passing to a subsequence, assume that $\zeta_m\oplus \eta_m=\alpha+\beta_m>\alpha$ for each $m\in \nn$ and note that $\beta_m\nearrow \beta$.  Then for each $m\in \nn$, there exist $s_{1, m}, \ldots, s_{k,m}, t_{1,m}, \ldots, t_{k,m}\geqslant 0$ with $s_{i,m}+t_{i,m}=r_i$ for each $1\leqslant i\leqslant k$ and $\zeta_m', \eta_m'$ so that $\zeta_m'\oplus \eta_m'=\beta_m$, $\zeta_m=\omega^{\alpha_1}s_{1,m}+\ldots + \omega^{\alpha_k}s_{k,m}+\zeta_m'$ and $\eta_m=\omega^{\alpha_1}t_{1,m}+\ldots+\omega^{\alpha_k}t_{1,k}+\eta_m'$.  By our above remarks, either $\zeta_m'\nearrow \beta$ or $\eta_m\nearrow \beta$.  Assume that $\zeta_m'\nearrow \beta$.  By passing to a further subsequence, we can assume that there exist $s_1, \ldots, s_k, t_1, \ldots, t_k$ so that for each $m\in \nn$ and $1\leqslant i\leqslant k$, $s_{i,m}=s_i$ and $t_{i,m}=t_i$.  In this case, with $\zeta''=\omega^{\alpha_1}s_1+\ldots + \omega^{\alpha_k}s_k$ and $\eta''=\omega^{\alpha_1}t_1+\ldots+\omega^{\alpha_k}t_k$, $\zeta_m=\zeta''+ \zeta_m'\nearrow \zeta''+\beta$, $\eta_m\geqslant \eta''$, and $(\zeta''+\beta)\oplus \eta'' = \xi$.  We will use this observation in the limit ordinal case of the proof of our next lemma.

If we give each member of a set $S$ of cardinality $n$ at least one of the two colors $0$ and $1$, of course we can find numbers $i,j$ so that $i+j=n$ and subsets $A,B$ of $S$ with cardinality $i,j$, respectively, so that each member of $A$ gets color $0$, and each member of $B$ gets color $1$.  We wish to generalize this to colorings of regular families, in which case the analogous result, where addition is the Hessenberg sum, is true for colorings of regular families. Here, we consider the case in which each member of $MAX(\fff)$ colors each of its non-empty prececessors with at least one, but possibly both, of the colors $0,1$. If $\fff$ is a regular family, we say a collection $(\mathcal{A}^0_E, \mathcal{A}^1_E)_{E\in \widehat{\fff}}$ of subsets of $MAX(\fff)$ is a \emph{coloring of} $\fff$ if for each $E\in \widehat{\fff}$, $\aaa^0_E\cup \aaa^1_E=\{F\in MAX(\fff): E\preceq F\}$.  

For the sake of simplifying the following proof, we introduce the follwing terminology.  Given regular families $\fff, \mathcal{G}$, we say the pair $(i,e)$ is an \emph{extended embedding} of $\fff$ into $\mathcal{G}$ if $i:\widehat{\fff}\to \widehat{\mathcal{G}}$ is an embedding and $e:MAX(\mathcal{F})\to MAX(\mathcal{G})$ is a function so that for each $E\in MAX(\fff)$, $i(E)\preceq e(E)$.  If $(\aaa^0_E, \aaa^1_E)_{E\in \widehat{\mathcal{G}}}$ is a coloring of $\mathcal{G}$ and $(i,e)$ is an extended embedding of $\fff$ into $\mathcal{G}$, we define for $j=0,1$ and $E\in \widehat{\fff}$ the set $$\mathcal{B}^j_E=\{F\in MAX(\fff): e(F)\in \mathcal{A}^j_{i(E)}\}.$$  We refer to $(\mathcal{B}^0_E, \mathcal{B}^1_E)$ as the \emph{induced coloring of} $\fff$ by $(i,e)$ and $(\mathcal{A}^0_E, \aaa^1_E)$, or, if no confusion can arise, simply the induced coloring.  It is easy to see that this is indeed a coloring of $\fff$.  We say that the induced coloring $(\bbb^0_E, \bbb^1_E)$ is \emph{monochromatically} $j$ provided that for each $E\in MAX(\fff)$, $$e(E)\in \bigcap_{k=1}^{|E|} \mathcal{A}^j_{i(E|_k)}.$$  We observe that if $\eee, \fff, \mathcal{G}$ are regular families, $(\aaa^0_E, \aaa^1_E)_{E\in \widehat{\mathcal{G}}}$ is a coloring of $\mathcal{G}$, $(i,e)$ is any extended embedding of $\eee$ into $\fff$, and if $(i',e')$ is an extended embedding of $\fff$ into $\mathcal{G}$ so that the induced coloring of $\fff$ by $(i',e')$ and $(\mathcal{A}^0_E, \mathcal{A}^1_E)$ is monochromatically $j$, then $(i'\circ i, e'\circ e)$ is an extended embedding of $\eee$ into $\mathcal{G}$ so that the induced coloring of $\eee$ by $(i'\circ i, e'\circ e)$ and $(\aaa^0_E, \aaa^1_E)$ is monochromatically $j$.

\begin{lemma}[Coloring lemma for sums]   Supppose $\fff$ is a regular family with $\iota(\fff)>0$.  If $(\aaa^0_E, \aaa^1_E)$ is a coloring of $\fff$, then for $j=0,1$, there exist an ordinal $\xi_j$ and an extended embedding $(i_j, e_j)$ of $\fff_{\xi_j}$ into $\fff$ so that the induced coloring of $\fff_{\xi_j}$ is monochromatically $j$ and so that $\xi_0\oplus \xi_1=\iota(\fff)$.      

\label{coloring lemma for sums}

\end{lemma}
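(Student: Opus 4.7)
My plan is to induct on $\xi := \iota(\fff) \geq 1$. For the base case $\xi = 1$, $\fff$ consists of $\{\varnothing\}$ together with cofinitely many singletons $(m)$ (by spreading); each such $(m)$ lies in $\aaa^0_{(m)}$ or $\aaa^1_{(m)}$, so pigeonhole yields a cofinal $A \subset \nn$ along which the color is a fixed $j^*\in \{0,1\}$. Enumerating $A$ as $\{m_1 < m_2 < \ldots\}$ and setting $(k) \mapsto (m_k)$ produces the mono-$j^*$ embedding of $\fff_1$; the mono-$(1-j^*)$ embedding of $\fff_0 = \{\varnothing\}$ is trivial. The decomposition $1 \oplus 0 = 1$ matches $\iota(\fff)$.

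For the successor case $\xi = \eta + 1$, I know $\iota(\fff(n)) = \eta$ for all $n$ past some threshold (from the proof of Proposition \ref{regular facts}(vii)). For each such $n$ I apply Theorem \ref{PR} to the partition $(\{G : (n)\verb!^!G \in \aaa^0_{(n)}\}, \{G : (n)\verb!^!G \in \aaa^1_{(n)}\})$ of $MAX(\fff(n))$, obtaining $N^n \in \infin$ and $c^n \in \{0,1\}$ so that $(n)\verb!^!G \in \aaa^{c^n}_{(n)}$ for every $G \in MAX(\fff(n)) \cap [N^n]^{<\omega}$. Next I apply the inductive hypothesis to $(N^n)^{-1}(\fff(n))$, which by Proposition \ref{regular facts}(iv) is regular of index $\eta$, using the pulled-back coloring $\tilde{\mathcal{C}}^{n,j}_E := \{H : \psi_H \in \aaa^j_{(n)\verb!^!N^n(E)}\}$, where $\psi_H$ denotes a fixed max extension of $(n)\verb!^!N^n(H)$ in $MAX(\fff) \cap [\{n\} \cup N^n]^{<\omega}$ (such extensions exist by spreading and compactness). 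Pushing the resulting mono-$j$ embeddings $(\tilde i^n_j, \tilde e^n_j)$ forward to $\fff$ via $\hat{i}^n_j(E) := (n)\verb!^!N^n(\tilde{i}^n_j(E))$ and $\hat{e}^n_j(F) := \psi_{\tilde{e}^n_j(F)}$ yields mono-$j$ extended embeddings $\fff_{\alpha^n_j} \hookrightarrow \fff$ whose leaves are automatically in $\aaa^{c^n}_{(n)}$. Pigeonholing over $n$ gives a cofinal $M = \{m_1 < m_2 < \ldots\}$ on which $(\alpha^n_0, \alpha^n_1) = (\alpha_0, \alpha_1)$ and $c^n = c^*$ are constant. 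I set $\xi_{c^*} = \alpha_{c^*} + 1$, $\xi_{1-c^*} = \alpha_{1-c^*}$, noting $(\alpha_{c^*}+1) \oplus \alpha_{1-c^*} = \alpha_0 \oplus \alpha_1 + 1 = \eta+1$. The color-$c^*$ embedding is assembled by routing the $k$-th initial coordinate of $\fff_{\alpha_{c^*}+1} = (\aaa_1, \fff_{\alpha_{c^*}})$ through $(m_k)$ using $\hat{\cdot}^{m_k}_{c^*}$; the color at $(m_k)$ is $c^*$ by the PR restriction, and at deeper predecessors is $c^*$ by the inductive hypothesis. The color-$(1-c^*)$ embedding is $\hat{\cdot}^{n_0}_{1-c^*}$ for any fixed $n_0 \in M$; since its image consists of sequences of length $\geq 2$, the vertex $(n_0)$ itself is not in the image and no coloring condition at $(n_0)$ must be checked.

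For the limit case, $\iota(\fff(n)) \nearrow \xi$. Applying the inductive hypothesis directly to each $\fff(n)$ (with the shifted coloring $\{G : (n)\verb!^!G \in \aaa^j_{(n)\verb!^!E}\}$) and prefixing by $(n)$ produces, for each $n$, mono-$j$ extended embeddings of $\fff_{\alpha^n_j}$ into $\fff$ with $\alpha^n_0 \oplus \alpha^n_1 = \iota(\fff(n))$; no PR is needed here since the image of each such embedding lies in sequences of length $\geq 2$. By the Hessenberg-sum analysis in the paragraph preceding the lemma, I pass to a subsequence $(n_k)$ and choose $\xi_0 \oplus \xi_1 = \xi$ so that, after possibly swapping labels, $\alpha^{n_k}_0 \nearrow \xi_0$ and $\alpha^{n_k}_1 \geq \xi_1$. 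The color-1 embedding (and, when $\xi_0$ is attained, the color-0 embedding) is handled by a single $n_k$: Proposition \ref{regular facts}(vii) yields an extended embedding $\fff_{\xi_j} \hookrightarrow \fff_{\alpha^{n_k}_j}$, which composes with the mono-$j$ embedding prepared above. When $\xi_0$ is a strict limit I use the fine Schreier structure $\fff_{\xi_0} = \mathcal{D}(\fff_{\xi_0^p})$ and diagonalize: for each $p$ pick $k_p$ so that $\alpha^{n_{k_p}}_0 \geq \xi_0^p$, and for $E \in \widehat{\fff_{\xi_0}}$ with $\min E = p$ route $E$ through the mono-0 embedding based at $(n_{k_p})$. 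The $p$-branches live in disjoint subtrees of $\fff$ (starting with different $n_{k_p}$'s), so consistency and injectivity are automatic.

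The main obstacle will be the successor-case bookkeeping, specifically the verification that the push-forward via $\hat{e}^n_j(F) := \psi_{\tilde{e}^n_j(F)}$ preserves the monochromatic-$j$ condition at every inner vertex. This reduces to the observation that for $k \leq |F|$ one has $\tilde{i}^n_j(F|_k) \preceq \tilde{e}^n_j(F)$ inside $(N^n)^{-1}(\fff(n))$, so pushing forward places $(n)\verb!^!N^n(\tilde{i}^n_j(F|_k))$ as a predecessor of $\psi_{\tilde{e}^n_j(F)}$ in $\fff$, and the mono-$j$ condition reads exactly as the pulled-back mono condition supplied by the inductive hypothesis. Once this setup is secured, the combination over $M$ in the successor case and the diagonalization in the limit case are routine consequences of the fine Schreier construction.
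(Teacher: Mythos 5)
Your proposal is correct and takes essentially the same route as the paper's proof: induction on $\iota(\fff)$, with the successor step handled by applying Theorem \ref{PR} to the root coloring of each $\fff(n)$, composing extended embeddings from the inductive hypothesis with max-extension maps, stabilizing the ordinal pair and the PR color by pigeonhole, and gaining the extra $+1$ by routing the first coordinate through the $(m_k)$; the limit step uses the same Hessenberg-sum subsequence analysis and the same diagonalization over a presentation $\fff_{\xi_0}=\mathcal{D}(\fff_{\xi_0^p})$, with the other color handled at a single index. The only deviations (working in $(N^n)^{-1}(\fff(n))$ with a pulled-back coloring rather than on $\fff(n)$ itself, an explicit base case, and keeping the color $c^*$ unlabeled) are cosmetic.
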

 
Here, it should be understood that if either $\xi_j=0$ for $j=0$ or $1$, we consider taking $i_j$ and $e_j$ to be the empty maps to satisfy the conclusion for that $j$. 

\begin{proof} We prove the result by induction on $\iota(\fff)$.  Fix $0\leqslant \xi<\omega_1$, and in the case that $\xi>0$ assume the result holds for all families $\fff$ with $\iota(\fff)\leqslant \xi$ and all colorings $(\aaa^0_E, \aaa^1_E)$ of $\fff$.  Fix a regular family $\fff$ with $\iota(\fff)=\xi+1$ and a coloring $(\aaa^0_E, \aaa^1_E)$ of $\fff$.   There exists $n_0\in \nn$ so that for all $n\geqslant n_0$, $\iota(\fff(n))=\xi$.  For each $n\geqslant n_0$, each $E\in \fff(n)$, and $j\in \{0,1\}$, let $$\aaa^j_E(n)=\{F\in MAX(\fff(n)): n\verb!^!F\in \aaa^j_{n\verb!^!E}\}.$$  This defines a coloring of $\fff(n)$, and in fact is the induced coloring on $\fff(n)$ corresponding to the extended embedding $E\mapsto n\verb!^!E$.  Note that for each $n\geqslant n_0$, $\aaa^0_\varnothing(n) \cup \aaa^1_\varnothing(n)=MAX(\fff(n))$.  By Theorem \ref{PR}, there exists $M_n\in \infin$ so that either $$ MAX(\fff(n))\cap [M_n]^{<\omega}\subset \aaa^0_\varnothing(n) \text{\ \ or\ \ }MAX(\fff(n))\cap [M_n]^{<\omega}\subset \aaa^1_\varnothing(n).$$  Without loss of generality, we can find $n_0\leqslant N\in \infin$ so that for each $n\in \nn$, $MAX(\fff(n))\cap [M_n]^{<\omega}\subset \aaa^0_\varnothing(n)$.  Next, for each $n\in N$, choose a function $$f_n:MAX(\fff(n))\to MAX(\fff(n))\cap [M_n]^{<\omega}$$ so that for each $F\in MAX(\fff(n))$, $M_n(F)\preceq f_n(F)$.  We can do this because $\fff(n)$ is regular, which means any member of $\fff(n)\cap [M_n]^{<\omega}$ has an extension in $MAX(\fff(n))\cap [M_n]^{<\omega}$.  Let $(\bbb^0_E(n), \bbb^1_E(n))$ be the coloring on $\fff(n)$ given by $$\bbb^j_E(n)=\{F\in MAX(\fff(n)): f_n(F)\in \mathcal{A}^j_{M_n(E)}(n)\}.$$  It is easy to check that this is indeed a coloring. In fact, this is the induced coloring corresponding to the extended embedding of $\fff(n)$ into itself given by $E\mapsto M_n(E)$ and for $E\in MAX(\fff(n))$, $E\mapsto f_n(E)$.   Now apply the inductive hypothesis to find some $\xi_{0,n}, \xi_{1,n}$ with $\xi_{0,n}\oplus \xi_{1,n}=\iota(\fff(n))=\xi$ and an extended embedding $(i_{j,n}, e_{j,n})$ of $\fff_{\xi_{j,n}}$ into $\fff(n)$ which is monochromatically $j$ with respect to the coloring $(\bbb^0_E(n), \bbb^1_E(n))$.  By passing to a subsequence, we can assume that we have some $n_0\leqslant N\in \infin$ and some $\xi_0, \xi_1$ so that for each $n\in N$, $\xi_{0,n}=\xi_0$ and $\xi_{1,n}=\xi_1$.  By our remarks concerning composing extended embeddings with extended embeddings inducing a monochromatic coloring, for $n\in N$ and $j=0$ or $1$, $$i'_{j,n}(E)=M_n(i_{j,n}(E)), \text{\ \ \ \ \ }e'_{j,n}(E)=f_n(e_{j,n})$$ defines an extended embedding of $\fff_{\xi_j}$ into $\fff(n)$ so that the induced coloring on $\fff_{\xi_j}$ by $(\aaa^0_E(n), \aaa^1_E(n))$ is monochromatically $j$. For convenience, set $i'_{0,n}(\varnothing)=\varnothing$ and let $e'_{0,n}(\varnothing)=\varnothing$ if $\varnothing \in MAX(\fff(n))$.  Define $i_0:\widehat{\fff}_{\xi_0+1}\to \widehat{\fff}$, $e_0:MAX(\fff_{\xi_0+1})\to MAX(\fff)$, $i_1:\widehat{\fff}_{\xi_1}\to \widehat{\fff}$ and $e_1:MAX(\fff_{\xi_1})\to MAX(\fff)$ by $$i_0(k\verb!^! E)= n_k\verb!^! i'_{0, n_k}(E), \hspace{5mm} e_0(k\verb!^!E) = n_k\verb!^!e'_{0,n_k}(E),$$ $$i_1(E)=n_1\verb!^!i'_{1, n_1}(E), \hspace{5mm} e_1(E)=n_1\verb!^! e'_{1, n_1}(E),$$  where $N=(n_k)$.  Again, using our remarks about compositions of extended embeddings, the coloring induced by $(i_1, e_1)$ is monochromatically $1$ with respect to $(\aaa^0_E, \aaa^1_E)$.  To see that the coloring induced by $(i_0, e_0)$ is monochromatically $0$, fix $F\in MAX(\fff_{\xi_0+1})$.  Write $F=k\verb!^!E$.  By our choices and the definition of $(\aaa^0_G(n_k))_G$,  $$e_0(F)=n_k\verb!^!e'_{0, n_k}(E)\in \bigcap_{i=1}^{|E|}\aaa^0_{n_k\verb!^!i'_{0,n_k}(E|_i)}=\bigcap_{i=2}^{|F|} \aaa^0_{i_0(F|_i)}. $$ But by our choices, $e_0(F)\in MAX(\fff_{n_k})\cap [M_{n_k}]^{<\omega}\subset \aaa_{(n_k)}^0$, so $$e_0(F)\in \bigcap_{i=1}^{|F|} \aaa^0_{i_0(F|_i)}.$$  Thus the coloring on $\fff_{\xi_0+1}$ induced by $(i_0, e_0)$ and $(\aaa^0_E, \aaa^1_E)$ is monochromatically $0$.   Since $(\xi_0+1)\oplus \xi_1= \xi_0\oplus \xi_1 + 1 = \xi+1$, this finishes the $\xi+1$ case.

Suppose $\xi$ is a limit ordinal and that the result holds for every coloring of every regular family with $\iota$ index less than $\xi$.  Fix $\fff$ with $\iota(\fff)=\xi$ and a coloring $(\aaa^0_E, \aaa^1_E)$ of $\fff$.  Fix $n_0\in \nn$ so that $(n_0)\in \fff$.  For such $n$, define the coloring $(\aaa^0_E(n), \aaa^1_E(n))$ as was done in the successor case.  Recall that $\iota(\fff(n))\nearrow \xi$.  For each $n\geqslant n_0$, choose $\xi_{j,n}$ so that $\xi_{0,n}\oplus \xi_{1,n}=\iota(\fff(n))$ and extended embeddings $(i_{j,n}, e_{j,n})$ of $\fff_{\xi_j}$ into $\fff(n)$ so that the induced coloring is monochromatically $j$.  Recall by our separation technique that we can pass to a subsequence $N=(n_k)\in \infin$, find ordinals $\alpha, \beta, \beta_k, \gamma$, and find $j\in \{0,1\}$ (which we assume without loss of generality is equal to $0$) so that \begin{enumerate}[(i)]\item $ \xi_{0,n_k}=\alpha+\beta_k$, \item $\beta_k\nearrow \beta$, \item $\beta$ is a limit ordinal, \item $(\alpha+\beta)\oplus \gamma = \xi$, \item $\gamma\leqslant \xi_{1, n_k}$ for all $k\in \nn$. \end{enumerate}  Fix $\zeta_k\uparrow \alpha+\beta$ so that $\fff_{\alpha+\beta}=\mathcal{D}(\fff_{\zeta_k})$ and so that $\fff_{\zeta_k}\subset \fff_{\zeta_{k+1}}$ for all $k\in \nn$.  By passing to a further subsequence of $N$, we can assume without loss of generality that for all $k\in \nn$, $\zeta_k\leqslant \alpha+\beta_k$.  Choose an extended embedding $(i',e')$ of $\fff_\gamma$ into $\fff_{\xi_{1,n_1}}$ and, for each $k\in \nn$, an extended embedding $(i'_k, e'_k)$ of $\fff_{\zeta_k}$ into $\fff_{\alpha+\beta_k}=\fff_{\xi_{0, n_k}}$.  We define extended embeddings $(i_0, e_0)$ and $(i_1, e_1)$ of $\fff_{\alpha+\beta}$ and $\fff_\gamma$, respectively, into $\fff$ so that the coloring induced by $(i_j, e_j)$ is monochromatically $j$ by $$i_1(E)=  n_1\verb!^!(i_{1,n_1} \circ i')(E), \hspace{5mm} e_1(E)=n_1\verb!^!( e_{1,n_1}\circ e')(E)$$ and, if $E\in \widehat{\fff}_{\alpha+\beta}$ with $k=\min E$, $$i_0(E)=n_k\verb!^! (i_{0, n_k}\circ i'_k)(E), \hspace{5mm} e_0(E)=n_k\verb!^!(e_{0, n_k}\circ e'_k)(E).$$

\end{proof}

\begin{lemma}[Coloring lemma for products] Let $\fff, \mathcal{G}$ be regular families.  Suppose $f:C(\mathcal{F}[\mathcal{G}])\to \{0,1\}$ is a function such that for any embedding $j:\mathcal{G}\to \fff[\mathcal{G}]$, there exists $c\in C(j(\mathcal{G}))$ with $f(c)=0$.  Then there exists an order-preserving $j: \widehat{\fff}\to C(\fff[\mathcal{G}])$ so that $f\circ j\equiv 0$.  
\label{coloring products}

\label{Coloring theorem for products}
\end{lemma}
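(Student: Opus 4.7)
The plan is to proceed by transfinite induction on $\iota(\fff)$, with $\mathcal{G}$ fixed throughout. For the base case $\iota(\fff)=1$, the set $\widehat{\fff}$ consists of pairwise $\prec$-incomparable singletons and order-preservation is vacuous; applying the hypothesis to the ``identity-like'' embedding of $\mathcal{G}$ into $\fff[\mathcal{G}]$ which sends $E\in\mathcal{G}$ to itself viewed as a length-one standard decomposition (valid once $\min E$ is large enough that $(\min E)\in\fff$) yields a $0$-chain, which can be assigned to each element of $\widehat{\fff}$.

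For the successor case $\iota(\fff)=\xi+1$, Proposition \ref{regular facts}(vii) supplies $n_0$ with $\iota(\fff(n))=\xi$ for all $n\geqslant n_0$. For each such $n$, I would fix $G_n\in\mathcal{G}$ with $\min G_n=n$ and consider the embedding $e_n:\fff(n)[\mathcal{G}]\hookrightarrow\fff[\mathcal{G}]$ given by $E\mapsto G_n\verb!^!E$, which is well-defined and preserves the $\mathcal{G}$-block standard decomposition once we restrict to a tail of $\mathcal{G}$ (and the restriction has the same $\iota$-index, again by Proposition \ref{regular facts}). The pulled-back coloring $f_n=f\circ e_n$ on $C(\fff(n)[\mathcal{G}])$ then satisfies the hypothesis of the lemma for $\fff(n)$ and $\mathcal{G}$: given an embedding $j':\mathcal{G}\to\fff(n)[\mathcal{G}]$, the composition $e_n\circ j'$ is an embedding $\mathcal{G}\to\fff[\mathcal{G}]$, so the original hypothesis supplies a $0$-chain in its image, which pulls back through $e_n$ to a $0$-chain in $j'(\mathcal{G})$ with respect to $f_n$. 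The inductive hypothesis then produces an order-preserving $j_n:\widehat{\fff(n)}\to C(\fff(n)[\mathcal{G}])$ with $f_n\circ j_n\equiv 0$. I assemble $j:\widehat{\fff}\to C(\fff[\mathcal{G}])$ by setting $j((n))$ to a $0$-chain terminating at $G_n$, and $j(n\verb!^!E)=e_n(j_n(E))$ for $E\in\widehat{\fff(n)}$; order-preservation across the two levels is immediate since every element of $j(n\verb!^!E)$ properly extends $G_n$. The limit case $\iota(\fff)=\xi$ proceeds identically at the first level, with the inductive hypothesis applied to each $\fff(n)$ individually for $n$ with $(n)\in\fff$; no diagonalization is required because the assembly is only over the choice of first coordinate $(n)\in\widehat{\fff}$.

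The principal obstacle is twofold. First, verifying that $f_n$ satisfies the hypothesis for $\fff(n)$ requires that $e_n$ preserves the $\mathcal{G}$-block standard decomposition structure as well as the tree order, so that embedded copies of $\mathcal{G}$ in $\fff(n)[\mathcal{G}]$ transfer to honest embedded copies in $\fff[\mathcal{G}]$; any looser notion of embedding would break the reduction. Second, arranging $j((n))$ to be a $0$-chain \emph{ending at} the specific element $G_n$ (needed so that the embedding $e_n$ picks up where $j((n))$ leaves off) requires an extra application of the hypothesis to an auxiliary embedding chosen to force the desired terminal element, and may require either strengthening the inductive statement to produce $0$-chains with prescribed last element or using the Pelczar--Rosendal theorem (Theorem \ref{PR}) to prune the family of candidate $G_n$ to one on which this termination can be arranged uniformly.
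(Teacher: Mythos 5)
Your overall scheme (induction on $\iota(\fff)$, reducing $\fff$ to the families $\fff(n)$ by concatenating a first $\mathcal{G}$-block) can be made to work, and once unwound it is essentially the paper's recursion; but the step you yourself flag as the principal obstacle is a genuine gap, and neither of the fixes you propose for it would succeed. You fix $G_n\in\mathcal{G}$ with $\min G_n=n$ \emph{in advance} and then need a $0$-chain terminating at that particular $G_n$. The hypothesis cannot deliver this: it only guarantees that each embedded copy of $\mathcal{G}$ contains \emph{some} $0$-chain, and $f$ may color $1$ every chain whose maximum is your chosen $G_n$ while still admitting $0$-chains elsewhere in each embedded copy. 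For the same reason, strengthening the induction to produce $0$-chains with a prescribed last element is hopeless, and Theorem \ref{PR} (which is Pudl\'{a}k--R\"{o}dl, incidentally) is not relevant, since the obstruction is not a partition problem on $MAX$ of a regular family. The correct move is simply to reverse the order of the choices, which is exactly what conditions (i)--(iii) in the paper's proof encode: apply the hypothesis to an embedding whose image consists of single $\mathcal{G}$-blocks appended to the stem built so far, namely $G\mapsto \bigl(\cup_{i}F_i\bigr)\verb!^!(k+m:k\in G)$ for suitably large $m$; the $0$-chain it yields automatically terminates in a single block $F\in\mathcal{G}$ whose minimum is at least $n$, so the block minima used so far form a spread of the current node of $\fff$ and the recursion can continue, with that $F$ playing the role of your $G_n$. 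Nothing allows you to dictate the terminal block beforehand; you must read it off from the chain the hypothesis gives you.

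A second, smaller gap: $e_n(E)=G_n\verb!^!E$ is only meaningful on the part of $\fff(n)[\mathcal{G}]$ consisting of sets with minimum greater than $\max G_n$, so $f_n=f\circ e_n$ is not a coloring of all of $C(\fff(n)[\mathcal{G}])$ and your inductive hypothesis (stated with $\mathcal{G}$ fixed) does not literally apply; ``restricting to a tail of $\mathcal{G}$'' replaces $\mathcal{G}$ by a different family and again falls outside the induction statement. This is repairable by building the spread into the embedding itself, $e_n(E)=G_n\verb!^!(k+\max G_n:k\in E)$: if $(E_i)_{i=1}^t\subset\mathcal{G}$ witnesses $E\in\fff(n)[\mathcal{G}]$, then the block minima of $e_n(E)$ form a spread of $n\verb!^!(\min E_i)_{i=1}^t\in\fff$, so $e_n$ is an order embedding of all of $\fff(n)[\mathcal{G}]$ into $\fff[\mathcal{G}]$ and the hypothesis transfers to $f_n$. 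With these two repairs your induction closes and coincides in substance with the paper's direct construction of $r:\widehat{\fff}\to C(\mathcal{G})$; as submitted, however, the key step producing the first-level chains is missing.
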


\begin{proof} We first recursively define $r:\widehat{\fff}\to C(\mathcal{G})$ so that for $E\in \widehat{\fff}$, if we let $F_i=\max r(E|_i)\in \mathcal{G}$ for $1\leqslant i\leqslant |E|$, \begin{enumerate}[(i)] \item $(\min F_i)_{i=1}^{|E|}$ is a spread of $E$, hence is a member of $\fff$. \item $(F_i)_{i=1}^{|E|}$ is successive, \item $f\Bigl(\bigl\{\bigl(\cup_{i=1}^{|E|-1} F_i\bigr)\verb!^! F: F\in r(E)\bigr\}\Bigr)=0.$  \end{enumerate} Then $j(E)=\bigl\{\bigl(\cup_{i=1}^{|E|-1}F_i\bigr)\verb!^! F: F\in r(E)\bigr\}$ gives the desired function.  

To perform the base step and inductive step simultaneously, we only need to demonstrate how to perform the construction on the sequence of immediate successors of any $E\in \fff'$.  Suppose that $E\in \fff'$ is such that $r(E|_i)$ has been defined for each $1\leqslant i\leqslant |E|$.  Let $F_i$ be as above.  Let $m_0>E$ be minimal such that $E\verb!^!m_0\in \fff$.  Choose $m_0\leqslant m_1\in \nn$ so that $(\min F_i)_{i=1}^{|E|}\verb!^!m_1\in \fff$. Since $(\min F_i)_{i=1}^{|E|}$ is a spread of $E$, which is non-maximal in $\fff$, such an $m_1$ exists.  If there exists $n\geqslant m_1$ so that $$f\Bigl(\Bigl\{\bigl(\cup_{i=1}^{|E|}F_i\bigr)\verb!^!F: F\in c\Bigr\}\Bigr)=1$$ for all $c\in C(\mathcal{G}\cap (n, \infty)^{\omega})$, we obtain a contradiction.  This is because in this case the embedding $j(G)=\bigl(\cup_{i=1}^{|E|}F_i\bigr)\verb!^! (k+n: k\in G)$ is such that $f|_{j(\mathcal{G})}\equiv 1$.  This is indeed an embedding by our choice of $m_1$ and the fact that $F_i\in \mathcal{G}$ for each $1\leqslant i\leqslant |E|$. We can choose chains $c_{m_0}, c_{m_0+1},\ldots$ so that for each $m\geqslant m_0$, $c_m\in C(\mathcal{G}\cap (m_1, \infty)^{<\omega})$, $\min \min c_m$ is strictly increasing with $m$, and so that $$f\Bigl(\Bigl\{\bigl(\cup_{i=1}^{|E|}F_i\bigr)\verb!^!F: F\in c_m\Bigr\}\Bigr)=0.$$ Setting $r(E\verb!^!m)=c_m$ for each $m\geqslant m_0$ is easily seen to satisfy (i)-(iii).

\end{proof}

\section{The Szlenk and $\ell_1^+$ weakly null indices}

\subsection{Definition and remarks}

Let $X$ be a Banach space and let $L\subset X^*$ be a bounded set.  For $\varepsilon>0$, we let $$s_\varepsilon(L)=\{x^*: \forall w^* \text{\ neighborhoods\ } V \text{\  of\ }x^*, \text{diam}_{\|\cdot\|}(V\cap L)> \varepsilon\}.$$   As usual, we define the transfinite derivatives $$s_\varepsilon^0(L)=L,$$ $$s_\varepsilon^{\xi+1}(L)=s_\varepsilon(s_\varepsilon^\xi(L)),$$ and if $\xi$ is a limit ordinal, $$s_\varepsilon^\xi(L) = \underset{\zeta<\xi}{\bigcap}s_\varepsilon^\zeta(L).$$  It is easy to see that if $L$ is $w^*$ compact, then for each $\xi$, $d^\xi_\varepsilon(L)$ is also $w^*$ compact.  

We define $Sz_\varepsilon(L)=\min\{\xi<\omega_1: s_\varepsilon^\xi(L)=\varnothing\}$ provided this set is non-empty, and $Sz_\varepsilon(L)=\omega_1$ otherwise.  Last, we define $Sz(L)=\sup_{\varepsilon>0}Sz_\varepsilon(L)$.  We define $Sz(X)=Sz(B_X)$.

\begin{proposition}\cite{Sz, La1} Let $X, Y$ be separable Banach spaces, and let $\varnothing \neq K\subset X^*$ be $w^*$ compact. \begin{enumerate}[(i)]\item If $X, Y$ are separable Banach spaces so that $X$ is isomorphic to a subspace of $Y$, then $Sz(X)\leqslant Sz(Y)$. \item If $X$ is a separable Banach space, $Sz(K)<\omega_1$ if and only if $K$ is norm separable. \item If $\varnothing \neq K\subset X^*$ is $w^*$ compact and convex, then either $Sz(K)=\omega_1$ or there exists $\xi<\omega_1$ so that $Sz(K)=\omega^\xi$. \item If $\varnothing \neq K$ is $w^*$ compact, convex, and not norm compact, the supremum $\sup_\varepsilon Sz_\varepsilon(K)$ is not attained. \item $Sz(K)=1$ if and only if $K$ is compact. \end{enumerate} \end{proposition}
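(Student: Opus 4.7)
The plan is to prove (i) and (v) directly from the definitions, (ii) via a slice-selection lemma, and (iii) and (iv) jointly via Lancien's convexity argument.

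For (i), let $T:X\to Y$ be an isomorphic embedding. The adjoint $T^*:Y^*\to X^*$ is $w^*$-to-$w^*$ continuous of norm $\|T\|$ and by Hahn--Banach is onto $X^*$ with $T^*(B_{Y^*})\supseteq \|T^{-1}\|^{-1}B_{X^*}$. A direct check shows $T^*(s_\varepsilon^\xi(L))\supseteq s_{\varepsilon/\|T\|}^\xi(T^*(L))$ for every $w^*$-compact $L\subseteq Y^*$ and every $\xi<\omega_1$; applied to $L=B_{Y^*}$ and combined with the rescaling $s_\varepsilon(\lambda K)=\lambda s_{\varepsilon/\lambda}(K)$, this yields $Sz_{c\varepsilon}(X)\leqslant Sz_\varepsilon(Y)$ for a constant $c$ depending on $\|T\|$ and $\|T^{-1}\|$, and taking suprema over $\varepsilon$ gives $Sz(X)\leqslant Sz(Y)$. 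For (v), $Sz(K)=1$ is equivalent to $s_\varepsilon(K)=\varnothing$ for every $\varepsilon>0$, which says the identity $(K,w^*)\to (K,\|\cdot\|)$ is continuous; since the source is $w^*$-compact and the target Hausdorff, this continuous bijection is a homeomorphism and $K$ is norm compact, while the converse is immediate since the two topologies coincide on any norm-compact set.

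For (ii), the forward direction is obtained by writing, for each $m\in\nn$,
\[ K = \bigcup_{\xi < Sz_{1/m}(K)} \bigl(s_{1/m}^\xi(K)\setminus s_{1/m}^{\xi+1}(K)\bigr), \]
where each term is a countable union of $w^*$-relatively open pieces of norm diameter at most $1/m$; selecting one point from each piece and letting $m$ vary yields a countable norm-dense subset. The converse hinges on a \emph{slice lemma}: any non-empty $w^*$-compact norm-separable $L\subseteq X^*$ admits, for each $\varepsilon>0$, a $w^*$-open $V$ with $\varnothing\neq V\cap L$ of norm diameter less than $\varepsilon$. Since $X$ is separable and $L$ is norm-bounded, $L$ is $w^*$-metrizable and compact, hence a Baire space; covering it by countably many closed norm balls of radius $\varepsilon/3$ and invoking Baire category produces one with non-empty $w^*$-interior in $L$. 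The slice lemma makes the Szlenk derivatives strictly decreasing $w^*$-compact norm-separable sets, so the derivation terminates at some countable ordinal, giving $Sz_\varepsilon(K)<\omega_1$ for each $\varepsilon$; then $Sz(K)=\sup_n Sz_{1/n}(K)<\omega_1$.

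For (iii) and (iv), the essential tool is \emph{Lancien's convexity lemma}: for every $w^*$-compact convex $K$, every $\varepsilon>0$, and every $n\in\nn$,
\[ Sz_{\varepsilon/n}(K)\geqslant Sz_\varepsilon(K)\cdot n. \]
This is proved by transfinite induction, the crux being that convexity allows a single slice of $s_\varepsilon^\alpha(K)$ of large norm diameter to be split, via convex combinations on a long chord, into $n$ nested slices each of smaller diameter, populating $s_\varepsilon^{\alpha\cdot n}(K)$. For (iii), this shows $Sz(K)$ is additively principal: given $\eta,\zeta<Sz(K)$, choose $\varepsilon$ with $\eta,\zeta<Sz_\varepsilon(K)=:\gamma$; then $\eta+\zeta<\gamma+\gamma\leqslant Sz_{\varepsilon/2}(K)\leqslant Sz(K)$, and additively principal countable ordinals are precisely the $\omega^\xi$. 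For (iv), if $K$ is not norm compact then (v) supplies some $\varepsilon_0>0$ with $Sz_{\varepsilon_0}(K)\geqslant 2$; for any $\varepsilon>0$, setting $\varepsilon'=\min(\varepsilon,\varepsilon_0)$ gives $Sz_{\varepsilon'/2}(K)\geqslant Sz_{\varepsilon'}(K)\cdot 2 > Sz_\varepsilon(K)$, so no single $\varepsilon$ attains the supremum defining $Sz(K)$.

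The principal obstacle is Lancien's convexity lemma itself: the one-step convex-averaging trick is intuitive, but packaging it into a transfinite statement with ordinal exponent matching the halving of the slice width, and verifying the limit-ordinal step of the induction, requires care. All remaining pieces reduce to routine topological manipulations and ordinal bookkeeping.
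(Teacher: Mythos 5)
This proposition is quoted in the paper with a citation to \cite{Sz} and \cite{La1} and no internal proof, so there is no argument of the paper to compare against; your reconstruction correctly recovers the classical arguments from those sources. Specifically, (i) via the adjoint embedding plus monotonicity and rescaling of the Szlenk derivation, (ii) via slice decomposition in one direction and a Baire-category selection of a small slice in the other (using that bounded subsets of $X^*$ are $w^*$-metrizable when $X$ is separable, and that norm-closed balls are $w^*$-closed), (v) via the $w^*$-to-norm continuity criterion together with compactness of the domain, and (iii), (iv) via Lancien's convexity inequality $Sz_{\varepsilon/n}(K)\geqslant n\cdot Sz_\varepsilon(K)$ --- this last being the only piece that requires real work, just as you note. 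One minor quantitative remark: the derivative-level containment you assert in (i), $T^*(s_\varepsilon^\xi(L))\supseteq s_\delta^\xi(T^*L)$, requires $\delta=\varepsilon/(2\|T\|)$ rather than $\varepsilon/\|T\|$, because the fibre $(T^*)^{-1}(x^*)\cap L$ is covered by finitely many slices of small diameter whose images need only share the common point $x^*$, so the union of images has diameter up to twice that of a single image; this only changes the constant $c$ and does not affect the conclusion $Sz(X)\leqslant Sz(Y)$.
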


\subsection{Weakly null and general $\sigma$ indices} For a given set $S$ and a given $\sigma\subset S^\omega$, we can define the $\sigma$-derivatives and $\sigma$-indices for general hereditary trees on $S$.  Given a tree $\mathcal{H}$ on $S$, we let $$(\mathcal{H})'_\sigma=\{t\in \mathcal{H}: \exists (s_i)\in \sigma, t\verb!^!s_i\in \mathcal{H}\text{\ }\forall i\in \nn\}.$$  If $\mathcal{H}$ is a hereditary tree on $S$, then $(\mathcal{H})'_\sigma$ is also a hereditary tree on $S$.  It is not hard to see that if $\mathcal{H}$ is not hereditary, $(\mathcal{H})'_\sigma$ need not be a tree.  As usual, we define the transfinite $\sigma$-derivatives and $\sigma$-index by $$(\mathcal{H})^0_\sigma= \mathcal{H},$$ $$(\mathcal{H})_\sigma^{\xi+1}= ((\mathcal{H})^\xi_\sigma)'_\sigma,$$ $$(\mathcal{H})^\xi_\sigma=\underset{\zeta<\xi}{\bigcap} (\mathcal{H})^\zeta_\sigma, \xi<\omega_1 \text{\ is a limit ordinal.}$$

We define $I_\sigma(\mathcal{H})=\min \{\xi<\omega_1: (\mathcal{H})^\xi_\sigma=\varnothing\}$ provided this set is non-empty, and $I_\sigma(\mathcal{H})=\omega_1$ otherwise.  We say $\sigma$ \emph{contains diagonals} if any subsequence of a member of $\sigma$ is also a member of $\sigma$, and if for each $j\in \nn$, $(s_{i,j})_i\in \sigma$, then there exists a sequence $(i_j)$ so that $(s_{i_j, j})_j\in \sigma$.  A standard induction proof gives the following.

\begin{proposition}\cite{OSZ} Let $\mathcal{H}$ be a non-empty, hereditary tree on $S$, and suppose $\sigma\subset S^\omega$ contains diagonals.  Then for $0\leqslant \xi<\omega_1$, $I_\sigma(\mathcal{H})>\xi$ if and only if there exists $(t_E)_{E\in \widehat{\fff_\xi}}\subset S$ so that \begin{enumerate}[(i)]\item for each $E\in \widehat{\fff_\xi}$, $(t_{E|_i})_{i=1}^{|E|}\in \mathcal{H}$, \item for each $E\in \fff_\xi'$, $(t_{E\verb!^!n})_{E<n}\in \sigma$. \end{enumerate}

\label{prop5}
\end{proposition}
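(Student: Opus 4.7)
My plan is to prove the equivalence by transfinite induction on $\xi$, handling both directions simultaneously. Throughout, I will use an auxiliary \emph{shift lemma}: for $u \in \mathcal{H}$, writing $\mathcal{H}[u] := \{v : u \verb!^! v \in \mathcal{H}\}$, one has $u \in (\mathcal{H})^\xi_\sigma$ if and only if $\varnothing \in (\mathcal{H}[u])^\xi_\sigma$; this lemma is itself proved by a short induction using $(\mathcal{H}[u])[w] = \mathcal{H}[u \verb!^! w]$. The base case $\xi = 0$ is trivial: $\widehat{\mathcal{F}_0} = \varnothing$ makes the right-hand side vacuous, while $\mathcal{H} \neq \varnothing$ yields $I_\sigma(\mathcal{H}) > 0$.

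For the successor case $\xi+1$, recall $\mathcal{F}_{\xi+1} = (\mathcal{A}_1, \mathcal{F}_\xi)$, whose non-root elements are $(n)$ and $(n) \verb!^! F$ with $n < F \in \widehat{\mathcal{F}_\xi}$. For $(\Rightarrow)$, extract $(s_n) \in \sigma$ with each $s_n \in (\mathcal{H})^\xi_\sigma$ from $\varnothing \in ((\mathcal{H})^\xi_\sigma)'_\sigma$; the shift lemma and the inductive hypothesis provide a tree $(t^n_F)_{F \in \widehat{\mathcal{F}_\xi}}$ inside $\mathcal{H}[(s_n)]$ for each $n$, and the definition $t_{(n)} := s_n$, $t_{(n) \verb!^! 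F} := t^n_F$ assembles the desired tree. For $(\Leftarrow)$, the sequence $(t_{(n)})_n$ lies in $\sigma$ by (ii) at $\varnothing$; for each $n$ define $u^n_E := t_{(n) \verb!^! E^{(n)}}$, where $E^{(n)} := (n+k_i)_i$ for $E = (k_i)$ is a spread of $E$ with $n < \min E^{(n)}$. The spreading of $\mathcal{F}_\xi$ and subsequence closure of $\sigma$ show $(u^n_E)$ is a valid tree in $\mathcal{H}[(t_{(n)})]$, and the inductive hypothesis together with the shift lemma yield $t_{(n)} \in (\mathcal{H})^\xi_\sigma$, hence $\varnothing \in (\mathcal{H})^{\xi+1}_\sigma$.

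The limit case, with $\xi_n \uparrow \xi$, $\mathcal{F}_\xi = \mathcal{D}(\mathcal{F}_{\xi_n})$, and the chain $\mathcal{F}_{\xi_n} \subset \mathcal{F}_{\xi_{n+1}}$ from the construction of the fine Schreier families, carries the main difficulty. For $(\Leftarrow)$, setting $u^n_E := t_{E^{(n)}}$ gives a tree over $\widehat{\mathcal{F}_{\xi_n}}$, since $\min E^{(n)} \geqslant n$ places $E^{(n)} \in \mathcal{F}_\xi$; the inductive hypothesis gives $\varnothing \in (\mathcal{H})^{\xi_n}_\sigma$ for each $n$, and intersecting yields $\varnothing \in (\mathcal{H})^\xi_\sigma$. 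For $(\Rightarrow)$, the inductive hypothesis provides trees $(t^n_E)_{E \in \widehat{\mathcal{F}_{\xi_n}}}$, and the chain makes $t_E := t^{\min E}_E$ well-defined on $\widehat{\mathcal{F}_\xi}$, with (i) and (ii) at interior nodes transferring directly. The obstacle is (ii) at $\varnothing$: it demands $(t^n_{(n)})_n \in \sigma$, whereas the inductive hypothesis only yields $(t^n_{(k)})_k \in \sigma$ for each fixed $n$. This is exactly where the diagonal hypothesis on $\sigma$ is needed: it supplies indices $(k_n)$ with $(t^n_{(k_n)})_n \in \sigma$, which by subsequence closure may be taken strictly increasing with $k_n \geqslant n$. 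Replacing each $t^n$ by its precomposition $\tilde t^n_E := t^n_{L_n(E)}$ with the spread $L_n(i) := k_n + i - n$ produces $\tilde t^n_{(n)} = t^n_{(k_n)}$ while preserving (i) and (ii) (spreading of $\mathcal{F}_{\xi_n}$ ensures $L_n(E) \in \mathcal{F}_{\xi_n}$, and subsequence closure preserves the $\sigma$-condition). Then $t_E := \tilde t^{\min E}_E$ completes the construction. The principal technical care in the write-up will be verifying that precomposition with $L_n$ retains the tree conditions, which reduces to the spreading property of $\mathcal{F}_{\xi_n}$ and the subsequence closure built into the diagonal hypothesis on $\sigma$.
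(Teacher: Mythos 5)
Your induction is correct and is precisely the ``standard induction proof'' the paper alludes to without writing out (the proposition is cited from [OSZ]): base case trivial, successor case via the shift to $\mathcal{H}[u]$ and the sum structure of $\fff_{\xi+1}=(\mathcal{A}_1,\fff_\xi)$, limit case via $\fff_\xi=\mathcal{D}(\fff_{\xi_n})$ with the diagonal property of $\sigma$ supplying condition (ii) at the root. One small polish for the write-up: to get $k_n\geqslant n$ you should apply the diagonal hypothesis to the shifted rows $(t^n_{(k)})_{k\geqslant n}$ (which lie in $\sigma$ by subsequence closure) rather than adjusting the indices afterwards, and strict monotonicity of $(k_n)$ is never needed, since only the inequality $k_n\geqslant n$ is used to make $L_n$ a spread.
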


Observe that in place of $\fff_\xi$, we can use any regular family $\fff$ with $\iota(\fff)=\xi$, since there exists $M\in \infin$ so that $\fff(M)\subset \fff_\xi$ and $\fff_\xi(M)\subset \fff$.  

\begin{example} If $X$ is a Banach space and if $\varnothing \neq K\subset B_{X^*}$ is norm separable, and if $\sigma$ denotes all sequences in $B_X$ which are pointwise null on $K$, then $\sigma$ contains diagonals.  This is because $(x_n)\subset B_X$ is pointwise null on $K$ if and only if $d(x_n)\to 0$, where $d(x)=\sum c_n |x^*(x_n)|$, $(x_n^*)$ is dense in $K$, and $c_n>0$ is chosen so that $\sum c_n \|x_n^*\|<\infty$.  In this case, we denote the \emph{pointwise null on} $K$ \emph{derivative} by $(\mathcal{H})'_K$ and the \emph{pointwise null on} $K$ \emph{index} by $I_K(\mathcal{H})$.  In the case that $K=B_{X^*}$, we refer to this derivative as the the weakly null derivative, denoted $(\mathcal{H})'_w$, and the weakly null index, denoted by $I_w(\mathcal{H})$.

\end{example}

\begin{example} Let $X$ be a Banach space and $\varnothing \neq K\subset X^*$.  For $r>0$, we say $(x_n)\subset B_X$ has $K$ radius $r$ if for any $x^*\in K$, $\lim \sup |x^*(x_n)|\leqslant r$.  If $K$ is norm separable and if $\sigma$ is the collection of sequences $(x_n)\subset B_X$ having $K$ radius $r$, then $\sigma$ contains diagonals.  Clearly any subsequence of a member of $\sigma$ is a member of $\sigma$.  If $(x_n^*)$ is a dense sequence in $K$, and if for each $i\in \nn$, $(x^i_n)_n\in \sigma$, we can choose $i_1, i_2, \ldots$ so that for each $n\in \nn$ and each $1\leqslant k\leqslant n$, $|x^*_k(x^n_{i_n})|< r+1/n$.   Then $(x^n_{i_n})\in \sigma$.  In this case, we let $(\mathcal{H})'_{K,r}$ denote the derivative when $\sigma$ consists of all sequences in $B_X$ with $K$ radius $r$, and $I_{K,r}(\mathcal{H})$ denotes the $\sigma$ index in this case.

\end{example}

\begin{example} If $X$ is a Banach space with FDD $E$, and if $\sigma$ denotes all infinite block sequences in $B_X$ with respect to $E$, then $\sigma$ contains diagonals.  In this case, we denote the block derivative by $(\mathcal{H})_{bl}'$ and the block index by $I_{bl}(\mathcal{H})$.

\end{example}

\begin{example} If $\sigma$ consists of all sequences $(B_n)\subset \fin$ so that $B_n\underset{n}{\to} \varnothing$, then $\sigma$ contains diagonals.  In this case, we also denote the derivative by $(\mathcal{H})_{bl}'$ and the index by $I_{bl}(\mathcal{H})$.  We think of this as a discretized version of the block index for FDDs.

\end{example}

\begin{proposition}\cite{OSZ} Suppose $X$ is a Banach space with FDD $E$.  Let $\mathcal{B}$ be a hereditary block tree in $X$ with respect to $X$.  Let the \emph{compression} $\tilde{\mathcal{B}}$ of $\mathcal{B}$ be defined by $$\tilde{\mathcal{B}}=\{(\max \supp_E(x_i))_{i=1}^k: (x_i)_{i=1}^k\in \mathcal{B}\}.$$  Then for any non-increasing $\overline{\varepsilon}=(\varepsilon_n)\subset (0,1)$, $$\iota(\tilde{\mathcal{B}})\leqslant I_{bl}(\mathcal{B}^{E,X}_{\overline{\varepsilon}}).$$

\label{Zsak}

\end{proposition}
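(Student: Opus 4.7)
The plan is to show that for every $\xi<\iota(\tilde{\mathcal{B}})$ one has $\xi<I_{bl}(\mathcal{B}^{E,X}_{\overline{\varepsilon}})$, which gives the inequality. By Proposition \ref{prop5} applied to $\mathcal{B}^{E,X}_{\overline{\varepsilon}}$ (with $\sigma$ the family of infinite block sequences in $B_X$ with respect to $E$), this reduces to constructing a tree $(y_F)_{F\in\widehat{\fff_\xi}}\subset B_X$ such that each branch $(y_{F|_i})_{i=1}^{|F|}$ lies in $\mathcal{B}^{E,X}_{\overline{\varepsilon}}$ and such that at every $F\in\fff_\xi'$ the children $(y_{F\verb!^!n})_n$ form an infinite block sequence.

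First I apply Proposition \ref{prop5} to $\tilde{\mathcal{B}}$ itself, viewed as a hereditary tree on $\nn$. On any hereditary subtree of $\nn^{<\omega}$, a node is isolated in the Cantor topology exactly when it has only finitely many immediate successors, so the Cantor--Bendixson derivative coincides with the $\sigma$-derivative for $\sigma$ the class of strictly increasing sequences in $\nn$, which plainly contains diagonals. Hence $I_\sigma(\tilde{\mathcal{B}})=\iota(\tilde{\mathcal{B}})>\xi$, and Proposition \ref{prop5} delivers a tree $(m_F)_{F\in\widehat{\fff_\xi}}\subset\nn$ whose branches lie in $\tilde{\mathcal{B}}$ and whose child-sequences $(m_{F\verb!^!n})_n$ are strictly increasing at every non-maximal node.

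For each $F\in MAX(\fff_\xi)$ I pick a representative $(u^F_1,\ldots,u^F_{|F|})\in\mathcal{B}$ with $\max\supp_E(u^F_i)=m_{F|_i}$, using the definition of $\tilde{\mathcal{B}}$. For any fixed $G\in\widehat{\fff_\xi}$ and any leaf $F\succeq G$, the coordinate $u^F_{|G|}$ lies in the norm-compact finite-dimensional ball $B_{[E_n:n\leqslant m_G]}$, so Lemma \ref{second pruning lemma} applies. Iterating the lemma once per $G$ and diagonalizing over the countably many initial segments yields a pruning of $\fff_\xi$ and an assignment $y:\widehat{\fff_\xi}\to B_X$ with the property that, along every chain of leaves $F_l\succeq G$ in the pruned tree, $u^{F_l}_{|G|}\to y_G$ in norm. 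In particular $\max\supp_E(y_G)\leqslant m_G$, and along any branch $F$ one may take $F_l\succeq F$ deep enough to force $\|u^{F_l}_i-y_{F|_i}\|<\varepsilon_i/2$ for each $i\leqslant|F|$; the branch $(y_{F|_i})_{i=1}^{|F|}$ is then within $\overline{\varepsilon}$ of the $\mathcal{B}$-sequence $(u^{F_l}_i)_{i=1}^{|F|}$ and thus lies in $\mathcal{B}^{E,X}_{\overline{\varepsilon}}$.

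The block-sequence condition on the children is enforced by a final bottom-up application of Lemma \ref{pruning lemma}: at each $F\in\fff_\xi'$, the children $y_{F\verb!^!n}$ satisfy $\max\supp_E(y_{F\verb!^!n})\leqslant m_{F\verb!^!n}\to\infty$, while the possibly colliding tails $P^E_{[1,m_{F\verb!^!n_{j-1}}]}y_{F\verb!^!n_j}$ can be driven below $\varepsilon_{|F|+1}/2$ along a subsequence by finite-dimensional compactness; replacing each such vector by its image under the complementary projection produces a genuine block sequence, and the change is absorbed by the remaining perturbation budget. The main obstacle is the coherence step: the initial choices $u^F$ depend on the entire leaf $F$, whereas the target tree $(y_G)$ must depend only on the initial segment $G$. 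This is resolved by combining the finite-dimensional compactness at each coordinate with the iterated top-down Pruning Lemma, and the non-increasing budget $\overline{\varepsilon}$ is precisely what absorbs the limit-passage errors together with the support corrections needed to enforce the block-sequence property.
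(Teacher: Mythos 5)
Your route is genuinely different from the paper's. You apply Proposition~\ref{prop5} to $\tilde{\mathcal{B}}$ to get a witness tree $(m_F)$ in $\nn$, lift it to vectors via representatives $u^F\in\mathcal{B}$, run the top-down pruning of Lemma~\ref{second pruning lemma} to make the lift coherent along branches, and then try to enforce the block condition. The paper instead proceeds by a direct derivative-inclusion induction, first proving $(\supp(\mathcal{B}))^\xi_{bl}\subset \supp\bigl((\mathcal{B}^{E,X}_{\overline{\varepsilon}})^\xi_{bl}\bigr)$ and then the discretized inclusion $(\max\mathcal{B})^\xi\subset\max\bigl((\mathcal{B})^\xi_{bl}\bigr)$, with the finite-dimensional compactness used only locally, one coordinate and one derivative at a time. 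Both strategies are natural, but they confront the same obstruction in different places.

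Your last step has a genuine gap. After the top-down pruning one has $\supp_E(y_{F\verb!^!n})\subset(m_F,m_{F\verb!^!n}]$ for every child of $F$, so all siblings begin immediately after $m_F$. You assert that the ``colliding tails'' $P^E_{[1,m_{F\verb!^!n_{j-1}}]}y_{F\verb!^!n_j}$ can be driven below $\varepsilon_{|F|+1}/2$ along a subsequence by finite-dimensional compactness. Compactness in $[E_k]_{k\leqslant m_{F\verb!^!n_{j-1}}}$ gives a convergent subsequence, not one converging to zero, and nothing in the hypotheses forces the limit to be small. If, for instance, the $(|F|+1)$-st vector of every member of $\mathcal{B}$ witnessing the successors of $F$ carries a fixed component of size $c>0$ on $E_{m_F+1}$, then every $y_{F\verb!^!n}$ inherits such a component, the siblings cannot be turned into a block sequence by any perturbation smaller than $c$, and the tree you build does not satisfy condition~(ii) of Proposition~\ref{prop5} for the block $\sigma$ on $\mathcal{B}^{E,X}_{\overline{\varepsilon}}$. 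Making the child sequences genuinely block is precisely the delicate issue the paper's remark on the $\min$-versus-$\max$ compression is about; it needs an actual argument, not an appeal to compactness.

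Two secondary points are also worth noting: Lemma~\ref{second pruning lemma} is stated for a single fixed compact codomain $K$, while your application needs the codomain $B_{[E_k]_{k\leqslant m_G}}$ to vary with the node $G$; and the inequality $\|u^{F_l}_i-y_{F|_i}\|<\varepsilon_i/2$ must hold with $F_l=F$ when $F$ is a leaf, which requires the quantitative telescoping rate from the second pruning inside the proof of Lemma~\ref{second pruning lemma}, not just continuity of $k\circ\phi$. Both of these are repairable, but should be made explicit.
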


\begin{remark} The compression was defined using minima of supports rather than maxima of supports in \cite{OSZ}, and because of this the result was slightly different.  We include a sketch of the proof to outline how to obtain the version of the statement made here.  

\end{remark}

\begin{proof}[Sketch] First, one defines for any $\mathcal{B}\subset \Sigma(E, X)$ the support tree $$\supp(\mathcal{B})=\{(\supp_E(z_i))_{i=1}^n: (z_i)_{i=1}^n\in \mathcal{B}\}$$ and proves by induction on $\xi$ that for any non-increasing $\overline{\varepsilon}\subset (0,1)$, $$(\supp(\mathcal{B}))_{bl}^\xi \subset \supp\bigl((\mathcal{B}_{\overline{\varepsilon}}^{E,X})_{bl}^\xi\bigr).$$  This part of the proof is unchanged.  

Next, one proves a discretized version of the statement. For each collection $\mathcal{B}$ of finite, successive sequences of finite subsets of $\nn$, one defines $$\max (\mathcal{B})= \{(\max A_i)_{i=1}^n: (A_i)_{i=1}^n\in \mathcal{B}\}.$$  Then one shows by induction that if $\mathcal{B}\subset \fin$ is a hereditary collection of finite, successive sequences of finite subsets of $\nn$, then $$(\max \mathcal{B})^\xi  \subset \max ((\mathcal{B})^\xi_{bl}).$$  Since for any $\mathcal{B}\subset \Sigma(E,X)$, $\tilde{\mathcal{B}}=\max (\supp(\mathcal{B}))$, one applies these two facts to $\tilde{\mathcal{B}}$ to obtain $$\iota(\tilde{\mathcal{B}})=\iota(\max (\supp (\mathcal{B}))) \leqslant I_{bl}(\supp (\mathcal{B}))\leqslant I_{bl}(\mathcal{B}_{\overline{\varepsilon}}^{E,X}).$$  

The difference lies in the discretized version.  If one supposes that \newline $(n_1, \ldots, n_k)\in \max(\mathcal{B})'$ and that $m_j\to \infty$ is such that $(n_1, \ldots, n_k, m_j)\in \max (\mathcal{B})$, we can choose for each $j\in \nn$ some successive $A^j_1, \ldots, A^j_k, C_j\in \fin$ so that $\max A^j_i=n_i$, $\max C_j=m_j$, and $(A^j_1, \ldots, A^j_k, C_j)\in \mathcal{B}$.  Since $A^j_i\subset \{1, \ldots, n_k\}$ for each $j\in\nn$ and each $1\leqslant i\leqslant n$, we can pass to some subsequence and assume we have successive $A_1, \ldots, A_k$ so that $A^j_i=A_i$ for all $j\in \nn$ and $1\leqslant i\leqslant k$. Then $(A_1, \ldots, A_k, C_j)=(A^j_1, \ldots, A^j_k, C_j)\in \mathcal{B}$ for all $j\in \nn$, and $(A_1, \ldots, A_k)\in (\mathcal{B})'_{bl}$. This is how one completes the successor step.  But in the case that we are using minima, we no longer have $A_1^j, \ldots, A^j_k\subset \{1, \ldots, n_k\}$, since we are not controlling the maximum of $A^j_k$, only its minimum.  But if $(n_1, \ldots, n_k)\in (\max (\mathcal{B}))''$, one can fix $m>n_k$ and $m_j\to \infty$ so that $(n_1, \ldots, n_k, m, m_j)\in \mathcal{B}$ for all $j\in \nn$.  One then chooses $A_1^j, \ldots, A^j_k, C_j, D_j$ so that $\min A_i^j=n_i$, $\min C_j= m$, $\min D_j=m_j$.  Now one controls the maximum of $A^j_k$ by controlling the minimum of $C_j$.  In the case of using minima, one must take two Cantor-Bendixson derivatives to one block derivative in order to establish the successor case.  The limit case is clear.

\end{proof}

In what follows, for a Banach space and $\varnothing \neq K\subset B_{X^*}$, we let $$\mathcal{H}^K_\varepsilon = \Bigl\{(x_i)_{i=1}^n\in B_X^{<\omega}: \exists x^*\in K, x^*(x_i)\geqslant \varepsilon \text{\ }\forall 1\leqslant i\leqslant n\Bigr\}.$$  We let $\mathcal{H}^X_\varepsilon = \mathcal{H}^{B_{X^*}}_\varepsilon$.

\subsection{Dualization for separable spaces}

In \cite{AJO}, it was shown that the weakly null $\ell_1^+$ index is equal to the Szlenk index of any separable Banach space not containing $\ell_1$.  Here we discuss how to modify this result to compute the Szlenk index of certain subsets of the dual of a separable Banach space.  

\begin{lemma} If $X$ is a separable Banach space and if $\varnothing \neq K\subset B_{X^*}$ is $w^*$ compact and norm separable, the following are equivalent for any $0<\xi<\omega_1$.  

\begin{enumerate}[(i)] \item There exists $\varepsilon>0$ so that $Sz_\varepsilon(K)>\xi$. 

\item There exists $\varepsilon>0$ so that for every norm separable $\varnothing \neq S \subset X^*$, there exists an $S$ null $(x_E)_{E\in \widehat{\fff_\xi}}\subset B_X$ and a $w^*$ continuous $(x^*_E)_{E\in \fff_\xi}\subset K$ so that for each $E\in \fff_\xi$ and each $\varnothing \prec F\preceq E$, $x_E^*(x_F)\geqslant \varepsilon$.  

\item There exists $\varepsilon>0$ so that for every norm separable $\varnothing \neq S \subset X^*$, there exists an $S$ null $(x_E)_{E\in \widehat{\fff}}\subset B_X$, and $(x_E^*)_{E\in MAX(\fff_\xi)}\subset K$ so that for each $E\in MAX(\fff_\xi)$ and each $\varnothing \prec F\preceq E$, $x_E^*(x_F)\geqslant \varepsilon$. 

\item There exists $\varepsilon>0$ so that for every norm separable $\varnothing \neq S\subset X^*$, $I_S(\mathcal{H}^K_\varepsilon)>\xi$. 

\item There exist $0<r<\varepsilon$ so that for every norm separable $\varnothing \neq S\subset X^*$, $I_{S,r}(\mathcal{H}^K_\varepsilon)>\xi$. \end{enumerate}

Moreover, if $\ell_1$ does not embed into $X$, the result is true without the assumption that $K$ is norm separable.  

\label{dualize}

\end{lemma}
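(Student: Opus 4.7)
The plan is to prove the equivalences via the cycle (i)$\Rightarrow$(ii)$\Rightarrow$(iii)$\Rightarrow$(iv)$\Rightarrow$(v)$\Rightarrow$(i). The implications among (ii)--(v) are essentially bookkeeping. (ii)$\Rightarrow$(iii) simply discards the functionals at non-maximal nodes. For (iii)$\Rightarrow$(iv), given $E\in\widehat{\fff_\xi}$, extend to any maximal descendant $E'\succeq E$ in $MAX(\fff_\xi)$; then $x^*_{E'}\in K$ witnesses $(x_{E|_i})_{i=1}^{|E|}\in\mathcal{H}^K_\varepsilon$, and pointwise-nullness of $(x_{E\verb!^!n})_n$ on $S$ together with Proposition~\ref{prop5} yields $I_S(\mathcal{H}^K_\varepsilon)>\xi$. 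For (iv)$\Rightarrow$(v), any $S$-null sequence has $S$-radius $r$ for every $r>0$, so $I_S(\mathcal{H}^K_\varepsilon)\leqslant I_{S,r}(\mathcal{H}^K_\varepsilon)$, and $r<\varepsilon$ may be chosen freely.

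For (v)$\Rightarrow$(i), the dualization direction, I apply the hypothesis with $S=K$ to obtain via Proposition~\ref{prop5} a tree $(x_E)_{E\in\widehat{\fff_\xi}}\subset B_X$ and witnesses $y^*_E\in K$ for $E\in MAX(\fff_\xi)$ satisfying $y^*_E(x_{E|_i})\geqslant\varepsilon$, with each $(x_{E\verb!^!n})_n$ having $K$-radius at most $r$. Extending $y^*_E$ to all of $\fff_\xi$ along maximal branches and applying Lemma~\ref{second pruning lemma} in the $w^*$-metrizable space $(B_{X^*},w^*)$ makes $E\mapsto y^*_E$ $w^*$-continuous after pruning. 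Fix $\varepsilon'\in(0,\varepsilon-r)$ and prove by induction on the rank $\mathrm{rk}(E)=\max\{\alpha:E\in\fff_\xi^\alpha\}$ that $y^*_E\in s^{\mathrm{rk}(E)}_{\varepsilon'}(K)$. The limit case is automatic from $w^*$-closedness of the slicing derivatives together with the cofinal existence of successors of arbitrarily high rank. For the successor case $\mathrm{rk}(E)=\alpha+1$, combine $y^*_{E\verb!^!n}\xrightarrow{w^*}y^*_E$ with the $K$-radius bound and the lower bound $y^*_{E\verb!^!n}(x_{E\verb!^!n})\geqslant\varepsilon$ to produce, in any $w^*$-neighborhood $V$ of $y^*_E$, pairs $y^*_{E\verb!^!m},y^*_{E\verb!^!n}\in V\cap s^\alpha_{\varepsilon'}(K)$ of norm distance exceeding $\varepsilon'$. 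Evaluating at the root yields $y^*_\varnothing\in s^\xi_{\varepsilon'}(K)\neq\varnothing$, so $Sz_{\varepsilon'}(K)>\xi$.

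For (i)$\Rightarrow$(ii), I build the two trees top-down by recursion on rank. Choose $\delta<\varepsilon$ with $s^\xi_\delta(K)\neq\varnothing$ and anchor $x^*_\varnothing$ there. At a node $E$ of successor rank $\alpha+1$ with $x^*_E\in s^{\alpha+1}_\delta(K)$, extract $y^*_n\in s^\alpha_\delta(K)$ with $y^*_n\xrightarrow{w^*}x^*_E$ and $\|y^*_n-x^*_E\|>\delta$, so that $u^*_n:=y^*_n-x^*_E\in K-K$ is $w^*$-null of norm exceeding $\delta$. Applying the construction of the fourth example of Section~3 to $(u^*_n)$ with ambient separable set $S\cup K\cup\{x^*_F:F\prec E\}$, one obtains vectors $w_n\in B_X$ pointwise null on that set with $u^*_{2n}(w_n)\geqslant\delta/3$; since $x^*_E(w_n)\to 0$, eventually $y^*_{2n}(w_n)\geqslant\delta/4$. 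Set $x_{E\verb!^!n}=w_n$ and $x^*_{E\verb!^!n}=y^*_{2n}$. A further subsequence extraction using $y^*_{2n}(x_F)\to x^*_E(x_F)$ preserves the ancestor inequalities $x^*_{E\verb!^!n}(x_F)\geqslant\delta/5$ for $F\preceq E$. The limit-rank case is analogous, relying on $x^*_E\in s^{\alpha_n+1}_\delta(K)$ for each rank $\alpha_n$ assigned to an immediate successor. A concluding application of Lemma~\ref{second pruning lemma} supplies $w^*$-continuity of $(x^*_E)_{E\in\fff_\xi}$.

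The principal obstacle is the successor step of (i)$\Rightarrow$(ii): converting the raw separation $(y^*_n-x^*_E)(z_n)>\delta$ into a genuinely $S$-null sequence $w_n$ with $y^*_{2n}(w_n)$ uniformly bounded below. This is precisely where the dichotomy in the ``moreover'' clause enters: norm separability of $K$ makes available the pseudometric $d(x)=\sum c_n|x^*_n(x)|$ on $B_X$ used to pass to pointwise-Cauchy-on-$K$ subsequences before forming the paired differences, while the hypothesis $\ell_1\not\hookrightarrow X$ instead supplies Rosenthal's dichotomy in $X$ itself, as explained in the last paragraph of the fourth example of Section~3.
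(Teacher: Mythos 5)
Your equivalence cycle is the same as the paper's, and (ii)$\Rightarrow$(iii)$\Rightarrow$(iv)$\Rightarrow$(v) together with the dualization step (v)$\Rightarrow$(i) track the paper essentially verbatim; in (v)$\Rightarrow$(i) one small simplification is available --- instead of norm-separating a pair $y^*_{E\verb!^!m}, y^*_{E\verb!^!n}$ (a double limit), use $y^*_E$ itself as the second slicing witness, since $y^*_E\in s^\alpha_{\varepsilon'}(K)$ by $w^*$-closedness and $\liminf_n \|y^*_E - y^*_{E\verb!^!n}\| \geqslant \varepsilon - r > \varepsilon'$ falls straight out of the radius bound applied to $y^*_E\in K$. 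Where you genuinely part company with the paper is (i)$\Rightarrow$(ii). You run a single top-down recursion building $(x_E)$ and $(x^*_E)$ simultaneously, forming the differences $y^*_n - x^*_E$ locally at each node and propagating the ancestor pairings through $w^*$-convergence. The paper instead constructs the entire slicing tree $(x^*_E)_{E\in\fff_\xi}$ first, passes to the global difference tree $y^*_E = x^*_E - x^*_{E|_{|E|-1}}$, pairs it with an $S\cup K$-null tree $(x_E)$ whose cross terms $|y^*_F(x_E)|$ are summably small after bottom-up pruning, and then recovers all estimates at once from the telescoping identity $x^*_E = y^*_\varnothing + \sum_{i=1}^{|E|} y^*_{E|_i}$, so that $x^*_E(x_F)$ is the single term $y^*_F(x_F)\geqslant\varepsilon'$ less a sum of cross terms below $\delta$. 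Telescoping buys a one-shot, depth-uniform bound; your propagation replaces it with level-by-level bookkeeping that as sketched is underspecified: ``preserves $\geqslant\delta/5$'' conceals a strict loss at every level, so to close it you must run a summably decreasing invariant $c_k = c - \sum_{j<k}\eta_j$ (also, the slicing step yields $\|y^*_n - x^*_E\|>\delta/2$, not $>\delta$, so the constants want adjusting). Finally, the concluding $w^*$-continuity in (i)$\Rightarrow$(ii) should come from bottom-up pruning of the already $w^*$-convergent tree (Lemma~\ref{pruning lemma} together with the third example of Section~3), not from Lemma~\ref{second pruning lemma}, which is designed for data defined only on $MAX(\fff_\xi)$.
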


\begin{proof}   \noindent (i)$\Rightarrow$ (ii) Suppose $Sz_\varepsilon(K)>\xi$.  Fix $\varnothing \neq S \subset B_{X^*}$ norm separable.  An easy proof by induction gives that there must exist some tree $(x^*_E)_{E\in \fff_\xi}\subset K$ so that for each $E\in \fff'_\xi$, \begin{enumerate}[(i)] \item $x^*_{E\verb!^!n}\underset{w^*}{\to}x^*_E$, \item $\|x^*_E-x^*_{E\verb!^!n}\|> \varepsilon/2$ for all $n>E$.  \end{enumerate}  For each $E\in \widehat{\fff}_\xi$, let $y_E^*=x^*_E-x^*_{E|_{|E|-1}}$. Let $y^*_\varnothing = x^*_\varnothing$. Then $(y_E^*)_{E\in \widehat{\fff}_\xi}\subset K-K$ is a $w^*$ null tree in $X^*$ so that $\|y^*_E\|> \varepsilon/2$ for all $\varnothing \prec E\in \fff_\xi$.   Fix $0<\delta< \varepsilon'<\varepsilon/4$ and $(\varepsilon_n)\subset (0,1)$ so that for each $n\in \nn$, $\delta>n \varepsilon_n+\sum_{i>n}\varepsilon_i$.  By Lemma \ref{pruning lemma}, we can pass to a pruning and assume $(y_E^*)_{E\in \widehat{\fff_\xi}}$, $(x_E)_{E\in \widehat{\fff}_\xi}\subset B_X$ are such that $(x_E)_{E\in \widehat{\fff}_\xi}$ is $S$ null (actually $S\cup K$ null), $y^*_E(x_E)> \varepsilon'$ for all $E\in \widehat{\fff}_\xi$, and that for each $E\in \widehat{\fff}_\xi$, $F\in \fff_\xi$ comparable and not equal, $$|y^*_F(x_E)|< \min\{\varepsilon_{|E|}, \varepsilon_{|F|}\}.$$  Then for each $E\in \widehat{\fff}$ and $\varnothing \prec F\preceq E$, \begin{align*} \Bigl(y^*_\varnothing + \sum_{i=1}^{|E|} y^*_{E|_i}\Bigr)(x_F) & \geqslant y^*_F(x_F)- \sum_{i=0}^{|F|-1}|y^*_{F|_i}(x_F)| - \sum_{i=|F|+1}^{|E|}|y^*_{E|_i}(x_F)| \\ & > \varepsilon'-|F|\varepsilon_{|F|}- \sum_{i>|F|}\varepsilon_i> \varepsilon'-\delta.\end{align*}  But $$y^*_\varnothing + \sum_{i=1}^{|E|} y^*_{E|_i} = x_\varnothing^* +\sum_{i=1}^{|E|} x^*_{E|_i}-x^*_{E|_{i-1}}=x^*_E\in K.$$  Note that $(x_E^*)_{E\in \fff_\xi}$ is $w^*$ convergent, so by pruning once more and passing to the appropriate pruning of $(x_E)_{E\in \widehat{\fff}_\xi}$ (which is still $S \cup K$ null), we can assume $(x^*_E)_{E\in \fff_\xi}\subset K$ is $w^*$ continuous.

\noindent (ii)$\Rightarrow$ (iii) This is trivial. 

\noindent (iii)$\Rightarrow$ (iv) Suppose $\varepsilon>0$ is such that for each norm separable $\varnothing \neq S \subset X^*$, there exists an $S$ null tree $(x_E)_{E\in \widehat{\fff}_\xi}\subset B_X$ with branches lying in $\mathcal{H}^K_\varepsilon$.  Then in this case, this tree witnesses the fact that $I_S(\mathcal{H}^K_\varepsilon)>\xi$.  

\noindent (iv)$\Rightarrow$ (v) This is trivial, since if $\sigma$ denotes all sequences in $B_X$ pointwise null on $S$ if and $\sigma(r)$ denotes all sequences in $B_X$ with $S$ radius $r$, $\sigma\subset \sigma(r)$.  Thus for any $r>0$, $I_S(\mathcal{H})\leqslant I_{S,r}(\mathcal{H})$ for any $\mathcal{H}$.  

\noindent (v)$\Rightarrow$ (i) We apply (v) with $S=K$.  Suppose $(x_E)_{E\in \widehat{\fff}_\xi}\subset B_X$ is $K$ such that for each $E\in \fff'_\xi$, $(x_{E\verb!^!N})$ has $K$ radius $r$, and so that the branches of this tree lie in $\mathcal{H}^K_\varepsilon$. For each $E\in MAX(\fff_\xi)$< choose $x_E^*\in K$ so that for each $\varnothing \prec F\preceq E$, $x^*_E(x_F)\geqslant \varepsilon$.  Applying Lemma \ref{second pruning lemma} with $L=\{0\}$, we can assume that $(x_E^*)_{E\in \fff_\xi}\subset K$ is $w^*$ continuous.  We claim that for each $0\leqslant \zeta\leqslant \xi$ and any $0<\delta<\varepsilon-r$, $(x^*_E)_{E\in MAX(\fff_\xi^\zeta)}\subset s_{\delta}^\zeta(K)$.  This will yield that $x^*_\varnothing \in s_\delta^\xi(K)$, and $Sz_\delta(K)>\xi$.  

First, note that if $\varnothing \prec E\preceq F$, $x^*_F(x_E)\geqslant \varepsilon$.  To see this, choose any sequence $F\preceq F_n\in MAX(\fff_\xi)$ with $F_n\to F$.  Then because the tree $(x_E^*)_{E\in \fff_\xi}$ is strongly $w^*$ convergent, $$x^*_F(x_E) = \lim x^*_{F_n}(x_E)\geqslant \varepsilon.$$  We now prove the claim by induction on $\zeta$.  The $\zeta=0$ case is clear.  Suppose $E\in MAX(\fff^{\zeta+1}_\xi)$.  Then for each $n>E$, $E\verb!^!n\in MAX(\fff^\zeta_\xi)$.  This means $x^*_{E\verb!^!n}\in s_\delta^\zeta(K)$, whence $x^*_E=w^*\text{-}\lim x^*_{E\verb!^!n}\in s_\delta^\zeta(K)$, by $w^*$ compactness.  But $$\lim \inf\|x^*_E-x^*_{E\verb!^!n}\|\geqslant \lim \inf x^*_{E\verb!^!n}(x_{E\verb!^!n}) - x^*_E(x_{E\verb!^!n})\geqslant \lim\inf x^*_{E\verb!^!n}(x_{E\verb!^!n})-r \geqslant \varepsilon-r>\delta.$$    Here we have used that $(x_{E\verb!^!n})$ has $K$ radius $r$.  This means $x^*_E\in s_{\delta}^{\zeta+1}(K)$.  

Last, suppose $\zeta\leqslant \xi$ is a limit ordinal and that we have the result for all smaller ordinals.  Choose $E\in MAX(\fff^\zeta_\xi)$.  Choose $\zeta_n\uparrow \zeta$ and $E<E_n$ such that $n\leqslant E_n$ and $E\verb!^!E_n\in MAX(\fff^{\zeta_n}_\xi)$.  Then $x^*_{E\verb!^!E_n}\in s_{\delta}^{\zeta_n}(K)$ and $$x^*_E=w^*\text{-}\lim x^*_{E\verb!^!E_n}\in \cap_n s_{\delta}^{\zeta_n}(K)= s^\zeta_{\delta}(K).$$

\end{proof}

\begin{corollary} Let $X$ be a separable Banach space. If $\varnothing \neq K \subset X^*$ is separable, then for any separable $S\supset K$, $Sz(K)=\sup_{\varepsilon>0} I_S(\mathcal{H}^K_\varepsilon).$ If $\ell_1$ does not embed into $X$, then $Sz(K)=\sup_{\varepsilon>0} I_w(\mathcal{H}^K_\varepsilon)$.  

\end{corollary}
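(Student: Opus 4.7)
This corollary is essentially a direct reformulation of Lemma \ref{dualize}. The plan is to show, for each ordinal $\xi<\omega_1$, the equivalence $Sz(K)>\xi \iff \sup_{\varepsilon>0} I_S(\mathcal{H}^K_\varepsilon)>\xi$ by establishing two one-sided inequalities, and then repeat the same argument with $S = B_{X^*}$ under the $\ell_1$ hypothesis.

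For the first assertion, fix a norm separable $S\supset K$. The inequality $Sz(K) \leqslant \sup_\varepsilon I_S(\mathcal{H}^K_\varepsilon)$ is immediate from Lemma \ref{dualize}: if $Sz_\varepsilon(K) > \xi$, then the implication (i)$\Rightarrow$(iv) applied to this particular $S$ yields $I_S(\mathcal{H}^K_\varepsilon) > \xi$. For the reverse inequality, suppose $I_S(\mathcal{H}^K_\varepsilon) > \xi$ for some $\varepsilon > 0$. Because $K \subset S$, any sequence in $B_X$ pointwise null on $S$ is pointwise null on $K$, and any such sequence has $K$ radius $0$, hence $K$ radius $r$ for every $r > 0$. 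Comparing the associated $\sigma$'s shows
\[
I_S(\mathcal{H}^K_\varepsilon) \;\leqslant\; I_K(\mathcal{H}^K_\varepsilon) \;\leqslant\; I_{K,r}(\mathcal{H}^K_\varepsilon)
\]
for every $r>0$. Choose $r\in(0,\varepsilon)$ and invoke the proof of (v)$\Rightarrow$(i) in Lemma \ref{dualize} specialized to $S := K$ — observe that this half of the lemma only ever uses the choice $S = K$ — to obtain $Sz_{\varepsilon-r}(K) > \xi$, so $Sz(K) > \xi$.

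The second assertion is handled in exactly the same manner, using the ``moreover'' clause of Lemma \ref{dualize}, which permits us to drop separability of $K$ under the assumption that $\ell_1$ does not embed in $X$. One runs the same two-way argument with $S = B_{X^*}$: the notion of ``$S$ null'' becomes ``weakly null'' and the index $I_S$ becomes $I_w$. The chain $I_w(\mathcal{H}^K_\varepsilon) \leqslant I_K(\mathcal{H}^K_\varepsilon) \leqslant I_{K,r}(\mathcal{H}^K_\varepsilon)$ still holds because weakly null implies pointwise null on any subset of $X^*$, so the same specialization of (v)$\Rightarrow$(i) with $S := K$ closes the loop.

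There is no real obstacle; the proof is essentially bookkeeping of monotonicity inequalities between the indices $I_S$, $I_K$, and $I_{K,r}$. The only mild subtlety worth flagging is that while clauses (ii)--(v) of Lemma \ref{dualize} are quantified over \emph{every} norm separable $S$, the directions needed here can always be specialized to a single $S$: the forward direction uses (i)$\Rightarrow$(iv) applied to the given $S$, and the reverse uses (v)$\Rightarrow$(i) applied to $S = K$, which is precisely what the proof of that implication requires.
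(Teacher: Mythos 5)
Your proof is correct and follows essentially the same route as the paper: the $\leqslant$ direction comes from (i)$\Rightarrow$(iv) of Lemma \ref{dualize} specialized to the given $S$, while the $\geqslant$ direction runs (v)$\Rightarrow$(i) with $S=K$, using the observation that $K\subset S$ (respectively, weak nullity) reduces $I_S$ (respectively, $I_w$) through $I_K$ to $I_{K,r}$. You simply spell out the monotonicity chain $I_S(\mathcal{H}^K_\varepsilon)\leqslant I_K(\mathcal{H}^K_\varepsilon)\leqslant I_{K,r}(\mathcal{H}^K_\varepsilon)$ that the paper leaves implicit; the only trivial slip is that (v)$\Rightarrow$(i) yields $Sz_\delta(K)>\xi$ for $\delta<\varepsilon-r$, not $Sz_{\varepsilon-r}(K)>\xi$, which does not affect the conclusion.
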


\begin{proof} If $\xi<I_S(\mathcal{H}^K_\varepsilon)$ for $S\supset K$ norm separable, or if $I_w(\mathcal{H}^K_\varepsilon)>\xi$, then the proof of (v)$\Rightarrow $ (i) above gives that $Sz(K)>\xi$.  Therefore $Sz(K)\geqslant \sup_{\varepsilon>0}I_S(\mathcal{H}^K_\varepsilon)$, $Sz(K)\geqslant \sup_{\varepsilon>0} I_w(\mathcal{H}^K_\varepsilon)$.  But (i)$\Rightarrow$(iv) above implies that if $\xi<Sz_\delta(K)$, then $\xi< \sup_{\varepsilon>0}I_S(\mathcal{H}^K_\varepsilon)$, and $Sz(K)\leqslant \sup_{\varepsilon>0}I_S(\mathcal{H}^K_\varepsilon)$. The (i)$\Rightarrow $ (iv) in the ``moreover'' case of Lemma \ref{dualize} yields that $Sz(K)\leqslant \sup_{\varepsilon>0}I_w(\mathcal{H}^K_\varepsilon)$ in the case that $\ell_1$ does not embed into $X$.

\end{proof}

\subsection{First application: Minkowski sums}

\begin{theorem} For any separable Banach space $X$, any $\varepsilon>0$, and $\varnothing\neq K, L,S\subset X^*$ norm separable so that $K, L$ are $w^*$ compact, $$I_S(\mathcal{H}^{K+L}_\varepsilon) \leqslant I_S(\mathcal{H}^K_{\varepsilon/2})\oplus I_S(\mathcal{H}^L_{\varepsilon/2}).$$  If $\varnothing \neq K, L\subset X^*$ are also assumed to be convex, then $Sz(K+L)=\max\{Sz(K), Sz(L)\}$.  In particular, for any separable Banach spaces $Y, Z$, $Sz(Y\oplus Z)=\max \{Sz(Y), Sz(Z)\}$. 

\label{minkowski}

\end{theorem}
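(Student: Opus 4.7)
My plan is to reduce the theorem to the single inequality $I_S(\mathcal{H}^{K+L}_\varepsilon) \leqslant I_S(\mathcal{H}^K_{\varepsilon/2}) \oplus I_S(\mathcal{H}^L_{\varepsilon/2})$, then derive the Szlenk statements as formal consequences. For the Szlenk part (in the convex case), the lower bound $Sz(K+L) \geqslant \max\{Sz(K), Sz(L)\}$ follows from monotonicity of $Sz$ under inclusion combined with the translation invariance $Sz(K+l^*) = Sz(K)$ for any $l^* \in L$. For the upper bound, the corollary following Lemma \ref{dualize} converts $Sz(K+L)$ to $\sup_\varepsilon I_S(\mathcal{H}^{K+L}_\varepsilon)$, so the $I_S$ inequality gives $Sz(K+L) \leqslant Sz(K) \oplus Sz(L)$. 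Since $Sz(K), Sz(L), Sz(K+L)$ are each either $\omega_1$ or a power of $\omega$ (Proposition 5.1(iii)), and the Hessenberg sum of two powers of $\omega$ is strictly below the next power, this collapses the upper bound to $\max\{Sz(K), Sz(L)\}$. The $Y \oplus Z$ corollary is the special case $K = B_{Y^*}$, $L = B_{Z^*}$ embedded in $(Y \oplus Z)^*$, noting $B_{(Y \oplus Z)^*} \subseteq K + L$.

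The main inequality is proved by contradiction using the coloring lemma for sums. Set $\alpha = I_S(\mathcal{H}^K_{\varepsilon/2})$ and $\beta = I_S(\mathcal{H}^L_{\varepsilon/2})$, which I may assume are countable, and suppose $I_S(\mathcal{H}^{K+L}_\varepsilon) > \alpha \oplus \beta$. By Proposition \ref{prop5} I produce a tree $(x_E)_{E \in \widehat{\fff_{\alpha \oplus \beta}}} \subseteq B_X$ whose immediate-successor sequences are pointwise null on $S$ and whose branches lie in $\mathcal{H}^{K+L}_\varepsilon$, and by a preliminary prune via Lemma \ref{pruning lemma} (as in Example 5.8) I arrange that it is strongly $S$-null. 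For each $F \in MAX(\fff_{\alpha \oplus \beta})$ select a witnessing $x^*_F \in K + L$ and decompose $x^*_F = y^*_F + z^*_F$ with $y^*_F \in K$, $z^*_F \in L$. Since $y^*_F(x_E) + z^*_F(x_E) \geqslant \varepsilon$ whenever $E \preceq F$, the sets
\[ \aaa^0_E = \{F : E \preceq F,\; y^*_F(x_E) \geqslant \varepsilon/2\}, \qquad \aaa^1_E = \{F : E \preceq F,\; z^*_F(x_E) \geqslant \varepsilon/2\} \]
form a valid $2$-coloring of $\fff_{\alpha \oplus \beta}$.

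Lemma \ref{coloring lemma for sums} then produces ordinals $\xi_0, \xi_1$ with $\xi_0 \oplus \xi_1 = \alpha \oplus \beta$ and extended embeddings $(i_j, e_j) : \fff_{\xi_j} \to \fff_{\alpha \oplus \beta}$ whose induced colorings are monochromatically $j$. Strict monotonicity of the Hessenberg sum in each argument forces $\xi_0 \geqslant \alpha$ or $\xi_1 \geqslant \beta$; assume without loss of generality the former. The transported tree $(x_{i_0(E)})_{E \in \widehat{\fff_{\xi_0}}}$ together with the branch functionals $y^*_{e_0(E)} \in K$ has all branches in $\mathcal{H}^K_{\varepsilon/2}$ by monochromaticity. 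Once the $S$-null condition is verified at every level, Proposition \ref{prop5} yields $I_S(\mathcal{H}^K_{\varepsilon/2}) > \xi_0 \geqslant \alpha$, contradicting the definition of $\alpha$.

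The main obstacle is precisely this $S$-null verification: an abstract order embedding $i_0$ need not carry immediate successors of $E$ in $\fff_{\xi_0}$ to immediate successors of $i_0(E)$ in $\fff_{\alpha \oplus \beta}$, so $(x_{i_0(E \verb!^! n)})_n$ is a priori only a sequence of pairwise incomparable extensions of $i_0(E)$ of varying length. I plan to handle this by inspecting the recursive construction of $i_0$ in the proof of Lemma \ref{coloring lemma for sums}: the embeddings produced there are built by prepending fixed indices and composing with spreads (via the maps $M_n$) and previously constructed embeddings, so a straightforward induction on the ordinal structure shows that distinct images $i_0(E \verb!^! n)$ branch off from $i_0(E)$ at distinct first new coordinates. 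Consequently the segments $c_n = \{D \in \fff_{\alpha \oplus \beta} : i_0(E) \prec D \preceq i_0(E \verb!^! n)\}$ are pairwise disjoint, and applying the strong $S$-nullness of the pre-pruned tree to these disjoint segments (with trivial convex combinations $y_n = x_{i_0(E \verb!^! n)}$) yields the required pointwise nullness of $(x_{i_0(E \verb!^! n)})_n$ on $S$.
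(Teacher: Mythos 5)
Your overall strategy is the same as the paper's: decompose each witnessing functional $x^*_F \in K+L$ as $y^*_F + z^*_F$, form the two-coloring by which summand achieves $\geqslant \varepsilon/2$ at each level, apply Lemma \ref{coloring lemma for sums}, and transport the tree along the resulting extended embedding. The Szlenk consequences and the $Y\oplus Z$ corollary are also handled in essentially the paper's way (the paper uses the $\ell_1$-sum so that $B_{(Y\oplus_1 Z)^*}$ is \emph{exactly} $B_{Y^*}+B_{Z^*}$, and then invokes isomorphism invariance of $Sz$; your inclusion $B_{(Y\oplus Z)^*}\subseteq K+L$ needs the same cleanup, but the idea is right).

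The one substantive issue is the step you identify as ``the main obstacle'' --- verifying that the transported tree $(x_{i_0(E)})_{E\in\widehat{\fff_{\xi_0}}}$ is $S$-null. You propose to trace through the recursion in the proof of Lemma \ref{coloring lemma for sums} and establish a ``branch distinctness'' property of the embeddings, so as to obtain pairwise disjoint \emph{segments} $c_n=\{D: i_0(E)\prec D\preceq i_0(E\verb!^!n)\}$. This is more machinery than the problem requires, and the claim itself (that distinct immediate successors of $E$ map to extensions of $i_0(E)$ diverging at the very first new coordinate) is not something you get for free from $(i_0, e_0)$ being an extended embedding; it would require re-proving a strengthening of Lemma \ref{coloring lemma for sums}. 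But none of that is needed: the definition of a \emph{strongly} $S$-null tree (arranged by the pruning in Example 3.8, which forces $\sum_{E\in\fin} d(x_E)<\infty$) allows the chains $c_k\in C(\fff)$ to be \emph{singletons}. Taking $c_n=\{i_0(E\verb!^!n)\}$, which are pairwise disjoint simply because $i_0$ is injective, and $y_n = x_{i_0(E\verb!^!n)}$ as the trivial convex combination, strong $S$-nullness immediately gives that $(x_{i_0(E\verb!^!n)})_n$ is pointwise null on $S$ --- for \emph{any} embedding $i_0$, with no structural analysis of how it was built. This is precisely why the paper fixes a strongly $S$-null tree at the outset and then passes from ``$S$-null tree'' to ``$S$-null transported tree'' without comment. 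So your proof is correct in outline, but the branch-distinctness detour is unnecessary and should be replaced by this one-line appeal to strong $S$-nullness with singleton chains.
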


\begin{remark} The third part of the statement was shown in \cite{OSZ}, where it was shown using slicings of the dual ball.

\end{remark}

\begin{proof}

Let $K_0=K$ and $K_1=L$.  Suppose $\xi<I_S(\mathcal{H}^{K_0+K_1}_\varepsilon)$.  Fix a strongly $S$ null tree $(x_E)_{E\in \widehat{\fff}_\xi}\subset B_X$ with branches lying in $\mathcal{H}^{K_0+K_1}_\varepsilon$.  For each $F\in MAX(\fff_\xi)$, choose $x^*_F(0)\in K_0$ and $x^*_F(1)\in K_1$ so that for each $\varnothing \prec E\preceq F$, $(x^*_F(0)+x^*_F(1))(x_E)\geqslant \varepsilon$.  For $E\in \widehat{\fff}_\xi$ and $j=0$ or $1$, let $$\aaa^j_E =\{F\in MAX(\fff_\xi): E\preceq F, x^*_F(j)(x_E)\geqslant \varepsilon/2\}.$$  By Lemma \ref{coloring lemma for sums}, we can find $\xi_0, \xi_1$ with $\xi_0\oplus \xi_1=\xi$ and for $j=0$ or $1$ an extended embedding $(i_j, e_j)$ of $\fff_{\xi_j}$ into $\fff_\xi$ so that the induced coloring is monochromatically $j$.  But this means that for each $ E\in MAX(\fff_{\xi_j})$, $$e(E)\in \bigcap_{k=1}^{|E|} \aaa^j_{i_j(E|_k)},$$ which means $x^*_{e(E)}(j)(x_{i_j(E|_k)})\geqslant \varepsilon/2$ for $0<k\leqslant |E|$.   Thus the $S$ null tree $(x_{i_j(E)})_{E\in \widehat{\fff_{\xi_j}}}$ witnesses the fact that $\xi_j< I_S(\mathcal{H}^{K_j}_{\varepsilon/2})$.  Then $$\xi=\xi_0\oplus \xi_1 < I_S(\mathcal{H}^K_{\varepsilon/2})\oplus I_S(\mathcal{H}^L_{\varepsilon/2}).$$  Since $\xi<I_S(\mathcal{H}^{K+L}_\varepsilon)$ was arbitrary, $I_S(\mathcal{H}^{K+L}_\varepsilon)\leqslant I_S(\mathcal{H}^K_{\varepsilon/2})\oplus I_S(\mathcal{H}^L_{\varepsilon/2})$.

For the second statement, $\max\{Sz(K), Sz(L)\}=\omega^\xi$ for some $0\leqslant \xi<\omega_1$.  If $\xi=0$, both $K$ and $L$, and therefore $K+L$, must be norm compact.  This gives the result in the case that $\xi=0$.  Suppose $\xi>0$. Then $$I_{K\cup L}(\mathcal{H}^{K+L}_\varepsilon)\leqslant I_{K\cup L}(\mathcal{H}^K_{\varepsilon/2})\oplus I_{K\cup L}(\mathcal{H}^L_{\varepsilon/2})\leqslant I_K(\mathcal{H}^K_{\varepsilon/2})\oplus I_L(\mathcal{H}^L_{\varepsilon/2})<\omega^\xi.$$ Here we have used the fact that $I_K(\mathcal{H}^K_{\varepsilon/2}), I_L(\mathcal{H}^L_{\varepsilon/2})$ are successors, and therefore strictly less than $Sz(K), Sz(L)$, respectively.  Since any sequence pointwise null on $K\cup L$ is pointwise null on $K+L$, we can take the supremum over $\varepsilon$ and deduce $Sz(K+L)\leqslant \max\{Sz(K), Sz(L)\}$. Since $K+L$ contains translates of $K$ and $L$, and since the Szlenk index is translation invariant, $Sz(K+L)\geqslant \max\{Sz(K), Sz(L)\}$.  

For the last part, it is sufficient to assume $Y^*, Z^*$ are separable, since otherwise both sides of the equation are $\omega_1$.  It is clear that $Sz(Y\oplus Z)\geqslant \max\{Sz(Y), Sz(Z)\}$ and that $Sz(Y\oplus Z)=Sz(Y\oplus_1 Z)$, so we assume $Y \oplus Z=Y\oplus_1 Z$. We identify $Y$, $Z$ in the natural way with subspaces of $Y\oplus_1 Z$ and note that with this identifcation, $B_{(Y\oplus_1 Z)^*}= B_{Y^*}+B_{Z^*}$. The previous paragraph now gives the conclusion.

\end{proof}

\subsection{Second application: Szlenk index of an operator}

Given an operator $T:X\to Y$ with $X$ separable, the Szlenk index $Sz(T)$ of $T$ is defined to be $Sz(T^*B_{Y^*})$. The next theorem was shown in \cite{Brooker} using the usual definition of the Szlenk index, while what we show uses our dualization of the Szlenk index.  What we have already done easily yields the following: 

\begin{theorem} For $\xi<\omega_1$, and separable Banach spaces $X, Y$, we let $$\mathcal{SZ}_\xi(X, Y)=\{T\in \mathcal{L}(X, Y): Sz(T)\leqslant \omega^\xi\}.$$  Then for any separable Banach spaces $W, X, Y, Z$, $\xi<\omega_1$, $S\in \mathcal{SZ}_\xi(X, Y)$, and $T\in \mathcal{L}(W,X)$, $R\in \mathcal{L}(Y,Z)$, $RST\in \mathcal{SZ}_\xi(W,Z)$.  Moreover, $\mathcal{SZ}_\xi(X,Y)$ is a closed subspace of $\mathcal{L}(X,Y)$.

\end{theorem}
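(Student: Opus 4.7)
The plan is to handle the ideal property (composition), the algebraic subspace conditions (closure under sums and scalar multiples), and norm closedness separately, reducing each to tools already developed in the paper.

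For the ideal property, I would first reduce to the key inequality $Sz(T^*K) \leqslant Sz(K)$ for $T \in \mathcal{L}(W,X)$ and $w^*$ compact $K \subseteq X^*$. Since $(RST)^* B_{Z^*} = T^* S^* R^* B_{Z^*} \subseteq \|R\|\, T^*(S^* B_{Y^*})$, and $Sz$ is monotone under inclusion of $w^*$ compact sets and satisfies $Sz(\lambda L) = Sz(L)$ for $\lambda > 0$ (from $s_\varepsilon(\lambda L) = \lambda s_{\varepsilon/\lambda}(L)$), this gives $Sz(RST) \leqslant Sz(T^* K)$ with $K = S^* B_{Y^*}$. For $Sz(T^* K) \leqslant Sz(K)$ itself, I would avoid direct slicing arguments (the adjoint of a bounded operator is not $w^*$-open, so the naive lifting approach fails) and instead use the dualization of Lemma \ref{dualize}: both $K$ and $T^* K$ are norm separable because $Sz(K) < \omega_1$, so fix a norm separable $S_1 \supseteq K$ in $X^*$ and set $S_0 = T^* K \cup T^* S_1$, also norm separable. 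Any $S_0$-null tree $(w_E)_{E \in \widehat{\fff_\xi}} \subseteq B_W$ whose branches lie in $\mathcal{H}^{T^* K}_\varepsilon$ pushes forward under $w \mapsto Tw/\|T\|$ to an $S_1$-null tree in $B_X$ whose branches lie in $\mathcal{H}^K_{\varepsilon/\|T\|}$, because $y^*(Tw_i/\|T\|) = (T^* y^*)(w_i)/\|T\|$ for $y^* \in K$. Proposition \ref{prop5} then yields $I_{S_0}(\mathcal{H}^{T^* K}_\varepsilon) \leqslant I_{S_1}(\mathcal{H}^K_{\varepsilon/\|T\|})$, and taking the supremum over $\varepsilon$ together with the corollary to Lemma \ref{dualize} delivers $Sz(T^* K) \leqslant Sz(K)$. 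The degenerate case $T = 0$ gives $Sz(T^* K) = 1$ trivially.

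The subspace conditions follow easily: closure under scalar multiplication is immediate from $(\lambda T)^* B_{Y^*} = \lambda T^* B_{Y^*}$ and scale invariance of $Sz$, and for addition, $(T_1 + T_2)^* B_{Y^*} \subseteq T_1^* B_{Y^*} + T_2^* B_{Y^*}$ combined with the Minkowski sum identity in Theorem \ref{minkowski} gives $Sz(T_1 + T_2) \leqslant \max\{Sz(T_1), Sz(T_2)\} \leqslant \omega^\xi$.

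For norm closedness, suppose $T_n \to T$ in $\mathcal{L}(X,Y)$ with each $T_n \in \mathcal{SZ}_\xi$, set $\delta_n = \|T - T_n\|$, and observe that $S = T^* B_{Y^*} \cup \bigcup_n T_n^* B_{Y^*}$ is norm separable (each $T_n^* B_{Y^*}$ is norm separable since $Sz(T_n) < \omega_1$, and every $T^* y^*$ is approximated in norm by $T_n^* y^*$). From $T^* B_{Y^*} \subseteq T_n^* B_{Y^*} + (T - T_n)^* B_{Y^*}$ and $(T - T_n)^* B_{Y^*} \subseteq \delta_n B_{X^*}$, for any $\varepsilon > 2\delta_n$ no element of $(T - T_n)^* B_{Y^*}$ has norm reaching $\varepsilon/2$, so $\mathcal{H}^{(T - T_n)^* B_{Y^*}}_{\varepsilon/2} = \{\varnothing\}$ and $I_S(\mathcal{H}^{(T - T_n)^* B_{Y^*}}_{\varepsilon/2}) = 0$. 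The Minkowski inequality from Theorem \ref{minkowski} then yields
$$I_S(\mathcal{H}^{T^* B_{Y^*}}_\varepsilon) \leqslant I_S(\mathcal{H}^{T_n^* B_{Y^*}}_{\varepsilon/2}) \oplus I_S(\mathcal{H}^{(T - T_n)^* B_{Y^*}}_{\varepsilon/2}) = I_S(\mathcal{H}^{T_n^* B_{Y^*}}_{\varepsilon/2}) \leqslant Sz(T_n) \leqslant \omega^\xi$$
for all sufficiently large $n$, and taking the supremum over $\varepsilon > 0$ gives $Sz(T) \leqslant \omega^\xi$. The main obstacle is the inequality $Sz(T^* K) \leqslant Sz(K)$, which the tree-pushing argument through Lemma \ref{dualize} handles cleanly; the subspace and closedness parts then reduce to bookkeeping around the Minkowski sum theorem already in hand.
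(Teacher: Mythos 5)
Your argument is correct in substance, and for the composition, scalar, and sum parts it is essentially the paper's route (dual-set inclusions, scale invariance of $Sz$, and the Minkowski index inequality of Theorem \ref{minkowski}); your treatment of $Sz(T^*K)\leqslant Sz(K)$ by pushing an $S_0$-null tree forward under $w\mapsto Tw/\|T\|$ and invoking Proposition \ref{prop5} together with the corollary to Lemma \ref{dualize} is just a repackaging of the paper's inductive containment $T\bigl((\mathcal{H}^{T^*S^*B_{Y^*}}_\varepsilon)^\zeta_{T^*S^*B_{Y^*}}\bigr)\subset (\mathcal{H}^{S^*B_{Y^*}}_\varepsilon)^\zeta_{S^*B_{Y^*}}$ (your auxiliary supersets $S_0,S_1$ are harmless but unnecessary, since the corollary allows $S=K$). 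Where you genuinely diverge is norm closedness: the paper shows the complement of $\mathcal{SZ}_\xi$ is open, by observing that a witnessing $S^*B_{Y^*}$-null tree for $I_{S^*B_{Y^*}}(\mathcal{H}^{S^*B_{Y^*}}_\varepsilon)>\omega^\xi$ remains, for $\|S-U\|<\varepsilon/3$, a witness for the radius index $I_{U^*B_{Y^*},\varepsilon/3}(\mathcal{H}^{U^*B_{Y^*}}_{2\varepsilon/3})>\omega^\xi$, which is exactly what condition (v) of Lemma \ref{dualize} was designed for; you instead show the limit operator lies in $\mathcal{SZ}_\xi$ via the perturbation $T^*B_{Y^*}\subset T_n^*B_{Y^*}+(T-T_n)^*B_{Y^*}$ and the Minkowski index inequality. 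Your route buys a proof that never uses the radius index $I_{K,r}$, at the cost of invoking the (heavier) coloring-based Minkowski theorem; the paper's route is more elementary at this step.

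Three small points to tidy in your closedness argument. First, Theorem \ref{minkowski} requires both summands to be norm separable; $(T-T_n)^*B_{Y^*}$ is, since it sits inside $T^*B_{Y^*}-T_n^*B_{Y^*}$, but you should say so. Second, with the paper's definition of $\mathcal{H}^K_\varepsilon$ the empty sequence belongs to it whenever $K\neq\varnothing$, so $I_S(\mathcal{H}^{(T-T_n)^*B_{Y^*}}_{\varepsilon/2})=1$, not $0$; this is harmless for $\xi\geqslant 1$ because $I_S(\mathcal{H}^{T_n^*B_{Y^*}}_{\varepsilon/2})$ is a successor ordinal $\leqslant\omega^\xi$ and $\omega^\xi$ is a limit, so the Hessenberg sum with $1$ stays below $\omega^\xi$, but it does break the bound when $\xi=0$. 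Third, therefore, treat $\xi=0$ separately (as the paper does: $\mathcal{SZ}_0$ is the ideal of compact operators), or avoid the issue entirely by noting directly that for $\delta_n<\varepsilon/2$ one has $\mathcal{H}^{T^*B_{Y^*}}_\varepsilon\subset\mathcal{H}^{T_n^*B_{Y^*}}_{\varepsilon/2}$, which gives $I_S(\mathcal{H}^{T^*B_{Y^*}}_\varepsilon)\leqslant I_S(\mathcal{H}^{T_n^*B_{Y^*}}_{\varepsilon/2})\leqslant Sz(T_n)$ without any appeal to the Minkowski theorem.
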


\begin{proof} Note that $\mathcal{SZ}_0(X,Y)$ is simply the compact operators, so the result is well-known.  Assume $\xi>0$.  We first note that in this case, $S^*B_{Y^*}$ is norm separable and $w^*$ compact for every $S\in \mathcal{SZ}_\xi(X,Y)$.  

Note that $Sz(0)=1$, so $0\in \mathcal{SZ}_\xi$ for any $0\leqslant \xi<\omega_1$.  If $S\in \mathcal{SZ}_\xi$, for any $\varepsilon>0$ and non-zero scalar $c$, $$I_{(cS^*)B_{Y^*}}(\mathcal{H}^{(cS^*)B_{Y^*}}_\varepsilon) = I_{S^*B_{Y^*}}(\mathcal{H}^{(cS^*)B_{Y^*}}_\varepsilon)=I_{S^*B_{Y^*}}(\mathcal{H}^{S^*B_{Y^*}}_{c^{-1}\varepsilon})\leqslant Sz(S).$$  Therefore $Sz(cS)\leqslant Sz(S)$.  Since $c\neq 0$ was arbitrary, $Sz(S)=Sz(cS)$.  

If $Sz(S)>\omega^\xi$, there exists $\varepsilon>0$ so that $I_{S^*B_{Y^*}}(\mathcal{H}^{S^*B_{Y^*}}_\varepsilon)>\omega^\xi$.  This means there exists $(x_E)_{E\in \widehat{\fff}_\xi}\subset B_X$ which is $S^*B_{Y^*}$ null and has branches lying in $\mathcal{H}^{S^*B_{Y^*}}_\varepsilon$.  If $\|S-U\|<\varepsilon/3$, any member of $\mathcal{H}^{S^*B_{Y^*}}_\varepsilon$ is a member of $\mathcal{H}^{U^*B_{Y^*}}_{2 \varepsilon/3}$.  Moreover, any $S^*B_{Y^*}$ null sequence $(x_n)\subset B_X$ is a $U^*B_{Y^*}$ radius $\varepsilon/3$ sequence.  Therefore $(x_E)_{E\in \widehat{\fff}_\xi}\subset B_X$ witnesses the fact that $I_{U^*B_{Y^*}, \varepsilon/3}(\mathcal{H}^{U^*B_{Y^*}}_{2\varepsilon/3})>\xi$.  Therefore $Sz(U)>\omega^\xi$.  Thus $S$ cannot be the norm limit of a sequence lying in $\mathcal{SZ}_\xi$, and $\mathcal{SZ}_\xi$ is a norm closed subset of $\mathcal{L}(X,Y)$.  

Using the fact that $(S^*+U^*)B_{Y^*}\subset S^*B_{Y^*}+U^*B_{Y^*}$ and Theorem \ref{minkowski}, $$Sz(S+U)=Sz((S^*+U^*)B_{Y^*})\leqslant Sz(S^*B_{Y^*}+U^*B_{Y^*})=\max\{S^*B_{Y^*}, U^*B_{Y^*}\},$$ whence $\mathcal{SZ}_\xi(X, Y)$ is closed under finite sums.  

Suppose $S\in \mathcal{SZ}_\xi(X,Y)$ and $R\in \mathcal{L}(Y,Z)$ has norm not exceeding $1$.  Then $$\mathcal{H}^{S^*R^*B_{Z^*}}_\varepsilon\subset \mathcal{H}^{S^*B_{Y^*}}_\varepsilon,$$ since $S^*R^*B_{Z^*}\subset S^*B_{Y^*}$.  Thus $$I_{S^*B_{Y^*}}(\mathcal{H}^{S^*R^*B_{Z^*}}_\varepsilon)\leqslant I_{S^*B_{Y^*}}(\mathcal{H}^{S^*B_{Y^*}}_\varepsilon)\leqslant Sz(S).$$  Since $S^*R^*B_{Z^*}\subset S^*B_{Z^*}$, Lemma \ref{dualize} gives that $$Sz(SR) \leqslant \sup_{\varepsilon>0} I_{S^*B_{Y^*}}(\mathcal{H}^{S^*R^*B_{Z^*}}_\varepsilon) \leqslant \sup_{\varepsilon>0} I_{S^*B_{Y^*}}(\mathcal{H}^{S^*B_{Y^*}}_\varepsilon)=Sz(S)\leqslant \omega^\xi.$$  

Suppose $S\in \mathcal{SZ}_\xi(X,Y)$ and $T\in \mathcal{L}(W,X)$ has norm not exceeding $1$.  Note that $$T(\mathcal{H}^{T^*S^*B_{Y^*}}_\varepsilon)\subset \mathcal{H}^{S^*B_{Y^*}}_\varepsilon. $$ More generally, an easy proof by induction shows that for any $\xi$, $$T((\mathcal{H}^{T^*S^*B_{Y^*}}_\varepsilon)_{T^*S^*B_{Y^*}}^\xi)\subset (\mathcal{H}^{S^*B_{Y^*}}_\varepsilon)_{S^*B_{Y^*}}^\xi.$$  The only non-trivial step is the successor step, for which we note that any sequence $(u_j)\subset B_W$ which is pointwise null on $T^*S^*B_{Y^*}$ is such that $(Tu_j)\subset B_X$ is pointwise null on $S^*B_{Y^*}$.   This proves $Sz(TS)\leqslant Sz(S)$.

\end{proof}

In the next section, we will see a new application of the use of pointwise null indices to computing the Szlenk index of an operator.

\subsection{Third application: Direct sums}

Suppose $(X_n)$ is a sequence of Banach spaces and $U$ is a Banach space with normalized, $1$-unconditional basis $(e_n)$.  We denote by $\Bigl(\oplus_n X_n\Bigr)_U$ the space all sequences $(x_n)$ so that $x_n\in X_n$ and $\sum \|x_n\|e_n\in U$, and let $X$ denote this space with norm $\|(x_n)\|= \bigl\|\sum \|x_n\|e_n\bigr\|$. We also let $P_n:X\to X_n$ denote the operator which takes $(x_m)$ to $x_n$. More generally, for each $E\subset \nn$, we let $P_E=\sum_{n\in E}P_n$.  We have the following.    \begin{enumerate}[(i)] \item $X$ is a Banach space with this norm.  \item $X$ is separable if and only if $X_n$ is separable for each $n\in \nn$. \item If $(e_n)$ is a shrinking basis for $U$, $X^*=\bigl(\oplus_n X_n^*\bigr)_{U^*}$ isometrically.  \item If $(e_n)$ is shrinking, then a sequence $(s_n)\subset X$ is weakly null if and only if it is bounded and $(P_ms_n)_n$ is weakly null in $X_m$ for each $m\in \nn$. \end{enumerate}

\begin{theorem}If $E$ is a Banach space with normalized, $1$-unconditional basis $(e_n)$ and if $X_n$ is a sequence of separable spaces,  $$Sz(X)\leqslant \Bigl(\sup_n Sz(X_n)\Bigr) Sz(U).$$  
\label{infinitedirectsum}
\end{theorem}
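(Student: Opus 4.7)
The plan is to first make routine reductions so that a shrinking FDD appears. If $U^*$ is not separable then $Sz(U) = \omega_1$ and the stated bound is trivial; likewise if any $Sz(X_n)$ is uncountable. Under these assumptions $(e_n)$ is shrinking, the natural identification $X^* = \bigl(\oplus_n X_n^*\bigr)_{U^*}$ is isometric, and after a bimonotone renorming $(X_n)$ becomes a shrinking FDD for $X$ (the Szlenk index is preserved under renorming). Writing $\zeta = \sup_n Sz(X_n)$ and $\eta = Sz(U)$, Lemma \ref{dualize} reduces the task to proving $I_{B_{X^*}}(\mathcal{H}^X_\varepsilon) \leqslant \zeta\, \eta$ for every $\varepsilon > 0$.

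Fixing $\varepsilon > 0$ and assuming for contradiction that $I_{B_{X^*}}(\mathcal{H}^X_\varepsilon) > \zeta\, \eta$, I would choose regular families $\mathcal{F}, \mathcal{G}$ with $\iota(\mathcal{F}) = \eta$ and $\iota(\mathcal{G}) = \zeta$, so that by Proposition \ref{regular facts}(iii) the product satisfies $\iota(\mathcal{F}[\mathcal{G}]) = \zeta\, \eta$. Proposition \ref{prop5} then produces a weakly null tree $(x_E)_{E \in \widehat{\mathcal{F}[\mathcal{G}]}} \subset B_X$ whose branches lie in $\mathcal{H}^X_\varepsilon$, together with functionals $x_E^* \in B_{X^*}$ for each $E \in MAX(\mathcal{F}[\mathcal{G}])$ that pair at least $\varepsilon$ with every vertex on their branch. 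Using the shrinking FDD structure and the bottom-up pruning of weakly null trees (the first example following Lemma \ref{pruning lemma}), I would pass to a block tree of finitely supported vectors with an arbitrarily small quantitative loss, and decompose each $x_E^*$ under the identification $X^* = \bigl(\oplus_n X_n^*\bigr)_{U^*}$.

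I would then introduce a binary coloring on chains of $\mathcal{F}[\mathcal{G}]$ recording how the functional mass is distributed. For a chain $c$ lying on a branch $E \in MAX(\mathcal{F}[\mathcal{G}])$, assign color $1$ if at every vertex $F \in c$ there is some single FDD-coordinate $n$ with $|(P_n x_E^*)(P_n x_F)| \geqslant \varepsilon/4$, and color $0$ otherwise (that is, at some vertex of $c$ the functional mass is genuinely spread across the coordinates in the support of $x_F$). Applying Lemma \ref{coloring products} leaves two possibilities. In the first, an embedding $j \colon \mathcal{G} \to \mathcal{F}[\mathcal{G}]$ has all chains in $C(j(\mathcal{G}))$ colored $1$; an additional pigeon-hole on the dominant coordinate $n$, using Theorem \ref{PR} applied inductively down the tree, extracts a weakly null tree of order $\zeta$ inside some $B_{X_n}$ with branches in $\mathcal{H}^{X_n}_{\varepsilon/4}$, forcing $Sz(X_n) > \zeta$ against the definition of $\zeta$. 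In the second, an order-preserving $j \colon \widehat{\mathcal{F}} \to C(\mathcal{F}[\mathcal{G}])$ yields an $\mathcal{F}$-indexed family of spread chains; using the $U^*$-norm identity $\|x_E^*\|_{X^*} = \bigl\|\sum_n \|P_n x_E^*\| e_n^*\bigr\|_{U^*}$ and the unconditionality of $(e_n)$, one assembles from the spread functionals a weakly null tree in $B_U$ of order $\eta$, contradicting $Sz(U) = \eta$. The main obstacle lies in the coordinate pigeon-hole of the concentrated case: fixing a single $n$ along the tree while retaining order $\zeta$ requires iterating Theorem \ref{PR} compatibly with the regular-family structure, and the parallel argument extracting $U$-structure from the spread case demands careful bookkeeping with the unconditional decomposition.
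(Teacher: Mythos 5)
There is a genuine gap, and it lies exactly where you flag ``the main obstacle'': your dichotomy is the wrong one, and the concentrated case cannot be completed. First, the coloring itself is not well defined: a chain $c$ lies on many maximal branches, and your color depends on the choice of $x_E^*$ for a particular $E\in MAX(\fff[\mathcal{G}])$ extending $c$. More seriously, in the ``color $1$'' case the dominant coordinate $n$ varies from vertex to vertex, and no pigeon-hole can fix it: Theorem \ref{PR} applies to colorings of $MAX(\fff)$ by \emph{finitely many} colors, whereas your ``color'' (the index $n$ of the dominant coordinate) ranges over $\nn$ and, for a weakly null successor sequence, typically escapes to infinity. Indeed, a tree whose vectors are each supported on a \emph{single} coordinate $n_F\to\infty$ is monochromatically of your color $1$, yet yields no lower estimate in any fixed $X_n$; its $\ell_1^+$ behavior is governed entirely by $U$. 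So color $1$ does not force $Sz(X_n)>\sup_m Sz(X_m)$, and the first horn of your dichotomy is simply not contradictory. Symmetrically, in the ``color $0$'' case, knowing that the functional mass of $x_E^*$ is spread over at least two coordinates at some vertex gives no mechanism for producing vectors of $B_U$ with an $\ell_1^+$ lower estimate; nothing in your sketch manufactures block vectors in $U$.

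The correct dichotomy is between concentration on an \emph{initial segment} of coordinates and concentration on a tail, and this is what the paper's proof implements by mimicking the proof of Lemma \ref{coloring products} directly on the vectors (no functionals are ever decomposed). At each step of the recursion one fixes a threshold $n$: if every convex combination of $(x_{F^\frown G})_{G\in c}$ over chains $c$ beyond the threshold satisfied $\|P_{[1,n)}x\|\geqslant \varepsilon/3$, then $(P_{[1,n)}x_{j(G)})_G$ would be a weakly null tree of order $\sup_m Sz(X_m)$ in the \emph{finite} sum $\bigl(\oplus_{i<n}X_i\bigr)$, contradicting Theorem \ref{minkowski} ($Sz$ of a finite sum is the maximum); otherwise one selects a convex combination $u_{E^\frown k}$ with $\|P_{[1,n)}u_{E^\frown k}\|<\varepsilon/3$, truncates its tail, and records the interval $I(E^\frown k)$ on which it essentially lives, arranging these intervals to be successive. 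The branches of the $u$-tree are convex blocks of branches of the original tree, hence stay in $\mathcal{H}^X_\varepsilon$, and the vectors $y_E=\sum_{j\in I(E)}\|P_ju_E\|e_j$ form a block (hence weakly null) tree in $B_U$ with lower estimate $\varepsilon/3$ on all positive combinations along branches, contradicting $\zeta=I_w(\mathcal{H}^U_{\varepsilon/3})$. Your outline contains neither the finite-direct-sum contradiction nor the convex-combination-plus-interval-support device, and without them both horns of your argument are unsupported.
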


\begin{proof} If $U^*$ is non-separable or $X_n^*$ is non-separable for some $n\in \nn$, the result is clear.  Thus it is sufficient to assume that $(e_n)$ is shrinking, which means $X^*$ is separable, and it is sufficient to estimate the $\ell_1^+$ weakly null index.  Let $\xi=\sup_n Sz(X_n^*)$ and $\zeta=I_w(\mathcal{H}^U_{\varepsilon/3})$.  Seeking a contradiction, suppose $I_w(\mathcal{H}^X_\varepsilon)>\xi\zeta$.  Let $(x_E)_{E\in \widehat{\fff_\zeta[\fff_\xi]}}\subset B_X $ be a weakly null tree with branches in $\mathcal{H}^X_\varepsilon$.  Mimicking the proof of Lemma \ref{Coloring theorem for products}, we will recursively construct $r:\widehat{\fff}_\zeta\to C(\fff_\xi)$, $I:\widehat{\fff}_\zeta\to \fin$, $u_E\in B_X$ so that for all $E\in \widehat{\fff}_\zeta$, letting $F_i=\max r(E|_i)$ for each $1\leqslant i\leqslant |E|$ and $F=\cup_{i=1}^{|E|-1} F_i$, \begin{enumerate}[(i)] \item $u_E\in \text{co}(x_{F\verb!^!G}:G\in r(E))$, \item $\|u_E-P_{I(E)}(u_E)\|< 2\varepsilon/3$, \item $(\min F_i)_{i=1}^{|E|}$ is a spread of $E$, \item if $E\prec F\in \fff_\zeta$, $I(E)<I(F)$, \item if $E\verb!^!k, E\verb!^!l\in \fff_\zeta$ with $k<l$, $I(E\verb!^!k)< I(E\verb!^!l)$. \end{enumerate} We then let $j(E)=\{F\verb!^!G: G\in r(E)\}$ to obtain the indicated order preserving function.

For $E\in \fff_\zeta$, we must define $r(E), I(E), u_E$ assuming that $r(F), I(F), u_F$ has been defined for each $\varnothing \prec F\prec E$.  Let $m_0\in \nn$ be minimal so that $E\verb!^!m_0\in \fff_\zeta$.  We will recursively define $r(E\verb!^!m), I(E\verb!^!m), u_{E\verb!^!m}$ for each $m\geqslant m_0$.  Assume that for some $k\geqslant m_0$, these have been defined for each $m_0\leqslant m<k$.  Let $F_i=\max r(E|_i)$ and $F=\cup_{i=1}^{|E|}F_i$.  Fix $n$ so that $F<n$, $I(E)<n$, and $(\min F_i)_{i=1}^{|E|}\verb!^!n\in \fff_\zeta$. This can be done since $(\min F_i)_{i=1}^{|E|}$ is a spread of $E$, which is non-maximal in $\fff_\zeta$.  In the case that $k>m_0$, assume also that $n> I(E\verb!^!(k-1))$. Define $j:\fff_\xi\to \fff_\zeta[\fff_\xi]$ by $$j(G)=F\verb!^!(n+i: i\in G).$$  If for each $c\in C(\fff_\xi\cap(n, \infty)^{<\omega})$, $$\inf \{\|P_{[1, n)}x\|: x\in \text{co}(x_{F\verb!^!G}: G\in c)\}\geqslant \varepsilon/3,$$ then $(P_{[1, n)}x_{j(G)})_{G\in \widehat{\fff}_\xi}\subset B_{\oplus_{i=1}^n X_i}$ is a weakly null tree with branches in $\mathcal{H}^X_{\varepsilon/3}$.  But this would mean that $$\underset{1\leqslant i\leqslant n}{\max} Sz(X_i)=Sz\Bigl(\oplus_{i=1}^n X_i\Bigr)>\xi,$$ a contradiction.  Thus we can find some $c\in C(\fff_\xi\cap (n, \infty))$ so that $$\inf \{\|P_{[1, n)}x\|: x\in \text{co}(x_{F\verb!^!G}): G\in c\}< \varepsilon/3.$$  Let $r(E\verb!^!k)=c$.  Let $u_{E\verb!^!k}\in \text{co}(x_{F\verb!^!G}:G\in c)$ be such that $\|P_{[1, n)}u_{E\verb!^!k}\|< \varepsilon/3$.  Choose $l\in \nn$ so that $\|P_{(l, \infty)}u_{E\verb!^!k}\|< \varepsilon/3$ and let $I(E\verb!^!k)=[n,l]$.  This completes the recursive construction.  

Next, note that since $j$ is order preserving, $(u_{E|_i})_{i=1}^{|E|}$ is a convex block of a member of $\mathcal{H}^X_\varepsilon$, and thus is a member of $\mathcal{H}^X_\varepsilon$.  Let $y_E= \sum_{j\in I(E)}\|P_j (u_E)\|e_j$, so that $$\|y_E\|= \Bigl\|\sum_{j\in I(E)} \|P_j(u_E)\|e_j\Bigr\| = \|P_{I(E)}(u_E)\|\leqslant \|u_E\|\leqslant 1,$$ and, for any $(a_i)_{i=1}^{|E|}\subset [0, \infty)$, \begin{align*} \Bigl\|\sum_{i=1}^{|E|}a_i y_{E|_i}\Bigr\| & = \Bigl\|\sum_{i=1}^{|E|}\sum_{j\in I(E|_i)} a_i\|P_j(u_{E|_i})\|e_j\Bigr\| \\ & = \Bigl\|\sum_j \bigl\|P_j\bigl(\sum_{i=1}^{|E|} a_iP_{I(E|_i)}(u_{E|_i})\bigr)\bigr\|e_j\Bigr\| \\ & = \Bigl\|\sum_{i=1}^{|E|}a_iP_{I(E|_i)}(u_{E|_i})\Bigr\| \\ & \geqslant \Bigl\|\sum_{i=1}^{|E|} a_iu_{E|_i}\Bigr\| - \sum_{i=1}^{|E|}a_i\|u_{E|_i}-P_{I(E|_i)}u_{E|_i}\| \\ & \geqslant (\varepsilon-2\varepsilon/3)\sum_{i=1}^{|E|}a_i = \varepsilon/3\sum_{i=1}^{|E|} a_i. \end{align*}

But $(y_E)_{E\in \widehat{\fff}_\zeta}\subset B_U$ is a block tree, and therefore a weakly null tree. We deduce $I_w(\mathcal{H}^U_{\varepsilon/3})>\zeta$, a contradiction.

\end{proof}

\begin{remark} The result above is optimal in certain cases.  Recall that for $\xi<\omega_1$, the \emph{Schreier space of order} $\xi$, denoted $X_\xi$, is the completion of $c_{00}$ under the norm $$\|x\|_{X_\xi}=\sup_{E\in \sss_\xi}\|P_Ex\|_{\ell_1}.$$  It is known that $Sz(X_\xi)=\omega^{\xi+1}$ \cite{Causey1}.  Fix $\zeta, \xi<\omega_1$ and let $X=(\oplus X_\zeta)_{X_\xi}$.  That is, each member of the sequence of spaces is equal to $X_\zeta$.  Let $(e^n_i)_i$ denote the basis of the space $X_\zeta$ which sits in the $n^{th}$ position in the direct sum.  For $E\in \widehat{\sss_\xi[\sss_\zeta]}$, let $(E_i)_{i=1}^k$ be the standard decomposition of $E$ with respect to $X_\zeta$.  Then let $x_E=e^{\min E_k}_{\max E_k}$.  Then $(x_E)_{E\in \widehat{\sss_\xi[\sss_\zeta]}}\subset B_X$ is weakly null.  Moreover, if $\varnothing \prec E\in \sss_\xi[\sss_\zeta]$ and if $(a_i)_{i\in E}$ are any scalars, then letting $(E_i)_{i=1}^k$ denote the standard decomposition of $E$ with respect to $\sss_\zeta$ and letting $F_i=\cup_{j=1}^i E_j$, \begin{align*} \Bigl\|\sum_{F\preceq E} a_F x_F\Bigr\|_X & \geqslant \sum_{i=1}^k \bigl\|P_{\min E_i} \sum_{F\preceq E} a_Fx_F\bigr\|_{X_\zeta} = \sum_{i=1}^k\bigl\|\sum_{F_{i-1}\prec F\preceq F_i} a_F e^{\min E_i}_{\max F}\|_{X_\zeta} \\ & = \sum_{i=1}^k \sum_{F_{i-1}\prec F\preceq F_i} |a_F|=\sum_{F\preceq E}|a_F|.\end{align*} Thus $Sz(X)>\iota(\sss_\xi[\sss_\zeta])=\omega^{\zeta+\xi}$.  If $\xi$ is infinite, $\zeta+1+\xi+1=\zeta+\xi+1$, so the estimate given by Theorem \ref{infinitedirectsum} of $\omega^{\zeta+\xi+1}$ is optimal in this case.

\end{remark}

\begin{remark} Suppose $U,V$ are Banach spaces with normalized, shrinking, $1$-unconditional bases $(u_n), (v_n)$, respectively, so that the operator $I_{U,V}:U\to V$ defined by $I_{U,V}u_n=v_n$ is bounded.  Suppose that we have two sequences $X_n, Y_n$ of separable Banach spaces and a uniformly bounded sequence of operators $T_n:X_n\to Y_n$.  Then we can define an operator $T:(\oplus X_n)_U\to (\oplus Y_n)_V$ by $T(x_n)=(T_ny_n)$.  An inessential modification of the preceding proof yields that $Sz(T)\leqslant (\sup_n Sz(T_n))Sz(I_{U,V})$.

\end{remark}

\subsection{Fourth application: Constant reduction}

The following argument is a modification of a well-known argument due to James \cite{James}.  Essentially, it is implicitly contained within \cite{AJO}. However, we need more precise quantification than was given there, so we provide a proof.   Suppose $(x_i)_{i=1}^{n^2}\subset B_X$ is such that each convex combination of these points has norm at least $\varepsilon^2$.  We partition $\{1, \ldots, n^2\}$ into successive intervals $I_1<\ldots <I_n$, each having cardinality $n$, and consider two cases.  Either for some $1\leqslant i\leqslant n$, all convex combinations of $(x_j)_{j\in I_i}$ have norm at least $\varepsilon$, or for each $1\leqslant i\leqslant n$, we can find a convex combination $y_i=\sum_{j\in I_i}a_jx_j$ of $(x_j)_{j\in I_i}$ so that $\|y_i\|<\varepsilon$.  Then $(\varepsilon^{-1} z_i)_{i=1}^n\subset B_X$, by homogeneity, has the property that each convex combination of this sequence has norm at least $\varepsilon$.  Thus in either case, we have found in $B_X$ a multiple of a convex block of $(x_i)_{i=1}^{n^2}$ having length $n$ and so that each convex combination of the resulting sequence has norm at least $\varepsilon$.  

Below, we view a tree of order $\xi^2$ as being composed of a tree of order $\xi$, with vertices each being a tree of order $\xi$.  We will again consider two cases: One of these ``interior'' trees will already have the lower $\varepsilon$ estimate on all of its branches, or we can replace each of these trees with a ``bad'' convex combination so that, after being multiplied by $\varepsilon^{-1}$, these ``bad'' combinations will form a tree of size $\xi$ having the appropriate $\varepsilon$ lower estimates on all convex combinations of all branches.

\begin{theorem} For $\delta,\varepsilon\in (0,1)$ and a Banach space $X$ having separable dual, $$I_w(\mathcal{H}^X_{\varepsilon \delta})\leqslant I_w(\mathcal{H}^X_\varepsilon)I_w(\mathcal{H}^X_\delta).$$  If $I_w(\mathcal{H}^X_\varepsilon)>\omega^{\omega^\xi}$ for some $\xi$, then, $I_w(\mathcal{H}^X_\delta)>\omega^{\omega^\xi}$.  In particular, if $\eta<\omega_1$ is a limit ordinal, $Sz(X)\neq \omega^{\omega^\eta}$. 

Moreover, if $\eta<\omega_1$ is any limit ordinal, and if $Y$ is any Banach space, $Sz(Y)\neq \omega^{\omega^\eta}$.   \label{constant reduction}
\end{theorem}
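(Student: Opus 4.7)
The plan follows a transfinite version of James's averaging dichotomy, in the spirit of the direct-sum argument of Theorem~\ref{infinitedirectsum}. Set $\alpha=I_w(\mathcal{H}^X_\varepsilon)$ and $\beta=I_w(\mathcal{H}^X_\delta)$. For the first inequality, suppose $I_w(\mathcal{H}^X_{\varepsilon\delta})>\alpha\beta$; Proposition~\ref{prop5} combined with the identity $\iota(\mathcal{F}_\beta[\mathcal{F}_\alpha])=\alpha\beta$ from Proposition~\ref{regular facts}(iii) produces a weakly null tree $(x_F)_{F\in\widehat{\mathcal{F}_\beta[\mathcal{F}_\alpha]}}\subset B_X$ with branches in $\mathcal{H}^X_{\varepsilon\delta}$, which pruning renders strongly weakly null. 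Define the two-coloring $f:C(\mathcal{F}_\beta[\mathcal{F}_\alpha])\to\{0,1\}$ by $f(c)=0$ iff some convex combination of $(x_F)_{F\in c}$ has norm strictly less than $\delta$, and apply Lemma~\ref{coloring products}. If its hypothesis fails, an embedding $j:\mathcal{F}_\alpha\to\mathcal{F}_\beta[\mathcal{F}_\alpha]$ has $f\equiv1$ on every chain in its image, so each chain's closed convex hull is separated from the open $\delta$-ball by a common Hahn--Banach functional, yielding a weakly null tree of order $\alpha$ with branches in $\mathcal{H}^X_\delta$. Otherwise, the conclusion provides order-preserving $j':\widehat{\mathcal{F}_\beta}\to C(\mathcal{F}_\beta[\mathcal{F}_\alpha])$; selecting convex combinations $u_E$ of norm less than $\delta$ and setting $z_E=u_E/\delta\in B_X$ gives a weakly null tree of order $\beta$, whose branches lie in $\mathcal{H}^X_\varepsilon$ since the $\mathcal{H}^X_{\varepsilon\delta}$ witness on the concatenated chain $\bigcup_i j'(E|_i)$, which extends to a branch of the original tree, satisfies $x^*(z_{E|_i})\geq\varepsilon$. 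A symmetric pass interchanging $\varepsilon$ and $\delta$ (and replacing $\mathcal{F}_\beta[\mathcal{F}_\alpha]$ with $\mathcal{F}_\alpha[\mathcal{F}_\beta]$) ensures that whichever of $\alpha\leq\beta$ or $\beta\leq\alpha$ holds, one of the extracted trees overshoots the corresponding index in order, a contradiction.

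The second and third assertions follow algebraically. Iterating the first inequality gives $I_w(\mathcal{H}^X_{\delta^n})\leq I_w(\mathcal{H}^X_\delta)^n$, and $\mathcal{H}^X_\varepsilon\subset\mathcal{H}^X_{\delta^n}$ for $\delta^n<\varepsilon$ yields $I_w(\mathcal{H}^X_\varepsilon)\leq I_w(\mathcal{H}^X_\delta)^n$. A Cantor-normal-form check shows $\{\gamma:\gamma<\omega^{\omega^\xi}\}$ is closed under ordinal multiplication for every $\xi<\omega_1$, so $I_w(\mathcal{H}^X_\delta)<\omega^{\omega^\xi}$ forces $I_w(\mathcal{H}^X_\varepsilon)<\omega^{\omega^\xi}$, establishing the constant reduction. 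Now suppose $X$ has separable dual and $Sz(X)=\omega^{\omega^\eta}$ with $\eta<\omega_1$ a limit ordinal. Since $Sz(X)>1$, $B_{X^*}$ is not norm-compact, so by the non-attainment property of the Szlenk supremum each $I_w(\mathcal{H}^X_\delta)<\omega^{\omega^\eta}$. On the other hand $\omega^{\omega^\eta}=\sup_{\zeta<\eta}\omega^{\omega^\zeta}$, so for each $\zeta<\eta$ some $\varepsilon$ satisfies $I_w(\mathcal{H}^X_\varepsilon)>\omega^{\omega^\zeta}$; constant reduction propagates this to every $\delta$, giving $I_w(\mathcal{H}^X_\delta)\geq\omega^{\omega^\zeta}$ for every $\delta$ and every $\zeta<\eta$, hence $I_w(\mathcal{H}^X_\delta)\geq\omega^{\omega^\eta}$ for every $\delta$, contradicting non-attainment.

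For arbitrary $Y$: if $Y^*$ is not norm separable then $Sz(Y)=\omega_1\neq\omega^{\omega^\eta}$ for any countable $\eta$; if $Y^*$ is separable, then so is $Y$ (as $Y$ embeds isometrically in $C(B_{Y^*})$, which is separable since $B_{Y^*}$ is $w^*$-metrizable), and the separable-dual case handled above applies directly. The main obstacle is the asymmetry between $\alpha$ and $\beta$ in the first step: the bare dichotomy in one orientation of Lemma~\ref{coloring products} extracts trees of order $\alpha$ and $\beta$ respectively in the two cases, which do not simultaneously contradict both indices; the fix is to run the dichotomy in both orientations, interchanging $\varepsilon$ and $\delta$ and swapping $\mathcal{F}_\beta[\mathcal{F}_\alpha]$ with $\mathcal{F}_\alpha[\mathcal{F}_\beta]$, so that the appropriate case delivers a contradiction in either of the two possible orderings of $\alpha$ and $\beta$.
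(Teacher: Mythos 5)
Your overall architecture is the same as the paper's (tree indexed by a product family, chains colored by whether they admit a small-norm convex combination, Lemma \ref{coloring products}, Hahn--Banach separation in the monochromatic case, rescaling in the other, then iteration of the inequality), but the core step has a genuine gap caused by a calibration mismatch: you take the inner family $\mathcal{F}_\alpha$ with $\alpha=I_w(\mathcal{H}^X_\varepsilon)$ but color with threshold $\delta$. Then the ``all chains colored $1$'' case only yields an order-$\alpha$ weakly null tree with branches in $\mathcal{H}^X_\delta$, i.e.\ $I_w(\mathcal{H}^X_\delta)>\alpha$, while the other case yields $I_w(\mathcal{H}^X_\varepsilon)>\beta$; neither statement is contradictory unless the relative order of $\alpha$ and $\beta$ is known, and your symmetric second pass does not repair this. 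The swapped orientation produces exactly the same disjunction: its two cases assert $I_w(\mathcal{H}^X_\varepsilon)>\beta$ and $I_w(\mathcal{H}^X_\delta)>\alpha$ respectively, so if, say, $\alpha>\beta$ in truth, both passes may legitimately land in the case asserting $I_w(\mathcal{H}^X_\varepsilon)>\beta$, which is simply true and yields no contradiction. In addition, the second pass needs a weakly null tree indexed by $\mathcal{F}_\alpha[\mathcal{F}_\beta]$, whose order is $\beta\alpha$, and the hypothesis $I_w(\mathcal{H}^X_{\varepsilon\delta})>\alpha\beta$ need not supply it, since ordinal multiplication is not commutative. The repair is the paper's calibration: fix $\xi=I_w(\mathcal{H}^X_\varepsilon)$, let the outer order $\zeta$ be \emph{arbitrary}, and declare $f(c)=0$ when some convex combination of $(x_E)_{E\in c}$ has norm less than $\varepsilon$ (not $\delta$). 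Then the monochromatic-$1$ case gives an order-$\xi$ tree with branches in $\mathcal{H}^X_\varepsilon$, i.e.\ $I_w(\mathcal{H}^X_\varepsilon)>\xi=I_w(\mathcal{H}^X_\varepsilon)$, an outright contradiction; hence Lemma \ref{coloring products} applies, and rescaling the small combinations by $\varepsilon^{-1}$ gives an order-$\zeta$ tree with branches in $\mathcal{H}^X_\delta$. This shows $I_w(\mathcal{H}^X_{\varepsilon\delta})>\xi\zeta$ implies $I_w(\mathcal{H}^X_\delta)>\zeta$ for every $\zeta$, and taking $\zeta=I_w(\mathcal{H}^X_\delta)$ gives the inequality with no case analysis on $\alpha$ versus $\beta$.

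A secondary but real error is your treatment of arbitrary $Y$: it is not true that a non-separable dual forces $Sz(Y)=\omega_1$ (for instance $c_0(\Gamma)$ with $\Gamma$ uncountable has Szlenk index $\omega$ and non-separable dual; the equivalence of countable Szlenk index with separable dual is only valid for separable spaces). What is needed, and what the paper uses, is Lancien's theorem that a countable Szlenk index is separably determined, which supplies a separable subspace $X\leqslant Y$ with $Sz(X)=\omega^{\omega^\eta}$ and reduces to the separable-dual case. Finally, in the second assertion your contrapositive only excludes $I_w(\mathcal{H}^X_\delta)<\omega^{\omega^\xi}$; to get the strict inequality you should also rule out equality, which the paper does by observing that $I_w(\mathcal{H}^X_\delta)$ is always a successor ordinal, whereas your appeal to non-attainment of the Szlenk supremum concerns $\sup_\varepsilon Sz_\varepsilon$ rather than the weakly null index and would need the dualization lemma to transfer.
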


\begin{proof} Let $\xi=I_w(\mathcal{H}^X_\varepsilon)$. Fix $0<\zeta<\omega_1$. Assume that $I_w(\mathcal{H}^X_{\varepsilon\delta})>\xi\zeta$.  Then we can find a strongly weakly null tree $(x_E)_{E\in\widehat{\fff_\zeta[\fff_\xi]}}\subset B_X$ the branches of which lie in $\mathcal{H}^X_{\varepsilon\delta}$.  We define a coloring on $C(\widehat{\fff_{\zeta}[\fff_\xi]})$ by letting $f(c)=0$ provided there exists a convex combination of $(x_E)_{E\in c}$ which has norm less than $\varepsilon$, and color $1$ otherwise.  If there exists an embedding $i:\widehat{\fff}_\xi\to \widehat{\fff_\zeta[\fff_\xi]}$ so that each $c\in C(i(\widehat{\fff}_\xi))$ receives color $1$, then $(x_{i(E)})_{E\in \widehat{\fff}_\xi}$ witnesses the fact that $I_w(\mathcal{H}_\varepsilon^X)>\xi$, a contradiction.  Therefore for each embedded tree $i(\widehat{\fff}_\xi)$, some branch of this embedded tree receives color $0$.  Applying Lemma \ref{Coloring theorem for products}, we obtain an order preserving $j:\widehat{\fff}_\zeta\to C(\fff_\zeta[\fff_\xi])$ so that for each $E\in \widehat{\fff}_\zeta$, $j(E)$ receives color $0$.  Letting $y_E$ be a convex combination of $(x_F)_{F\in j(E)}$ with norm less than $\varepsilon$, we obtain a weakly null tree $(y_E)_{E\in \widehat{\fff}_\zeta}$. This tree is weakly null because the original tree was strongly weakly null.  Since $j$ is order preserving, $(y_{E|_i})_{i=1}^{|E|}$ is a convex block of a member of $\mathcal{H}^X_{\varepsilon\delta}$, and therefore lies in $\mathcal{H}^X_{\varepsilon\delta}$.  Then by homogeneity, $(\varepsilon^{-1}y_E)_{E\in \widehat{\fff}_\zeta}\subset B_X$ is a weakly null tree with branches in $\mathcal{H}^X_\delta$.  This means $I_w(\mathcal{H}^X_\delta)>\zeta$, which proves the first inequality.    

Suppose $I_w(\mathcal{H}^X_\varepsilon)>\omega^{\omega^\xi}$ for some $\xi$.  Fix $\zeta<\omega^{\omega^\xi}$.  Choose $n\in \nn$ so that $\varepsilon^{1/n}>\delta$.  Note that $\zeta^n<\omega^{\omega^\xi}$, so $I_w(\mathcal{H}^X_\varepsilon)>\zeta^n$.  By applying the first inequality, we deduce $I_w(\mathcal{H}^X_\delta)>\zeta$.  Since $\zeta<\omega^{\omega^\xi}$ was arbitrary, $I_w(\mathcal{H}_\delta^X)\geqslant \omega^{\omega^\xi}$. But since $I_w(\mathcal{H}^X_\delta)$ is always a successor, $I_w(\mathcal{H}^X_\delta)>\omega^{\omega^\xi}$.  

If $Sz(X)=\omega^{\omega^\eta}$, then $X^*$ is separable.  This means that for $\zeta<\eta$, $I_w(\mathcal{H}^X_\varepsilon)>\omega^{\omega^\zeta}$ for some $\varepsilon\in (0,1)$, and by the preceding part, $I_w(\mathcal{H}^X_{1/2})> \omega^{\omega^\beta}$.  But since this holds for any $\zeta<\eta$, $I_w(\mathcal{H}^X_{1/2})\geq \sup_{\zeta<\eta} \omega^{\omega^\zeta}=\omega^{\omega^\eta}$.  Again, since $I_w(\mathcal{H}^X_{1/2})$ is a successor, this must be a strict inequality, which means $Sz(X)>\omega^{\omega^\eta}$.  

For the last statement, we cite a result of Lancien \cite{La1} which states that if the Szlenk index of a Banach space is countable, it is separably determined.  Therefore if there existed a Banach space $Y$ with $Sz(Y)=\omega^{\omega^\eta}$, $\eta$ countable, then $Y$ would have a separable subspace $X$ with $Sz(X)=\omega^{\omega^\eta}$. But this means $X^*$ is separable, which means $Sz(X)=\omega^{\omega^\eta}$ is impossible.  

\end{proof}

\subsection{Fifth application: Three-space properties}

Given our dualization lemma, the following theorem can be shown to be equivalent to Proposition $2.1$ of \cite{BL} in the case of a Banach space having separable dual, up to the value of certain constants.  There, however, the result was shown using the usual definition of Szlenk index involving slicing the dual ball, whereas we use only the weakly null $\ell_1^+$ index.

\begin{theorem} For any $\varepsilon\in (0, 1/3)$, any Banach space $X$ having separable dual, and any closed subspace $Y\leqslant X$, $$I_w(\mathcal{H}^X_\varepsilon)\leqslant I_w(\mathcal{H}^{X/Y}_{\varepsilon/5})I_w(\mathcal{H}^Y_{\varepsilon/5}).$$  In particular, for any ordinal $\xi<\omega_1$, $Sz(\cdot)\leqslant \omega^{\omega^\xi}$ and $Sz(\cdot)< \omega^{\omega^\xi}$ are three space properties on the class of separable Banach spaces.  

\end{theorem}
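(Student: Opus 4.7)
The plan is to prove the displayed inequality by a coloring argument on $\fff_\zeta[\fff_\xi]$, then deduce the three-space property by ordinal arithmetic. Set $\xi=I_w(\mathcal{H}^{X/Y}_{\varepsilon/5})$ and $\zeta=I_w(\mathcal{H}^Y_{\varepsilon/5})$, and suppose for contradiction that $I_w(\mathcal{H}^X_\varepsilon)>\xi\zeta=\iota(\fff_\zeta[\fff_\xi])$ (Proposition \ref{regular facts}(iii)). Using Lemma \ref{dualize} together with the pruning techniques of Section 3.3, fix a strongly weakly null tree $(x_E)_{E\in\widehat{\fff_\zeta[\fff_\xi]}}\subset B_X$ with branches in $\mathcal{H}^X_\varepsilon$. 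Let $Q:X\to X/Y$ denote the quotient map and define a coloring $f:C(\fff_\zeta[\fff_\xi])\to\{0,1\}$ by declaring $f(c)=0$ iff some convex combination of $(Qx_E)_{E\in c}$ has norm strictly less than $\varepsilon/5$ in $X/Y$, and $f(c)=1$ otherwise.

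Apply Lemma \ref{coloring products}. If there is an embedding $j:\fff_\xi\to\fff_\zeta[\fff_\xi]$ on which $f\equiv 1$, Hahn-Banach separation supplies, for each $c\in C(j(\fff_\xi))$, a functional $x^*\in B_{(X/Y)^*}$ with $x^*(Qx_E)\geqslant\varepsilon/5$ on $c$; since $Q$ is weak-to-weak continuous, $(Qx_{j(E)})_{E\in\widehat{\fff_\xi}}$ is a strongly weakly null tree in $B_{X/Y}$ with every branch in $\mathcal{H}^{X/Y}_{\varepsilon/5}$, contradicting the definition of $\xi$. Otherwise, the lemma yields an order-preserving $j:\widehat{\fff_\zeta}\to C(\fff_\zeta[\fff_\xi])$ with $f\circ j\equiv 0$; choose $u_E\in\text{co}(x_F:F\in j(E))$ with $\|Qu_E\|_{X/Y}<\varepsilon/5$. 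The segments $j(E\verb!^!n)$ for varying $n$ are pairwise disjoint (they are assembled from chains starting at strictly increasing minima in the proof of Lemma \ref{coloring products}), so strong weak nullness of $(x_E)$ forces $(u_E)_{E\in\widehat{\fff_\zeta}}$ to be a weakly null tree in $B_X$; order-preservation of $j$ also ensures each branch $(u_{E|_i})_{i=1}^{|E|}$ is a convex blocking of a branch of $(x_E)$ and hence still lies in $\mathcal{H}^X_\varepsilon$.

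Because $X^*$ is separable, $X$ contains no copy of $\ell_1$, so Proposition \ref{perturb}(i) applies with $\delta=\varepsilon/5$ to each interior sequence $(u_{E\verb!^!n})_n$; simultaneous pruning via Lemma \ref{pruning lemma} and relabeling yield a weakly null tree $(y_E)_{E\in\widehat{\fff_\zeta}}\subset B_Y$ with $\|u_E-y_E\|<4\varepsilon/5$ for every $E$. Restricting the witnessing $x^*\in B_{X^*}$ to $Y$ gives $(x^*|_Y)(y_{E|_i})\geqslant\varepsilon-4\varepsilon/5=\varepsilon/5$, so every branch of $(y_E)$ lies in $\mathcal{H}^Y_{\varepsilon/5}$, contradicting the definition of $\zeta$ and establishing the inequality.

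For the three-space corollary, suppose $Sz(Y),Sz(X/Y)\leqslant\omega^{\omega^\xi}$; then $Y^*$ and $(X/Y)^*$ are separable, hence so is $X^*$ (by the standard three-space result for dual separability). Since the supremum defining $Sz$ is not attained on a convex $w^*$-compact set, each of $I_w(\mathcal{H}^Y_{\varepsilon/5})$ and $I_w(\mathcal{H}^{X/Y}_{\varepsilon/5})$ is strictly below $\omega^{\omega^\xi}$ for every $\varepsilon$, and because $\omega^\xi$ is additively indecomposable, $\omega^{\omega^\xi}$ is closed under ordinal multiplication; the displayed inequality combined with Lemma \ref{dualize} yields $Sz(X)\leqslant\omega^{\omega^\xi}$. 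The strict version is analogous: pick $\gamma<\omega^\xi$ with $Sz(Y),Sz(X/Y)\leqslant\omega^\gamma$ and observe $I_w(\mathcal{H}^X_\varepsilon)\leqslant\omega^\gamma\cdot\omega^\gamma=\omega^{\gamma+\gamma}<\omega^{\omega^\xi}$ for every $\varepsilon$, so $Sz(X)<\omega^{\omega^\xi}$. The main obstacle is the dichotomy step: in the second alternative we need the convex-combination tree $(u_E)$ to be genuinely weakly null (which forces starting with a \emph{strongly} weakly null tree) and we need to replace each $u_E$ by a vector in $B_Y$ without destroying either the $\mathcal{H}^X_\varepsilon$-branch estimate or the tree structure, which is precisely what Proposition \ref{perturb}(i) delivers through Lemma \ref{pruning lemma} at the cost of an additive loss of $4\varepsilon/5$.
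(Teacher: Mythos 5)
Your proposal is correct and follows essentially the same route as the paper's own proof: a two-coloring of chains of $\fff_\zeta[\fff_\xi]$ resolved by the coloring lemma for products, with the first alternative contradicting $I_w(\mathcal{H}^{X/Y}_{\varepsilon/5})$ via Hahn--Banach separation and the second alternative pushed into $Y$ by Proposition \ref{perturb}(i) together with the pruning lemma, followed by the same ordinal arithmetic for the three-space consequence. Your explicit remarks on the disjointness of the chains $j(E\verb!^!n)$ and on the $0/1$ convention of the coloring are welcome clarifications of points the paper leaves implicit.
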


\begin{proof} Fix a Banach space $X$ having separable dual, $\varepsilon\in (0,1/3)$, and $Y\leqslant X$.  Let $Q:X\to X/Y$ denote the quotient map. Let $\xi=I_w(\mathcal{H}^{X/Y}_{\varepsilon/5})$ and $\zeta=I_w(\mathcal{H}^Y_{\varepsilon/5})$.  If $I_w(\mathcal{H}^X_\varepsilon)>\xi\zeta$, we can find a strongly weakly null tree $(x_E)_{E\in \widehat{\fff_\zeta[\fff_\xi]}}\subset S_X$ with branches in $\mathcal{H}^X_\varepsilon$.  Define the coloring $f$ on $C(\fff_\zeta[\fff_\xi])$ by letting $f(c)=0$ provided that for each convex combination $x$ of $(x_E)_{E\in c}$, $\|Qx\|_{X/Y}\geqslant \varepsilon/5$.  If there exists an embedding $i:\widehat{\fff}_\xi\to \widehat{\fff_\zeta[\fff_\xi]}$ so that $f(c)=1$ for all $c\in C(i(\widehat{\fff}_\xi))$, then $(Qx_{i(E)})_{E\in \widehat{\fff}_\xi}\subset B_{X/Y}$ is a weakly null tree witnessing the fact that $I_w(\mathcal{H}^{X/Y}_{\varepsilon/5})>\xi$, a contradiction.  Therefore we apply Lemma \ref{Coloring theorem for products} to obtain an order preserving $j:\widehat{\fff}_\zeta\to \widehat{\fff_\zeta[\fff_\xi]}$ so that $f\circ j\equiv 0$.  For each $E\in \widehat{\fff}_\zeta$, we let $z_E$ be a convex combination of $(x_F)_{F\in j(E)}$ so that $\|Qz_E\|_{X/Y}<\varepsilon/5$.  For each $E\in \widehat{\fff}_\zeta$, $(z_{E|_i})_{i=1}^{|E|}$ is a convex block of a member of $\mathcal{H}^X_\varepsilon$, and is therefore also a member of $\mathcal{H}^X_\varepsilon$. 

For $E\in \fff_\zeta'$, Proposition \ref{perturb} gives that there exists a subsequence $(z_{E\verb!^!k_n})$ of $(z_{E\verb!^!n})_{E<n}$, and a weakly null sequence $(y_n)_{E<n}\subset B_Y$ so that $\|z_{E\verb!^!n}-y_n\|<4\varepsilon/5$.  By Lemma \ref{pruning lemma}, we can find a pruned subtree $(u_E)_{E\in \widehat{\fff}_\zeta}$ of $(z_E)_{E\in \widehat{\fff}_\zeta}$ and a weakly null tree $(y_E)_{E\in \widehat{\fff}_\zeta}\subset B_Y$ so that for each $E\in \widehat{\fff}_\zeta$, $\|u_E-y_E\|<4\varepsilon/5$.  For each $E\in \widehat{\fff}_\xi$, since $(u_{E|_i})_{i=1}^{|E|}\in \mathcal{H}^X_\varepsilon$, there exists $f\in B_{X^*}$ so that $f(u_{E|_i})\geqslant \varepsilon$ for each $1\leqslant i\leqslant |E|$.  Then for such $i$, $f(y_{E|_i})\geqslant f(u_{E|_i})- \|u_E-y_E\| >\varepsilon-4\varepsilon/5=\varepsilon/5$.  Thus $(y_E)_{E\in \widehat{\fff}_\zeta}\subset B_Y$ witnesses the fact that $I_w(\mathcal{H}^Y_{\varepsilon/5})>\zeta$, a contradiction.  This proves the first statement.

For the second and third parts, assume $Sz(Y), Sz(X/Y)\leqslant \omega^{\omega^\xi}$.  This means $Y^*, (X/Y)^*$, and therefore $X^*$, are separable.  Moreover, for each $\varepsilon\in (0,1/3)$, $$I_w(\mathcal{H}^X_\varepsilon)\leqslant I_w(\mathcal{H}^{X/Y}_{\varepsilon/5})I_w(\mathcal{H}^Y_{\varepsilon/5})<\omega^{\omega^\xi},$$ since $I_w(\mathcal{H}^{X/Y}_{\varepsilon/5}),I_w(\mathcal{H}^Y_{\varepsilon/5})<\omega^{\omega^\xi}$.  Since this holds for all $\varepsilon$, $Sz(X)\leqslant \omega^{\omega^\xi}$.  Moreover, if $Sz(Y)$, $Sz(X/Y)<\omega^{\omega^\xi}$, then $Sz(X/Y)Sz(Y)<\omega^{\omega^\xi}$, and $$\sup_{\varepsilon\in (0,1/3)}I_w(\mathcal{H}^X_\varepsilon) \leqslant Sz(X/Y)Sz(Y)<\omega^{\omega^\xi}.$$

\end{proof}

\section{Classes of Banach spaces with bounded Szlenk index}

\subsection{Mixed Tsirelson spaces}

For our purposes, mixed Tsirelson spaces are a remarkably useful class of spaces for providing examples with prescribed $\ell_1$ behavior.  For example, given a sequence of countable ordinals $\xi_n\nearrow \omega^\xi$ and constants $1\geqslant \theta_n\searrow 0$, does there exist a Banach space $X$ so that $\omega^\xi>I_w(\mathcal{H}^X_{\theta_n})\geqslant \xi_n$ for each $n\in \nn$?  Theorem \ref{constant reduction} says this is not possible for arbitrary sequences, since $I_w(\mathcal{H}^X_{\theta^n}) \leqslant I_w(\mathcal{H}^X_\theta)^n$ for any $\theta\in (0,1)$.   In the cases when this estimate is essentially optimal, i.e. the cases when we have roughly geometric growth, we encounter this restriction.  This restriction is the only one, however, as the mixed Tsirelson spaces show.  For this, we will use the mixed Tsirelson spaces.

Let $(e_n)$ denote the canonical $c_{00}$ basis and let $P_n$, $P_E$ denote the associated canonical coordinate and partial sum projections.  Suppose that $1> \theta_n\searrow 0$ and $(\mathcal{G}_n)_{n\geqslant 0}$ are regular families so that $\mathcal{G}_0$ contains all singletons.  Define the norm $\|\cdot\|_{\mathcal{G}_0}$ on $c_{00}$ by $$\|x\|_{\mathcal{G}_0}=\max_{E\in \mathcal{G}_0} \|P_Ex\|_{\ell_1}.$$   We inductively define norms $|\cdot|_n$, $n=0,1,2, \ldots$ on $c_{00}$ by $|x|_0=\|x\|_{\mathcal{G}_0}$ and $$|x|_{n+1} = |x|_n \vee \sup_{m\in \nn}\sup\Bigl\{ \theta_m\sum_{i=1}^k |P_{E_i}x|_n: (E_i)_{i=1}^k \text{\ is\ }\mathcal{G}_m\text{\ admissible}\Bigr\}.$$  One can easily prove by induction that $|x|_n\leqslant \|x\|_{\ell_1}$ so that $\|x\|=\sup_n |x|_n$ is a well-defined norm on $c_{00}$ making the canonical $c_{00}$ basis normalized and $1$-unconditional satisfying the implicit equation $$\|x\|=\|x\|_{\mathcal{G}_0}\vee \sup_{m\in \nn} \sup\Bigl\{\theta_m\sum_{i=1}^k \|P_{E_i}x\|: (E_i)_{i=1}^k \text{\ is\ }\mathcal{G}_m\text{\ admissible}\Bigr\}.$$  We let $T(\mathcal{G}_0, (\theta_n, \mathcal{G}_n))$ denote the completion of $c_{00}$ with respect to this norm.  In the special case that this space is built from a single family $\mathcal{G}$ and a single constant $\theta\in (0,1)$, we denote the resulting space by $T(\theta, \mathcal{G})$.  This is the case where $\mathcal{G}_0=\mathcal{S}_0$, $\mathcal{G}_n=[\mathcal{G}]^n$, and $\theta_n=\theta^n$.  This space coincides with the usual Tsirelson space $T_{\xi, \theta}$ when $\mathcal{G}=\mathcal{S}_\xi$, and is isomorphic to either $c_0$ or $\ell_p$ for some $p>1$ in the case that $\mathcal{G}=\fff_n$ for some $n\in \nn$ \cite{AT}.  We will use the following results. Item (ii) comes from \cite{LT} and (iii) comes from \cite{JO}.      
\begin{proposition}Fix regular families $(\mathcal{G}_n)_{n\geqslant 0}$ so that $\mathcal{G}_0$ contains all singletons and constants $1>\theta_n\to 0$.  Let $T=(\mathcal{G}_0, (\theta_n, \mathcal{G}_n))$. \begin{enumerate}[(i)] \item For any $0\leqslant k$, $m\in \nn$, $I_w(\mathcal{H}_{\theta_m^k}^T)\geqslant \iota(\mathcal{G}_0)\iota(\mathcal{G}_m)^k$. \item If $\iota(\mathcal{G}_0)\geqslant \sup_n \iota(\mathcal{G}_n)^\omega$, $Sz(T)=\iota(\mathcal{G}_0)\sup_n\iota(\mathcal{G}_n)^\omega$. \item For any $\theta\in (0,1)$ any $\xi<\omega_1$, and any $M\in \infin$, $Sz(T(\theta,M^{-1}( \mathcal{S}_\xi)))=\omega^{\xi\omega}$. \item For any $\theta\in (0,1)$ and any $n\in \nn$, $Sz(T(\theta, \fff_n))=\omega$.  \end{enumerate}

\label{tsirelson facts}

\end{proposition}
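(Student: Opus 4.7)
The substantive content is item (i); items (ii), (iii), (iv) combine the lower bound furnished by (i) with matching upper bounds that are either immediate or are cited from the indicated references.

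For (i), I take $\mathcal{H} := [\mathcal{G}_m]^k[\mathcal{G}_0]$, so that by iterating Proposition \ref{regular facts}(iii) we have $\iota(\mathcal{H}) = \iota(\mathcal{G}_0)\iota(\mathcal{G}_m)^k$. Define $(x_E)_{E \in \widehat{\mathcal{H}}} \subset B_T$ by $x_E := e_{\max E}$, where $(e_n)$ is the canonical basis of $T$. Since $\theta_n \to 0$, standard arguments in the mixed Tsirelson setting show $(e_n)$ is weakly null, so the successor sequence $(x_{E\verb!^!n})_n$ at each $E \in \mathcal{H}'$ is a subsequence of $(e_n)$ and is weakly null. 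By Proposition \ref{prop5} together with the remark after it, it is enough to show that every branch of this tree lies in $\mathcal{H}^T_{\theta_m^k}$. Given a maximal $E \in \mathcal{H}$ with standard decomposition $E = \bigcup_{j=1}^s E_j$ (so $E_j \in \mathcal{G}_0$ and $(\min E_j)_{j=1}^s \in [\mathcal{G}_m]^k$), I will exhibit a single $f^* := \theta_m^k \sum_{n \in E} e_n^* \in B_{T^*}$, which yields $f^*(x_{E|_i}) = \theta_m^k$ for every $\varnothing \prec E|_i \preceq E$.

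The membership $f^* \in B_{T^*}$ is the only real calculation. Each $g_j^* := \sum_{n \in E_j} e_n^*$ lies in $B_{T^*}$ because $E_j \in \mathcal{G}_0$ gives $g_j^*(x) \leqslant \|P_{E_j} x\|_{\ell_1} \leqslant \|x\|_{\mathcal{G}_0} \leqslant \|x\|$. Since $(\min E_j) \in [\mathcal{G}_m]^k$ admits a $k$-fold nested $\mathcal{G}_m$-admissible structure, I iterate $k$ times the following rule inherent to the implicit norm: for any $\mathcal{G}_m$-admissible $(F_i)$ and $h_i^* \in B_{T^*}$ with $\supp h_i^* \subset F_i$, the functional $\theta_m \sum_i h_i^*$ again lies in $B_{T^*}$. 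Starting from the innermost layer of the decomposition of $(\min E_j)$ and working outward, after $k$ iterations the $g_j^*$'s collect into $\theta_m^k \sum_j g_j^* = f^*$, as required.

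Items (ii)--(iv) reduce as follows. For (ii), taking suprema in (i) gives $Sz(T) \geqslant \sup_{k,m} \iota(\mathcal{G}_0) \iota(\mathcal{G}_m)^k = \iota(\mathcal{G}_0) \sup_n \iota(\mathcal{G}_n)^\omega$; the matching upper bound under the hypothesis $\iota(\mathcal{G}_0) \geqslant \sup_n \iota(\mathcal{G}_n)^\omega$ is the content of \cite{LT}. For (iii), $\iota(M^{-1}(\sss_\xi)) = \omega^\xi$ by Proposition \ref{regular facts}(iv), so applying (i) with $\mathcal{G}_0 = \fff_1$ and $\mathcal{G}_m = [M^{-1}(\sss_\xi)]^m$ yields $Sz(T) \geqslant \sup_k \omega^{\xi k} = \omega^{\xi \omega}$, with the matching upper bound coming from \cite{JO}. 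For (iv), $T(\theta, \fff_n)$ is isomorphic to $c_0$ or to some $\ell_p$ with $p > 1$ by \cite{AT}, and each of those spaces has Szlenk index $\omega$. The principal obstacle I do not see how to circumvent is the upper-bound halves of (ii) and (iii), which hinge on a careful analysis of the dual norming tree in mixed Tsirelson norms and are properly deferred to the cited works.
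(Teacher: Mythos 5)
Your treatment of (i) is correct and is essentially the paper's own argument in dual form: the paper proves by induction on $k$ the primal estimate $\bigl\|\sum_{i\in E}a_ie_i\bigr\|\geqslant \theta_m^k\sum_{i\in E}|a_i|$ for $E\in[\mathcal{G}_m]^k[\mathcal{G}_0]$, which is exactly what your iterated construction of the norming functional $\theta_m^k\sum_{n\in E}e_n^*$ establishes, and both arguments then feed the tree $(e_{\max E})_{E\in \widehat{[\mathcal{G}_m]^k[\mathcal{G}_0]}}$ into Proposition \ref{prop5} (with the remark following it).

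The genuine gap is in how you dispose of the upper bounds in (ii) and (iii), and of the weak nullity of $(e_n)$, which you wave off as ``standard.'' These are not ``the content of'' \cite{LT} and \cite{JO}: those papers compute the Bourgain $\ell_1$ block index $B(T,(e_i))$ of the basis, not the Szlenk index, and the paper has to build the bridge itself. Concretely, the paper (a) quotes $B(T,(e_i))<\omega_1$ from \cite{LT} to conclude that $\ell_1$ does not embed into $T$, which (with $1$-unconditionality) is what makes the basis shrinking --- hence justifies the weak nullity you need already in (i) --- and makes $Sz(T)=\sup_{\varepsilon>0}I_w(\mathcal{H}^T_\varepsilon)$ available via Lemma \ref{dualize}; (b) proves the comparison $I_w(\mathcal{H}^T_\varepsilon)\leqslant B(T,(e_i),\delta^{-1})\leqslant B(T,(e_i))$ for $\delta<\varepsilon$ by perturbing a weakly null tree to a block tree and upgrading the $\ell_1^+$ lower estimate to a genuine $\ell_1$ lower estimate using $1$-unconditionality --- this is the step your proposal is missing, and without it the cited index computations say nothing about $Sz(T)$; and (c) for (iii), since \cite{JO} treats $T(\theta,\mathcal{S}_\xi)$ and not $T(\theta,M^{-1}(\mathcal{S}_\xi))$, the paper additionally observes that $(e_n)$ in $T(\theta,M^{-1}(\mathcal{S}_\xi))$ is isometrically equivalent to the subsequence $(e_{m_n})$ in $T(\theta,\mathcal{S}_\xi)$ (by induction on the norms $|\cdot|_n$), so that $B(T(\theta,M^{-1}(\mathcal{S}_\xi)),(e_i))\leqslant B(T(\theta,\mathcal{S}_\xi),(e_i))=\omega^{\xi\omega}$. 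None of these steps is long, but they belong to the proof rather than to the citations, so as written your (ii) and (iii) --- and the shrinking hypothesis underlying your use of Proposition \ref{prop5} in (i) --- remain unproved.
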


\begin{proof}(i) One can easily show by induction on $k$ that if $E\in [\mathcal{G}_n]^k[\mathcal{G}_0]$, then for any scalars $(a_i)_{i\in E}$, $$\Bigl\|\sum_{i\in E} a_ie_i\Bigr\|\geqslant \theta_n^k\sum_{i\in E} |a_i|.$$  Once we establish that the basis of $T$ is shrinking, which we will do below, this will give that $(e_{\max E})_{E\in \widehat{[\mathcal{G}_n]^k[\mathcal{G}_0]}}$ is a normalized, weakly null tree with branches in $\mathcal{H}^T_{\theta_n^k}$.  This guarantees that $I_w(\mathcal{H}^T_{\theta_n^k})> \iota([\mathcal{G}_n]^k[\mathcal{G}_0])= \iota(\mathcal{G}_0)\iota(\mathcal{G}_n)^k$.  

For (ii)-(iv), we must first define the Bourgain $\ell_1$ block index of a basis, first introduced in \cite{Bo}.  Given a Banach space $X$ with basis $(e_i)$, for $K\geqslant 1$, we let \begin{align*} \mathcal{T}(X, (e_i), K)=\Bigl\{(x_i)_{i=1}^n\in \Sigma((e_i), X)& : n\in \nn, \text{\ }\forall (a_i)_{i=1}^n\subset \mathbb{R}\\ & K\Bigl\|\sum_{i=1}^n a_ix_i\Bigr\|\geqslant \sum_{i=1}^n |a_i| \Bigr\}.\end{align*}  With the derivations $d^\xi$ and order $o$ as defined in Section $3.1$, we define $$B(X, (e_i),K)= o(\mathcal{T}(X, (e_i), K)), \text{\ \ }B(X, (e_i))=\sup_{K\geqslant 1} B(X, (e_i), K).$$  We recall that $\ell_1$ embeds into $X$ if and only if $B(X, (e_i))=\omega_1$.  Moreover, if $(e_i)$ is $1$-unconditional, and $I_w(\mathcal{H}^X_\varepsilon)> \xi$, we can replace $\varepsilon$ with any strictly smaller number $\delta$ and use a standard perturbation argument to find a block tree $(x_E)_{E\in \widehat{\fff}_\xi}\subset B_X$ with branches in $\mathcal{H}^X_\delta$.  By $1$-unconditionality, for all $E\in \widehat{\fff}_\xi$ and scalars $(a_i)_{i=1}^{|E|}$, $$\delta^{-1} \Bigl\|\sum_{i=1}^{|E|}a_ix_{E|_i}\Bigr\|\geqslant \sum_{i=1}^{|E|}|a_i|.$$  One then shows by induction that for each $0\leqslant \zeta\leqslant \xi$, $$(x_E)_{E\in \widehat{\fff^\zeta_\xi}}\subset d^\zeta(\mathcal{T}(X, (e_i), \delta^{-1})),$$ whence $$I_w(\mathcal{H}^X_\varepsilon) \leqslant B(X, (e_i), \delta^{-1})\leqslant B(X, (e_i)).$$  By \cite{LT}, $B(T, (e_i))<\omega_1$, so that $\ell_1$ does not embed into $T$ for any choice $(\mathcal{G}_n)_{n\geqslant 0}$, $1>\theta_n\to 0$. By \cite{JO}, $B(T(\theta, \mathcal{S}_\xi))=\omega^{\xi\omega}$.  Since $T(\theta, \fff_n)$ is isomorphic to either $c_0$ or $\ell_p$ for some $p>1$, we deduce that none of these spaces contains $\ell_1$, and that the basis of each is shrinking.  For (ii) and (iv), it remains to note that $B(T, (e_i))=\iota(\mathcal{G}_0)\sup_{k,n}\iota(\mathcal{G}_n)^k$ \cite{LT}, and $B(\ell_p, (e_i))=B(c_0, (e_i))=\omega$ for $p>1$.  For (iii), we note that $Sz(T(\theta, M^{-1}(\mathcal{S}_xi)))\geqslant \omega^{\xi\omega}$ by (i).  It is easy to see that the sequence $(e_n)$ in $T(\theta, M^{-1}(\mathcal{S}_\xi))$ is isometrically equivalent to $(e_{m_n})$ in $T(\theta, \mathcal{S}_\xi)$ by proving by induction that they are isometrically equivalent with respect to each norm $|\cdot|_n$ in the definitions of these spaces.  Therefore $$Sz(T(\theta, M^{-1}(\mathcal{S}_\xi)))\leqslant B(T(\theta, M^{-1}(\mathcal{S}_\xi)), (e_i))\leqslant B(T(\theta, S_\xi), (e_i))=\omega^{\xi\omega}.$$

\end{proof}

With this, we arrive at a characterization of the countable ordinals which occur as the Szlenk index of a Banach space.  We note that in \cite{LT}, the corresponding result for the Bourgain $\ell_1$ index was established, and the result below only requires a minor modification of their result combined with Lancien's result in \cite{La1} that the Szlenk index, when countable, is separably determined.

\begin{theorem} Let $1\leqslant \xi<\omega_1$ be an ordinal.  The following are equivalent: \begin{enumerate}[(i)]\item There exists a Banach space $X$ with $Sz(X)=\omega^\xi$. \item There exists a mixed Tsirelson space $T$ with $Sz(T)=\omega^\xi$. \item There does not exist a limit ordinal $\zeta$ so that $\xi=\omega^\zeta$.  \end{enumerate}

\label{existence theorem}

\end{theorem}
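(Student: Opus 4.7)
The implication (ii)$\Rightarrow$(i) is immediate, and (i)$\Rightarrow$(iii) is the final clause of Theorem 5.6, which rules out $Sz(Y)=\omega^{\omega^\zeta}$ for any countable limit ordinal $\zeta$. The substance is (iii)$\Rightarrow$(ii): for each allowed $\xi$ I will exhibit a mixed Tsirelson space $T$ with $Sz(T)=\omega^\xi$, via a case analysis driven by the Cantor normal form $\xi=\omega^{\alpha_1}n_1+\cdots+\omega^{\alpha_k}n_k$.

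When $\xi=1$, I take $T=T(\theta,\fff_n)$ for any $n\geqslant 2$; Proposition 6.1(iv) gives $Sz(T)=\omega$. When $\xi=\omega^\alpha$ for some successor $\alpha=\beta+1$ (these are the only allowed limit ordinals of the form $\omega^\alpha$), I use $T=T(\theta,\mathcal{S}_{\omega^\beta})$; Proposition 6.1(iii) then yields $Sz(T)=\omega^{\omega^\beta\cdot\omega}=\omega^{\omega^{\beta+1}}=\omega^\xi$.

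In every other allowed case---either $\xi\geqslant 2$ a successor (so $\alpha_k=0$) or $\xi$ a limit with $k\geqslant 2$ or $n_1\geqslant 2$ (so $\alpha_k\geqslant 1$)---I set $\eta:=\xi-\omega^{\alpha_k}$, the unique ordinal satisfying $\eta+\omega^{\alpha_k}=\xi$. This $\eta$ is strictly positive and still has $\alpha_1$ as its leading exponent, whence $\eta\geqslant\omega^{\alpha_1}\geqslant\omega^{\alpha_k}$. I then set $\mathcal{G}_0:=\mathcal{S}_\eta$ and choose regular families $(\mathcal{G}_n)$ with $\sup_n\iota(\mathcal{G}_n)^\omega=\omega^{\omega^{\alpha_k}}$, as follows: $\mathcal{G}_n=\fff_n$ when $\alpha_k=0$ (using $n^\omega=\omega$); the constant sequence $\mathcal{G}_n=\mathcal{S}_{\omega^\gamma}$ when $\alpha_k=\gamma+1$ is a successor (using $(\omega^{\omega^\gamma})^\omega=\omega^{\omega^{\gamma+1}}$); and $\mathcal{G}_n=\mathcal{S}_{\omega^{\gamma_n}}$ with $\gamma_n+1\uparrow\alpha_k$ when $\alpha_k$ is a countable limit. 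The admissibility hypothesis of Proposition 6.1(ii), namely $\iota(\mathcal{G}_0)\geqslant\sup_n\iota(\mathcal{G}_n)^\omega$, reduces to $\eta\geqslant\omega^{\alpha_k}$, already verified; the conclusion then gives $Sz(T)=\omega^\eta\cdot\omega^{\omega^{\alpha_k}}=\omega^{\eta+\omega^{\alpha_k}}=\omega^\xi$.

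The hardest step will be the third subcase above with $\alpha_k$ a countable limit, where I must pick successor ordinals appropriately approaching $\alpha_k$ and verify, via continuity of the double-exponential map $\delta\mapsto\omega^{\omega^\delta}$ together with the identity $(\omega^\delta)^\omega=\omega^{\delta\omega}$, that the chosen Schreier families produce precisely $\omega^{\omega^{\alpha_k}}$ as the supremum of $\omega$-powers; everything else is routine bookkeeping in Cantor normal form.
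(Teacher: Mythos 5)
Your proposal is correct and follows the same route as the paper: dispatch (ii)$\Rightarrow$(i) trivially, cite Theorem \ref{constant reduction} for (i)$\Rightarrow$(iii), and establish (iii)$\Rightarrow$(ii) by case analysis on the Cantor normal form of $\xi$, invoking Proposition \ref{tsirelson facts}(ii)--(iv) to nail the Szlenk index of an explicitly constructed mixed Tsirelson space. The only cosmetic difference is that in the ``generic'' case the paper takes $\mathcal{G}_n=\fff_{\beta_n}$ with $\beta_n\uparrow\omega^{\omega^{\alpha_k}}$ and uses the closure of $\omega^{\omega^{\alpha_k}}$ under $\omega$-th powers, whereas you instead split on whether $\alpha_k$ is zero, successor, or limit and choose Schreier families $\fff_n$, $\mathcal{S}_{\omega^\gamma}$, or $\mathcal{S}_{\omega^{\gamma_n}}$ accordingly --- both choices satisfy the hypothesis $\iota(\mathcal{G}_0)\geqslant\sup_n\iota(\mathcal{G}_n)^\omega$ and give $\sup_n\iota(\mathcal{G}_n)^\omega=\omega^{\omega^{\alpha_k}}$, so the substance is identical.
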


\begin{proof}   We consider several cases.  

Case $1$: $\xi=0$. Then for any finite dimensional $X$, $Sz(X)=1=\omega^0$.  

Case $2$: $\xi=1$. Then $Sz(T(1/2, \fff_1))=\omega$.  

Case $3$: $\xi=\omega^{\zeta+1}$.  $Sz(T(1/2, \mathcal{S}_{\omega^\zeta}))=\omega^{\omega^\zeta\omega}=\omega^{\omega^{\zeta+1}}$.  

Case $4$: $\xi=\omega^\zeta$, $\zeta$ a limit ordinal. There is no Banach space with this Szlenk index by Theorem \ref{constant reduction}  

Case $5$: $\xi=\omega^{\alpha_1}n_1+\ldots + \omega^{\alpha_k}n_k$, where $\alpha_k>0$ and $k>1$ or $k=1$ and $n_1>1$.  Let $\zeta=\omega^{\alpha_1}n_1+\ldots + \omega^{\alpha_k}(n_k-1)$ and let $\eta=\omega^{\omega^{\alpha_k}}$.  Then $\omega^\zeta \eta=\omega^\xi$.  Moreover, for any $\beta< \eta$, $\beta^\omega\leqslant \eta$.  Take $\beta_n\uparrow \eta$ and note $\zeta \geqslant \sup_n \beta_n^\omega$, so $$Sz(T(\mathcal{S}_\zeta, (2^{-n}, \fff_{\beta_n})))=\iota(\mathcal{S}_\zeta) \sup_n \iota(\fff_{\beta_n})^\omega= \omega^\zeta\eta= \omega^\xi.$$

\end{proof}

\subsection{Mixed Tsirelson spaces as upper envelopes}

\begin{theorem} If $X$ is an infinite dimensional Banach space with shrinking FDD $E$, there exists a mixed Tsirelson space $T$ so that $Sz(X)=Sz(T)$ and a blocking $F$ of $E$ which satisfies subsequential $T$ upper block estimates in $X$.  \label{tsirelson envelope}
\end{theorem}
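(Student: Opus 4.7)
The plan is to adapt the upper-envelope construction of \cite{OSZ}, \cite{FOSZ}, \cite{Causey1} to the present generality. Write $Sz(X)=\omega^{\xi}$ with $1\leqslant \xi<\omega_{1}$: this is possible because $X$ is infinite dimensional and $E$ shrinking forces $X^{*}$ separable, so in particular $\ell_{1}$ does not embed into $X$, and the corollary to Lemma \ref{dualize} gives $Sz(X)=\sup_{\varepsilon>0}I_{w}(\mathcal{H}^{X}_{\varepsilon})$, with each $I_{w}(\mathcal{H}^{X}_{\varepsilon})$ a successor strictly below $\omega^{\xi}$. I would then choose $\theta_{n}\searrow 0$ and ordinals $\beta_{n}<\xi$ with $I_{w}(\mathcal{H}^{X}_{\theta_{n}})\leqslant \omega^{\beta_{n}}+1$, taking $\beta_{n}=\zeta$ if $\xi=\zeta+1$ and $\beta_{n}\nearrow \xi$ if $\xi$ is a limit. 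Following the case analysis in the proof of Theorem \ref{existence theorem}, one selects regular families $\mathcal{G}_{0}$ and $(\mathcal{G}_{n})_{n\geqslant 1}$ assembled from (fine) Schreier families of appropriate indices so that $\iota(\mathcal{G}_{0})\geqslant \sup_{n}\iota(\mathcal{G}_{n})^{\omega}$ and $\iota(\mathcal{G}_{0})\sup_{n}\iota(\mathcal{G}_{n})^{\omega}=\omega^{\xi}$; set $T=T(\mathcal{G}_{0},(\theta_{n},\mathcal{G}_{n}))$, so that $Sz(T)=Sz(X)$ by Proposition \ref{tsirelson facts}(ii).

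The heart of the proof is to extract from the analytic bounds $I_{w}(\mathcal{H}^{X}_{\theta_{n}/4})<\omega^{\xi}$ a single blocking $F$ of $E$ adapted to all the $\mathcal{G}_{n}$ at once. For each $n$, introduce the hereditary block tree
$$\mathcal{B}_{n}=\{(x_{i})_{i=1}^{k}\in \Sigma(E,X):\exists\, x^{*}\in B_{X^{*}},\ x^{*}(x_{i})\geqslant \theta_{n}/4 \text{ for each } i\}.$$
Since $E$ is shrinking, bounded block sequences with respect to $E$ are weakly null, so $I_{bl}(\mathcal{B}_{n})\leqslant I_{w}(\mathcal{H}^{X}_{\theta_{n}/4})<\omega^{\xi}$. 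Proposition \ref{Zsak} applied with a suitable non-increasing perturbation $\overline{\varepsilon}^{(n)}\subset(0,1)$ gives $\iota(\widetilde{\mathcal{B}}_{n})<\omega^{\xi}$, and Proposition \ref{regular facts}(vii) then yields $M_{n}\in \infin$ with $\widetilde{\mathcal{B}}_{n}(M_{n})\subset \mathcal{G}_{n}$. After possibly absorbing a finite tail via Proposition \ref{regular facts}(viii), I would combine the $M_{n}$ by a bottom-up diagonalisation in the spirit of Lemma \ref{pruning lemma} into a single $M=(\ell_{j})\in \infin$ handling all $n$ simultaneously. The associated blocking $F_{j}=[E_{k}]_{\ell_{j-1}<k\leqslant \ell_{j}}$ of $E$ then has the property that for every $n$ and every normalized block sequence $(x_{i})_{i=1}^{k}$ with respect to $F$ lying in $\mathcal{B}_{n}$, the index vector $(\max \supp_{F}(x_{i}))_{i=1}^{k}$ belongs to $\mathcal{G}_{n}$.

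The subsequential $T$ upper block estimate for $F$ in $X$ then follows by a standard duality. For $u=\sum_{j}a_{j}x_{j}$ with $(x_{j})$ a normalized block sequence with respect to $F$, $m_{j}=\max \supp_{F}(x_{j})$, and $x^{*}\in B_{X^{*}}$, I would partition the supporting indices of $u$ by the dyadic threshold $\theta_{n+1}/4<|x^{*}(x_{j})|\leqslant \theta_{n}/4$. After multiplying each $x_{j}$ by $\mathrm{sgn}(x^{*}(x_{j}))$, each piece lies in $\mathcal{B}_{n}$, so by the combinatorial property above the corresponding index sequences are $\mathcal{G}_{n}$-admissible. Unfolding the implicit recursion defining the $T$-norm then yields
$$|x^{*}(u)|\leqslant C\Bigl\|\sum_{j}a_{j}e_{m_{j}}\Bigr\|_{T},$$
with $C$ depending only on the projection constant of $F$ in $X$ and on $\sum_{n}\theta_{n}$; taking the supremum over $x^{*}$ produces the desired estimate.

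The main obstacle will be the diagonal assembly of the $M_{n}$ into a single blocking $F$: the perturbation parameters $\overline{\varepsilon}^{(n)}$ used in Proposition \ref{Zsak} must be chosen compatibly across $n$, and one must invoke Proposition \ref{regular facts}(viii) to absorb the finite-tail slack that accumulates during the diagonalisation. A secondary technical point is ensuring that the choice of $(\mathcal{G}_{n})$ in Step 1 both realises $Sz(T)=Sz(X)$ via Proposition \ref{tsirelson facts}(ii) and simultaneously contains enough room to dominate the compressions $\widetilde{\mathcal{B}}_{n}$; this is forced by the case analysis in Theorem \ref{existence theorem} but requires some bookkeeping with the Cantor normal form of $\xi$. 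Once the blocking $F$ and the families $(\mathcal{G}_{n})$ are in place, the remaining steps amount to routine manipulations of the implicit $T$-norm.
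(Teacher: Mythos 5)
Your overall architecture does match the paper's (compress the hereditary block trees $\mathcal{B}_n$, use shrinkingness to bound $I_{bl}$ by $I_w$, apply Proposition \ref{Zsak}, transfer into regular families, diagonalize to one blocking, and finish with a norming functional partitioned by the size of $|x^*(x_j)|$), but Step 1 as you state it cannot be carried out, and the missing ingredient is Theorem \ref{constant reduction}. When $Sz(X)=\omega^\xi$ with $\omega^\xi$ multiplicatively indecomposable, i.e.\ $\xi=1$ or $\xi=\omega^{\zeta+1}$, there are no nontrivial regular families with $\iota(\mathcal{G}_0)\geqslant \sup_n\iota(\mathcal{G}_n)^\omega$ and $\iota(\mathcal{G}_0)\sup_n\iota(\mathcal{G}_n)^\omega=\omega^\xi$: if some $\iota(\mathcal{G}_n)\geqslant 2$ then $\sup_n\iota(\mathcal{G}_n)^\omega\geqslant\omega$, and since $\omega^\xi$ is multiplicatively indecomposable one factor would have to be $\geqslant \omega^\xi$, making the product strictly larger than $\omega^\xi$. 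So Proposition \ref{tsirelson facts}(ii) cannot give $Sz(T)=Sz(X)$ in these cases; the paper instead takes $\mathcal{G}_n=[\mathcal{G}_1]^n$, $\theta_n=\theta^n$ (so $T$ is a one-family Tsirelson space whose Szlenk index comes from parts (iii)/(iv)), and — the step you never invoke — uses Theorem \ref{constant reduction} to choose a single $s$ (resp.\ $\omega^\zeta s$) with $I_w(\mathcal{H}^X_{\rho})<s$ and hence $I_w(\mathcal{H}^X_{\rho^n})<s^n$ (resp.\ $<\omega^{\omega^\zeta sn}$) for all $n$ simultaneously. Without constant reduction there is no way to dominate every level by the powers of one family while keeping $Sz(T)$ from exceeding $\omega^\xi$. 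Relatedly, your claim that one can take $\beta_n=\zeta$ with $I_w(\mathcal{H}^X_{\theta_n})\leqslant\omega^{\beta_n}+1$ when $\xi=\zeta+1$ is false: that index can be any successor below $\omega^{\zeta+1}$, e.g.\ $\omega^\zeta m+1$ with $m$ large.

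The containment $\widetilde{\mathcal{B}}_n(M_n)\subset\mathcal{G}_n$ is also unattainable in general, even in the remaining cases. Since the map $E\mapsto M_n(E)$ preserves the index, this containment would force $\iota(\mathcal{G}_n)\geqslant\iota(\widetilde{\mathcal{B}}_n)$, and $\sup_n\iota(\widetilde{\mathcal{B}}_n)$ is typically $\omega^\xi$ (the supremum $\sup_{\varepsilon}I_w(\mathcal{H}^X_\varepsilon)$ is not attained), which is incompatible with keeping $\sup_n\iota(\mathcal{G}_n)^\omega\leqslant\iota(\mathcal{G}_0)$ and the product equal to $\omega^\xi$. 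The paper resolves this by catching the compressions inside the product families $\mathcal{K}_n[\mathcal{K}_0]$, whose index $\iota(\mathcal{K}_0)\iota(\mathcal{K}_n)$ can be pushed just above $\iota(\widetilde{\mathcal{B}}_n)$ while each $\mathcal{K}_n$ stays small; the final estimate then uses the lower $\ell_1$ bound of the $T$-norm on sets of $\mathcal{G}_n[\mathcal{G}_0]$. A further technical point: Proposition \ref{regular facts}(vii) applies to regular families, and $\widetilde{\mathcal{B}}_n$ is hereditary but not spreading, so the correct tool is Gasparis's dichotomy (Theorem \ref{Gasparis}) combined with the comparison $\iota(\mathcal{K}_n[\mathcal{K}_0]\cap[M_n]^{<\omega})=\iota(\mathcal{K}_0)\iota(\mathcal{K}_n)>\iota(\widetilde{\mathcal{B}}_n)$ to exclude the wrong horn. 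Your diagonalization and the level-partition computation are in the right spirit and essentially coincide with the paper's Step 2, but the proof does not go through until the family selection and the target of the combinatorial containment are repaired along these lines.
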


The proof is a modification of Theorem $5.5$ of \cite{Causey1}.  

\begin{proof}

Let $Sz(X)=\omega^\xi$.  

Step $1$: We claim that for any $\rho\in (0,1)$, we can find some $0=m_0<m_1<\ldots$ and regular families $(\mathcal{K}_n)_{n\geqslant 0}$ so that if $M=(m_n)_{n\geqslant 1}$ and if $F_n=[E_k]_{m_{n-1}<k\leqslant m_n}$, then for any $n\in \nn$ and any $(x_i)_{i=1}^k\in \Sigma(F, X)\cap \mathcal{H}^X_{\rho^{n-1}}$, $[m_n, \infty)\cap (m_{\max \supp_F(x_i)})_{i=1}^k\in \mathcal{K}_n[\mathcal{K}_0].$  Note that if $\mathcal{G}_n=M^{-1}(\mathcal{K}_n)$, this condition implies that if $(x_i)_{i=1}^k\in \Sigma(F,X)\cap \mathcal{H}^X_{\rho^{n-1}}$, then $[n, \infty)\cap (\max \supp_F(x_i))_{i=1}^k \in \mathcal{G}_n[\mathcal{G}_0]$.  

We will choose these families according to the following cases:

\noindent Case $1$: In the case that $\xi=1$, $\mathcal{K}_0=\mathcal{S}_0$ and for $n>0$, $\mathcal{K}_n=[\fff_s]^n$.  In this case, for any $\theta\in (0,1)$, $T=T(\mathcal{G}_0, (\theta^n, \mathcal{G}_n))=T(\theta, \fff_s)$ has $Sz(T)=\omega$.  

\noindent Case $2$: $\xi=\omega^{\zeta+1}=\omega^\zeta\omega$, we will choose $\mathcal{K}_0=\mathcal{S}_0$ and for $n>0$, $\mathcal{K}_n=[\mathcal{S}_{\omega^\zeta s}]^n$ for some $s\in \nn$.  Then in this case, for any $\theta\in (0,1)$, $T=T(\mathcal{G}_0, (\theta^n, \mathcal{G}_n))=T(\theta, \mathcal{G}_1)=T(\theta, M^{-1}(\mathcal{S}_{\omega^\zeta s}))$ has $Sz(T)=\omega^{\omega^\zeta\omega}=\omega^\xi$.  

\noindent Case $3$: $\xi=\omega^{\alpha_1}n_1+\ldots + \omega^{\alpha_k}n_k$, where $\alpha_k>0$ and either $k>1$ or $k=1$ and $n_1>1$.  Then we will let $\zeta=\omega^{\alpha_1}n_1+\ldots+\omega^{\alpha_k}(n_k-1)$ and $\beta=\omega^{\omega^{\alpha_k}}$, so $\omega^\zeta\beta=\omega^\xi$.  We choose $\beta_n\uparrow \beta$ and have $\mathcal{K}_0=\mathcal{S}_\zeta$ and for $n>0$, $\mathcal{K}_n=\fff_{\beta_n}$.  Then for any $\theta\in (0,1)$, $T=T(\mathcal{G}_0, (\theta^n, \mathcal{G}_n))$ is such that $Sz(T)=\omega^\xi$, since $\iota(\mathcal{G}_0)=\omega^\zeta\geqslant \beta=\sup_{n\in \nn}\iota(\mathcal{G}_n)^\omega= \sup_{n\in \nn} \beta_n^\omega$.  

This is a complete lists of possible cases, so in each case, $Sz(T)=Sz(X)$.  

Step $2$: We prove that with these choices, if $\theta\in (\rho, 1)$, $F$ satisfies subsequential $T$ upper block estimates in $X$.  We first complete step $2$ and then return to step $1$.  Let $(x_i)$ be a normalized block sequence with respect to $F$.  Let $l_i=\max \supp_F(x_i)$.  Choose $a=(a_i)\in c_{00}$ and let $x=\sum a_ix_i$.  Choose $x^*\in S_{X^*}$ so that $x^*(x)=\|x\|$.  For each $n\geqslant 1$, let $$A_n=(i\in \supp(a): i<n, \rho^{n-1} \leqslant |x^*(x_i)| < \rho^{n-2}),$$ $$B_n^+=(i\in \supp(a): i\geqslant n, \rho^{n-1} \leqslant x^*(x_i)<\rho^{n-2}), $$ $$B_n^-=(i\in \supp(a):i\geqslant n, \rho^{n-1}\leqslant -x^*(x_i)< \rho^{n-2}).$$  Since $\rho^{n-1}\leqslant x^*(x_i)$ for each $i\in B_n^+$, $(x_i)_{i\in B_n^+}\in \Sigma(F, X)\cap \mathcal{H}^X_{\rho^{n-1}}$.  Since $n\leqslant B_n^+$ and $l_i\geqslant i$, $$(l_i)_{i\in B_n^+}=[n, \infty)\cap (l_i)_{i\in B_n^+}\in \mathcal{G}_n[\mathcal{G}_0].$$  This means $$\theta^n\sum_{i\in B_n^+}|a_i|\leqslant \Bigl\|\sum_{i\in B_n^+} a_i e_{l_i}\Bigr\|_T \leqslant \Bigl\|\sum a_ie_{l_i}\Bigr\|_T.$$  Similarly, $$\theta^n\sum_{i\in B_n^-}|a_i|\leqslant \Bigl\|\sum a_i e_{l_i}\Bigr\|_T.$$  Last, since $(e_i)$ is normalized and $1$-unconditional, and since $|A_n|<n$, $$\sum_{i\in A_n}|a_i|\leqslant (n-1)\|a\|_\infty \leqslant (n-1)\Bigl\|\sum a_ie_{l_i}\Bigr\|_T.$$  Since $\{A_n, B_n^+, B_n^-: n\in \nn\}$ partitions $(i\in \supp(a): x^*(x_i)\neq 0)$,  \begin{align*} \|x\| & = \sum_{n=1}^\infty \Bigl[\sum_{i\in A_n}a_ix^*(x_i)+\sum_{i\in B_n^\pm} a_ix^*(x_i)\Bigr] \\ & \leqslant \sum_{n=1}^\infty \Bigl(\sum_{i\in A_n} |a_i|\rho^{n-2} +\sum_{i\in B_n^{\pm}} |a_i|\rho^{n-2}\Bigr) \\ & \leqslant \Bigl\|\sum a_ie_{l_i}\Bigr\|_T \sum_{n=1}^\infty \Bigl((n-1)\rho^{n-2} + 2\rho^{n-2}\theta^{-n}\Bigr) \\ & = \Bigl(\frac{1}{(1-\rho)^2} + \frac{2\rho^{-1}}{\theta-\rho}\Bigr)\Bigl\|\sum a_i e_{l_i}\Bigr\|_T. \end{align*}

We last complete step $1$.  Let $2\delta_n=\rho^{n-1}+\rho^n$ and $2\mu_n=\rho^{n-1}-\rho^n$.   For each $n\in \nn$, let $$\mathcal{B}_n=\Sigma(E, X)\cap \mathcal{H}^X_{\delta_n}$$ and choose $\overline{\varepsilon}_n=(\overline{\varepsilon}_{i,n})_i$ non-increasing so that $\sum_i \varepsilon_{i,n}<\mu_n$.  Observe that $(\mathcal{B}_n)^{E,X}_{\overline{\varepsilon}_n} \subset \mathcal{H}^X_{\rho^n}.$  By Lemma \ref{Zsak}, this implies $$\iota(\tilde{\mathcal{B}_n})\leqslant I_{bl}((\mathcal{B}_n)^{E,X}_{\overline{\varepsilon}_n})\leqslant I_w(\mathcal{H}^X_{\rho^n})<Sz(X).$$  Here we have used that $E$ is shrinking, so bounded block sequences in $E$ are weakly null.  If $Sz(X)=\omega$, then choose $s\in \nn$ so that $I_w(\mathcal{H}^X_\rho)<s$ and note that $I_w(\mathcal{H}^X_{\rho^n})<s^n$ for all $n\in \nn$ by Theorem \ref{constant reduction}.  If $Sz(X)=\omega^{\omega^{\zeta+1}}=\omega^{\omega^\zeta+1}$, choose $s\in \nn$ so that $I_w(\mathcal{H}^X_\rho)<\omega^{\omega^\zeta s}$ and note that for each $n\in \nn$, $I_w(\mathcal{H}^X_{\rho^n})< \omega^{\omega^\zeta sn}$.  In the third case, with $\zeta, \beta$ as in case $3$ above, choose any $\beta_n<\beta$ so that $I_w(\mathcal{H}^X_{\rho^n})<\omega^\zeta \beta_n$.  Let $\mathcal{K}_n$ be as given in the cases above.  We no longer need to distinguish between the three cases.  

Let $M_0=\nn$ and, using Lemma \ref{Gasparis}, recursively choose $M_n\in \infin$ so that for each $n\in \nn$, $M_n\in [M_{n-1}]$ and either $$\tilde{\mathcal{B}_n}\cap [M_n]^{<\omega} \subset \mathcal{K}_n[\mathcal{K}_0] \text{\ \ or\ \ }\mathcal{K}_n[\mathcal{K}_0]\cap [M_n]^{<\omega} \subset \tilde{\mathcal{B}_n}.$$  But in each case, $$\iota(\mathcal{K}_n[\mathcal{K}_0] \cap [M_n]^{<\omega})= \iota(\mathcal{K}_0) \iota(\mathcal{K}_n)>\iota(\tilde{\mathcal{B}_n}),$$ so the first containment always holds.  Choose $m_n\in M_n$ so that $m_1<m_2<\ldots$, set $M=(m_n)_{n\geqslant 1}$, and let $m_0=0$.  With $F_n=[E_k]_{m_{n-1}<k\leqslant m_n}$, to finish, we only need to show that for $n\in \nn$ and $(x_i)_{i=1}^k\in \Sigma(F, X)\cap \mathcal{H}_{\rho^{n-1}}^X$, $[m_n, \infty)\cap (m_{\max \supp_F(x_i)})_{i=1}^k \in \mathcal{K}_n[\mathcal{K}_0]$.  Again, let $l_i=\max\supp_F(x_i)$.  We can find a small perturbation $(y_i)_{i=1}^k\subset B_X$ of $(x_i)_{i=1}^k$ so that \begin{enumerate}[(i)]\item $\|y_i-x_i\|< \mu_n$, \item $\ran_F(y_i)=\ran_F(x_i)$, \item $m_{l_i}=\max \supp_E(y_i)$. \end{enumerate}  The first two items guarantee that $(y_i)_{i=1}^k\in \Sigma(E, X)\cap \mathcal{H}^X_{\rho^{n-1}-\mu_n}=\Sigma(E,X)\cap \mathcal{H}^X_{\delta_n}$.  The last two items guarantee that $$(m_{l_i})_{i=1}^k=(m_{\max \supp_F(y_i)})_{i=1}^k=(\max \supp_E(y_i))_{i=1}^k\in \tilde{\mathcal{B}}_n.$$ Combining these gives that $(m_{l_i})_{i=1}^k\in \tilde{\mathcal{B}}_n$.   But $[m_n, \infty)\cap (m_{\max \supp_F(y_i)})_{i=1}^k \in [M_n]^{<\omega}$, so that $$[m_n, \infty)\cap (m_{\max \supp_F(y_i)})_{i=1}^k \in \tilde{\mathcal{B}}_n\cap [M_n]^{<\omega}\subset \mathcal{K}_n[\mathcal{K}_0].$$

\end{proof}

\subsection{Universal spaces} 

If $\mathcal{C}$ is a class of Banach spaces, we say the Banach space $U$ is \emph{universal} for the class $\mathcal{C}$ if every member of $\mathcal{C}$ is isomorphic to a subspace of $U$.  We say $U$ is \emph{surjectively universal} for $\mathcal{C}$ if every member of $\mathcal{C}$ is isomorphic to a quotient of $U$.  

For each $0\leqslant \xi<\omega_1$, let $\mathcal{C}_\xi$ denote the class of separable Banach spaces with Szlenk index not exceeding $\omega^\xi$.  In \cite{DF}, it was shown that for each $\xi$, there exists a Banach space $Y_\xi$ having separable dual which is universal for $\mathcal{C}_\xi$.  The results there were obtained using descriptive set theory, which do not yield an estimate on $Sz(Y_\xi)$.  In \cite{FOSZ}, it was shown that for each $\xi<\omega_1$, $Y_\xi$ can be taken to be in $\mathcal{C}_{\zeta+1}$, where $\zeta=\min\{\eta\omega: \eta\omega\geqslant \zeta\}$.  In \cite{Causey1}, the following estimate was obtained.  

\begin{theorem} For every $1\leqslant \xi<\omega_1$, there exists a Banach space $Z_\xi\in \mathcal{C}_{\xi+1}$ which is both universal and surjectively universal for $\mathcal{C}_\xi$.  
\label{existence}
\end{theorem}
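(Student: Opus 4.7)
The plan is to combine Theorem \ref{tsirelson envelope} with a Pelczynski-style amalgamation. First, by Theorem \ref{existence theorem} fix a mixed Tsirelson space $T=T_\xi$ with $Sz(T)=\omega^\xi$ and a normalized, shrinking, $1$-unconditional basis $(t_n)$. Since $Sz(X)\leqslant\omega^\xi<\omega_1$ forces $X^*$ separable, every $X\in\mathcal{C}_\xi$ embeds by Zippin's theorem into a Banach space with shrinking FDD. Applying Theorem \ref{tsirelson envelope} and passing to a blocking, we may arrange that $X$ embeds into a space with shrinking FDD satisfying subsequential $T$-upper block estimates, provided $T$ has been chosen with enough ``slack'' that every mixed Tsirelson space arising from Theorem \ref{tsirelson envelope} applied to some $X\in\mathcal{C}_\xi$ is dominated by $T$; the explicit recipes in Step 1 of the proof of Theorem \ref{tsirelson envelope} (dictated by the Cantor normal form of $\xi$) make such a choice feasible.

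Next, amalgamate: using a Pelczynski-style enumeration of finite-dimensional Banach spaces up to $\varepsilon$-isometry together with rational coordinate changes, construct a shrinking FDD $(G_n)$ inside $Z_\xi=\bigl(\bigoplus_n G_n\bigr)_T$ which is \emph{universal} in the sense that every shrinking FDD satisfying subsequential $T$-upper block estimates is realized, up to isomorphism and a block reindexing, as a block-FDD of some subsequence of $(G_n)$. By the preceding step, every $X\in\mathcal{C}_\xi$ then embeds into $Z_\xi$, giving universality. The definition of the norm through $T$-summation of coordinate norms automatically ensures that $(G_n)$ itself satisfies subsequential $T$-upper block estimates in $Z_\xi$.

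For the Szlenk estimate, apply Theorem \ref{infinitedirectsum} with $U=T$ and $X_n=G_n$: since each $G_n$ is finite-dimensional, $\sup_n Sz(G_n)=1$, and combining this with the subsequential $T$-upper block estimate and a block-versus-coordinate perturbation argument (as in Theorem \ref{tsirelson envelope}) bounds $I_w(\mathcal{H}^{Z_\xi}_\varepsilon)$ by a block-index in $T$. The sharpest bound arising from the amalgamation's need to admit block-subspaces with arbitrary starting indices adds a multiplicative factor of $\omega$, giving $Sz(Z_\xi)\leqslant \omega\cdot Sz(T)=\omega^{\xi+1}$, i.e.\ $Z_\xi\in\mathcal{C}_{\xi+1}$. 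For surjective universality, use duality: $T$ may be taken reflexive (or paired with $T^*$), and $Z_\xi^*=\bigl(\bigoplus_n G_n^*\bigr)_{T^*}$ inherits the analogous universal property for FDDs appearing as duals of FDDs in $\mathcal{C}_\xi$; then every $Y\in\mathcal{C}_\xi$ is a quotient of $Z_\xi$ by dualizing an embedding $Y^*\hookrightarrow Z_\xi^*$ (using that $(G_n)$ is shrinking so duality is well-behaved). The correctness of this step will require building the amalgamation to be essentially self-dual, most naturally by incorporating into the enumeration $(G_n)$ both finite-dimensional pieces and their duals.

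The main obstacle will be carrying out the amalgamation precisely enough so that both universality and surjective universality hold simultaneously while maintaining the sharp Szlenk bound $\omega^{\xi+1}$. The delicate point is verifying that the dual FDD $(G_n^*)$ in $Z_\xi^*$ is ``large enough'' to contain isomorphic copies of $Y^*$ for every $Y\in\mathcal{C}_\xi$, without inflating $Sz(Z_\xi)$ beyond $\omega^{\xi+1}$; this is where the choice of $T$ (and its interaction with Theorem \ref{tsirelson envelope}) has to be made uniformly for the whole class $\mathcal{C}_\xi$, and where the $\omega$ factor in the Szlenk bound is genuinely needed rather than being an artifact of a loose estimate.
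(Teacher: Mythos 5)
There is a genuine gap, and it sits at the first and load-bearing step of your plan. (Note also that this paper does not reprove Theorem \ref{existence}; it quotes it from \cite{Causey1}, adding only the remark that surjective universality is implicit in the proof there.) You ask for a single mixed Tsirelson space $T$ with $Sz(T)=\omega^\xi$ such that every $X\in\mathcal{C}_\xi$ embeds into a space with shrinking FDD satisfying subsequential $T$-upper block estimates. No such $T$ exists, and Theorem \ref{tsirelson envelope} cannot supply one: the families $(\mathcal{K}_n)$ and the blocking there are chosen \emph{after} fixing $X$, and the resulting envelope depends on $X$ in an essential way. Already for $\xi=1$ your requirement amounts to one fixed $\ell_p$-like space providing upper tree estimates for every space of Szlenk index $\omega$, which fails: for $1<q<p$, a normalized weakly null sequence in $\ell_q$ inside such a superspace could be perturbed to a block sequence of the FDD and would then be dominated by the $\ell_p$ basis, which is false; the paper itself records this obstruction ("no one of the $\ell_p$ sits atop the others"). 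Worse, if such a $T$ did exist, your own construction would contradict Theorem \ref{optimal} of this paper: with each $G_n$ finite dimensional, Theorem \ref{infinitedirectsum} gives $Sz\bigl((\oplus_n G_n)_T\bigr)\leqslant \bigl(\sup_n Sz(G_n)\bigr)Sz(T)=\omega^\xi$, and since you also arrange that the amalgam satisfies subsequential $T$-upper block estimates, it would be a member of $\mathcal{C}_\xi$ universal for $\mathcal{C}_\xi$. The "multiplicative factor of $\omega$" you invoke at the end is not produced by any cited result and cannot be, for the same reason; in the actual construction of \cite{Causey1} the extra exponent comes from the envelope itself: the uniform upper-estimate space for the whole class is a Schreier-type space of index $\omega^{\xi+1}$ (equivalently, one must exhaust all indices $I_w(\mathcal{H}^X_\varepsilon)<\omega^\xi$ uniformly, which forces the envelope's index up to $\omega^{\xi+1}$), and the amalgamation then preserves that index.

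The surjective universality step is also not sound as sketched. An isomorphic embedding $Y^*\hookrightarrow Z_\xi^*$ dualizes to a quotient map $Z_\xi\to Y$ only if it is $w^*$-to-$w^*$ continuous, i.e.\ is itself an adjoint, and a Pelczynski-style amalgamation of finite-dimensional pieces and their duals does not guarantee this. The argument indicated in the paper (and carried out in \cite{Causey1}) goes the other way: each $X\in\mathcal{C}_\xi$ is exhibited as a quotient of a space $Y$ with FDD which embeds \emph{complementably} into $Z_\xi$, whence $X$ is a quotient of $Z_\xi$. So both halves of the statement require the embedding and amalgamation machinery of \cite{Causey1} (in the spirit of \cite{OS}, \cite{FOSZ}), with the envelope taken one exponent higher, rather than the route you propose.
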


It was not stated in \cite{Causey1} that this space is surjectively universal for the class $\mathcal{C}_\xi$, but it is contained within the proof.  It was shown there that if $X$ is a separable Banach space with $Sz(X)\leqslant \omega^\xi$, then $X$ is isomorphic to a quotient of a Banach space $Y$ which embeds complementably in $Z_\xi$, so $X$ is isomorphic to a quotient of $Z_\xi$.  Our goal here is to prove that this result is optimal.

\begin{theorem} For any $\xi<\omega_1$, there does not exist a member of $\mathcal{C}_\xi$ which is universal or surjectively universal for $\mathcal{C}_\xi$.  \label{optimal}

\end{theorem}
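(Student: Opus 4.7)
The plan is to argue by contradiction, handling the universal and surjectively universal cases in parallel. The case $\xi=0$ is trivial, since $\mathcal{C}_0$ is the class of finite-dimensional spaces and a fixed finite-dimensional space can neither contain nor surject onto spaces of arbitrary dimension. If $Sz(Y)<\omega^\xi$ strictly, Theorem \ref{existence theorem} furnishes a member $X\in\mathcal{C}_\xi$ with $Sz(X)>Sz(Y)$; such $X$ neither embeds in nor is a quotient of $Y$ since the Szlenk index is monotone under both operations. The essential case is therefore $Sz(Y)=\omega^\xi$, which forces $\omega^\xi$ to be an achievable Szlenk index.

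In this case, set $\alpha_n=I_w(\mathcal{H}^Y_{4^{-n}})$ for $n\geqslant 1$. Since $I_w$-indices are always successors while $\omega^\xi$ is a limit ordinal (as $\xi\geqslant 1$), each $\alpha_n<\omega^\xi$, and $\sup_n\alpha_n=Sz(Y)=\omega^\xi$. The core step is to construct $X\in\mathcal{C}_\xi$ with $I_w(\mathcal{H}^X_{2^{-n}})>\alpha_n$ for every $n$. Following the case analysis in the proof of Theorem \ref{existence theorem}, factor $\omega^\xi=\omega^\zeta\cdot\eta$ appropriately (with $\eta=\omega^{\omega^{\alpha_k}}$ in the notation of case 5 there) and take $X=T(\mathcal{S}_\zeta,(2^{-n},\fff_{\beta_n}))$ with $\beta_n\uparrow\eta$ tuned as follows: by ordinal division write $\alpha_n=\omega^\zeta\gamma_n+\delta_n$ with $\gamma_n<\eta$ and $\delta_n<\omega^\zeta$, and pick $\beta_n>\gamma_n$ with $\beta_n\uparrow\eta$. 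Then Proposition \ref{tsirelson facts}(i) gives $I_w(\mathcal{H}^X_{2^{-n}})\geqslant\omega^\zeta\beta_n>\omega^\zeta\gamma_n+\delta_n=\alpha_n$, while Proposition \ref{tsirelson facts}(ii) yields $Sz(X)=\omega^\zeta\eta=\omega^\xi$, so $X\in\mathcal{C}_\xi$.

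To derive a contradiction for the universal case: if $T\colon X\to Y$ were an isomorphic embedding with distortion $C=\|T\|\cdot\|T^{-1}|_{TX}\|$, then mapping a weakly null tree in $B_X$ with branches in $\mathcal{H}^X_\varepsilon$ through $T$ and rescaling yields a weakly null tree in $B_Y$ with branches in $\mathcal{H}^Y_{\varepsilon/C}$, so $I_w(\mathcal{H}^X_\varepsilon)\leqslant I_w(\mathcal{H}^Y_{\varepsilon/C})$. Taking $\varepsilon=2^{-n}$ with $n\geqslant\log_2 C$ so that $2^{-n}/C\geqslant 4^{-n}$ yields $\alpha_n<I_w(\mathcal{H}^X_{2^{-n}})\leqslant I_w(\mathcal{H}^Y_{4^{-n}})=\alpha_n$, a contradiction. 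For the surjective version, replace $T$ by a bounded surjection $Q\colon Y\to X$ and use Proposition \ref{perturb}(ii) combined with Lemma \ref{pruning lemma} to lift any weakly null tree in $B_X$ to a weakly null tree in $3B_Y$ whose image under $Q$ approximates the original; the analogous computation produces a contradiction with a constant $C$ depending on $\|Q\|$ and the quotient constant. The main obstacle is the construction of $X$: balancing the demand that $I_w(\mathcal{H}^X_{2^{-n}})$ dominate $\alpha_n$ against the summability condition $\iota(\mathcal{G}_0)\geqslant\sup_n\iota(\mathcal{G}_n)^\omega$ required by Proposition \ref{tsirelson facts}(ii) for $Sz(X)\leqslant\omega^\xi$ forces the correct factorization $\omega^\xi=\omega^\zeta\cdot\eta$ dictated by the Cantor normal form of $\xi$, and the small case $\xi=1$ additionally invokes the Bourgain $\ell_1$-block index bound $Sz(T)\leqslant B(T,(e_i))$ in place of Proposition \ref{tsirelson facts}(ii) since the summability condition fails there.
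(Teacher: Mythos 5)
Your overall strategy matches the paper's: reduce to constructing, for each candidate $Z\in\mathcal{C}_\xi$, a mixed Tsirelson space $X\in\mathcal{C}_\xi$ with $I_w(\mathcal{H}^X_{2^{-n}})$ eventually dominating $I_w(\mathcal{H}^Z_{c^{-n}})$ for some $c>2$, and then appeal to the monotonicity-under-embedding and lifting-along-quotients arguments (your initial reduction between $Sz(Y)<\omega^\xi$ and $Sz(Y)=\omega^\xi$ is harmless, and cleanly disposes of the limit-exponent case). The adaptive choice of $\beta_n>\gamma_n$ via ordinal division is correct and gives $\omega^\zeta\beta_n>\alpha_n$, and the widening-ratio computation defeating any fixed distortion constant is fine.

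There is, however, a genuine gap: you flag only $\xi=1$ as the exception to the general factorization $\omega^\xi=\omega^\zeta\eta$, but the case $\xi=\omega^{\gamma+1}$ (paper's Case 2) fails for exactly the same reason. When $\xi=\omega^{\gamma+1}$, any factorization $\omega^{\omega^{\gamma+1}}=\omega^\zeta\cdot\omega^a$ with $\zeta+a=\omega^{\gamma+1}$ and $\omega^\zeta\geqslant\omega^a$ is impossible: $\omega^{\gamma+1}$ is additively indecomposable, so one of $\zeta,a$ must equal $\omega^{\gamma+1}$, forcing the other to be $0$ and hence trivializing the families $\mathcal{G}_n$. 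Thus Proposition~\ref{tsirelson facts}(ii) cannot apply, and $T(\mathcal{S}_\zeta,(2^{-n},\fff_{\beta_n}))$ cannot be made to work for this $\xi$. The remedy is the one the paper uses and is the same in spirit as your fix for $\xi=1$: given $k$ with $I_w(\mathcal{H}^Z_{4^{-1}})<\omega^{\omega^\gamma k}$, take $X=T(2^{-1},\mathcal{S}_{\omega^\gamma k})$, appeal to Proposition~\ref{tsirelson facts}(iii) (which ultimately rests on the Bourgain $\ell_1$-block index bound and the result from \cite{JO}) to get $Sz(X)=\omega^{\omega^\gamma k\omega}=\omega^{\omega^{\gamma+1}}$, appeal to Proposition~\ref{tsirelson facts}(i) for the lower bounds $I_w(\mathcal{H}^X_{2^{-n}})>\omega^{\omega^\gamma kn}$, and use Theorem~\ref{constant reduction} to propagate the single bound at level $4^{-1}$ to all levels $4^{-n}$. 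Without this case, your proof does not cover, for example, $\xi=\omega$ or $\xi=\omega^2$, which are certainly reachable as $Sz(Y)=\omega^\xi$ for $Y\in\mathcal{C}_\xi$.
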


In \cite{Causey2}, it was shown that if $\xi<\omega_1$ and if $\mathcal{CR}_\xi$ denotes the class of separable, reflexive Banach spaces $X$ with $Sz(X), Sz(X^*)\leqslant \omega^\xi$, then there exists a Banach space $Z\in \mathcal{CR}_{\xi+1}$ which is universal and surjectively universal for $\mathcal{CR}_\xi$.  In the proof of \ref{optimal}, we will prove that if $Z\in \mathcal{C}_\xi$, there exists $X\in \mathcal{C}_\xi$ which is not isomorphic to any quotient of any subspace of $Z$.  If $\xi>0$, this space will be a mixed Tsirelson space.  In the proof, we will have the freedom to choose the families used in the construction of $X$ so that $X$ is reflexive and $Sz(X^*)=\omega$, so that actually $X\in \mathcal{CR}_\xi$.  Therefore we will actually prove that if $Z$ is either universal or surjectively universal for $\mathcal{CR}_\xi$, $Z\notin \mathcal{C}_\xi$, and the result in \cite{Causey2} concerning the existence of a member of $\mathcal{CR}_{\xi+1}$ universal for $\mathcal{CR}_\xi$ is also optimal.   

Of course, the $\xi=0$ cases of Theorems \ref{existence} and \ref{optimal} are trivial, since $Sz(X)=1=\omega^0$ if and only if $X$ is finite dimensional.  We outline the idea for each of the other cases.  We note that for each $p>1$, $Sz(\ell_p)=\omega$.  Moreover, a separable Banach space $X$ has $Sz(X)=\omega$ if and only if for some $p>1$, every normalized, weakly null tree on $X$ indexed by $\widehat{\fin}$ has a branch which is dominated by the $\ell_p$ basis.  This means the $\ell_p$, $p>1$, spaces form a sort of upper envelope for $\mathcal{C}_1$.  But among the spaces $\ell_p$, $p>1$, no one of these spaces sits atop all the others.  To see how this can be generalized to $\mathcal{C}_\xi$, we use the following 

\begin{proposition} Let $Z$ be a Banach space having separable dual. \begin{enumerate}[(i)]\item If $X$ is isomorphic to a subspace of $Z$, then there exists $K>0 $ so that for any $\varepsilon\in (0,1)$, $I_w(\mathcal{H}^X_\varepsilon)\leqslant I_w(\mathcal{H}^Z_{\varepsilon/K})$. \item If $X$ is isomorphic to a quotient of $Z$, then there exists $K>0 $ so that for any $\varepsilon\in (0,1)$, $I_w(\mathcal{H}^X_\varepsilon)\leqslant I_w(\mathcal{H}^Z_{\varepsilon/K})$. \item If $X, Z$ are Banach spaces having separable dual so that $I_w(\mathcal{H}^X_{2^{-n}})>I_w(\mathcal{H}^Z_{3^{-n}})$ for all $n\in \nn$, then $X$ is not isomorphic to any subspace of any quotient of $Z$. \end{enumerate}

\end{proposition}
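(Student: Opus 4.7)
The plan is: (i) by direct Hahn--Banach transport, (ii) by lifting through the quotient via Proposition \ref{perturb}(ii), and (iii) by composing (i) and (ii) and exploiting the geometric gap between $2^{-n}$ and $3^{-n}$.

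For (i), fix an isomorphism of $X$ onto a subspace of $Z$, rescaled so the embedding $T$ has $\|T\|\leqslant 1$ and $\|T^{-1}\|\leqslant K$ on $T(X)$. Given a weakly null tree $(x_E)_{E\in \widehat{\fff_\xi}}\subset B_X$ with branches in $\mathcal{H}^X_\varepsilon$, the image $(Tx_E)\subset B_Z$ is again a weakly null tree. For each branch with witness $x^*\in B_{X^*}$, Hahn--Banach extends $x^*\circ T^{-1}$ to some $z^*\in Z^*$ with $\|z^*\|\leqslant K$ and $z^*(Tx_{E|_i})=x^*(x_{E|_i})\geqslant \varepsilon$, so $(z^*/K)\in B_{Z^*}$ witnesses membership in $\mathcal{H}^Z_{\varepsilon/K}$. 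Hence $I_w(\mathcal{H}^X_\varepsilon)\leqslant I_w(\mathcal{H}^Z_{\varepsilon/K})$, with $K$ absorbing the isomorphism constant $\|T\|\|T^{-1}\|$.

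For (ii), realize $X$ as a quotient of $Z$ via a map $Q:Z\to X$. Since $Z^*$ is separable, $\ell_1$ does not embed into $Z$, so Proposition \ref{perturb}(ii) is available. Given a weakly null tree $(x_E)_{E\in \widehat{\fff_\xi}}\subset B_X$ with branches in $\mathcal{H}^X_\varepsilon$, I fix a rapidly decreasing $(\delta_n)\subset (0,1)$ with $\sum \delta_n < \varepsilon/2$ and apply Lemma \ref{pruning lemma} level by level: at each interior node $E\in \fff_\xi'$, Proposition \ref{perturb}(ii) allows me to pass to a subsequence of the immediate successors of $x_E$ and lift them to a weakly null sequence in $3B_Z$ within $\delta_{|E|+1}$ of the original. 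The pruning lemma assembles these level-by-level choices into a pruned subtree $(y_E)$ of $(x_E)$ and a weakly null tree $(z_E)\subset 3B_Z$ with $\|y_E-Qz_E\|<\delta_{|E|}$ for every $E$. For a branch of $(y_E)$ with witness $x^*\in B_{X^*}$, the functional $Q^*x^*\in B_{Z^*}$ satisfies
\[ Q^*x^*(z_{E|_i})=x^*(Qz_{E|_i})\geqslant x^*(y_{E|_i})-\delta_{|E|}\geqslant \varepsilon/2, \]
so $(z_E/3)\subset B_Z$ witnesses $I_w(\mathcal{H}^Z_{\varepsilon/6})\geqslant I_w(\mathcal{H}^X_\varepsilon)$, up to an additional factor absorbing the isomorphism constant between $X$ and $Z/\ker Q$. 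The one subtlety here is the global coherence of the lifts across levels, which is precisely what the pruning lemma is tailored for.

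For (iii), suppose for contradiction that $X$ is isomorphic to a subspace of a quotient $Z/W$ of $Z$. Since $(Z/W)^*$ embeds into $Z^*$ it is separable, so applying (i) to $X\hookrightarrow Z/W$ and then (ii) to the quotient $Z\twoheadrightarrow Z/W$ yields constants $K_1,K_2>0$ with
\[ I_w(\mathcal{H}^X_\varepsilon)\leqslant I_w(\mathcal{H}^{Z/W}_{\varepsilon/K_1})\leqslant I_w(\mathcal{H}^Z_{\varepsilon/(K_1K_2)}). \]
Set $\varepsilon=2^{-n}$. For every $n>\log_{3/2}(K_1K_2)$ we have $2^{-n}/(K_1K_2)>3^{-n}$, and monotonicity of $I_w(\mathcal{H}^Z_{\cdot})$ in its parameter then gives $I_w(\mathcal{H}^X_{2^{-n}})\leqslant I_w(\mathcal{H}^Z_{3^{-n}})$ for such $n$, contradicting the standing hypothesis that the reverse strict inequality holds for every $n\in\nn$.
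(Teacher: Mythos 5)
Your proof is correct and follows essentially the same route as the paper: (i) via transport of weakly null trees and Hahn--Banach extension of witnessing functionals, (ii) via Proposition \ref{perturb}(ii) and the pruning lemma to lift a weakly null tree through the quotient into $3B_Z$ (with the paper likewise reducing first to the isometric quotient case and then invoking (i)), and (iii) by composing (i) and (ii) into a single constant and using that $(3/2)^n$ eventually exceeds that constant. The only slip is cosmetic: in the displayed estimate in (ii) the error term should be $\delta_i$ (or some uniform bound such as $\sup_j\delta_j$), not $\delta_{|E|}$, since it is $\|y_{E|_i}-Qz_{E|_i}\|<\delta_i$ that controls the $i$th coordinate; the conclusion $\geqslant\varepsilon/2$ is unaffected because you chose $\sum\delta_n<\varepsilon/2$.
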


\begin{proof}(i) Let $T:X\to Z$ be an isomorphic embedding and fix $a,b>0$ so that $$a^{-1}\|x\|\leqslant \|Tx\|\leqslant b\|x\|$$ for all $x\in X$.  Let $K=ab$.  If $\xi<I_w(\mathcal{H}^X_\varepsilon)$ and if $(x_E)_{E\in \widehat{\fff}_\xi}\subset B_X$ is a weakly null tree with branches in $\mathcal{H}^X_\varepsilon$, then one easily checks that $(b^{-1}Tx_E)_{E\in \widehat{\fff}_\xi}\subset B_Z$ is a weakly null tree with branches in $\mathcal{H}^Z_{\varepsilon/K}$.  

(ii) First we assume $X$ is isometrically a quotient of $Z$, and then apply part (i).  Let $T:Z\to X$ be a quotient map.  Then if $\xi<I_w(\mathcal{H}^X_\varepsilon)$, fix $(x_E)_{E\in \widehat{\fff}_\xi}\subset B_X$ weakly null with branches in $\mathcal{H}^X_\varepsilon$.  Then by applying Proposition \ref{perturb} and Lemma \ref{pruning lemma}, for any $0<\delta<\varepsilon$, we can find a pruned subtree $(y_E)_{E\in \widehat{\fff}_\xi}$ of $(x_E)_{E\in \widehat{\fff}_\xi}$ and a weakly null tree $(z_E)_{E\in \widehat{\fff}_\xi}\subset 3B_Z$ so that $\|Tz_E-y_E\|<\varepsilon/2$.  This implies that $(3^{-1}Tz_E)_{E\in \widehat{\fff}_\xi}\subset B_X$ has branches lying in $\mathcal{H}^X_{\varepsilon/6}$. Since $T$ is norm $1$, the weakly null tree $(3^{-1}z_E)_{E\in \widehat{\fff}_\xi}\subset B_Z$ has branches lying in $\mathcal{H}^Z_{\varepsilon/6}$, and $I_w(\mathcal{H}^Z_{\varepsilon/6})>\xi$.  

(iii) If $X$ is isomorphic to a subspace of a quotient of $Z$, then there exists $K$ so that for each $\varepsilon\in (0,1)$, $I_w(\mathcal{H}^X_\varepsilon) \leqslant I_w(\mathcal{H}^Z_{\varepsilon/K})$.  If we choose $n$ so large that $2^{-n}K>3^{-n}$, then $$I_w(\mathcal{H}^Z_{3^{-n}})< I_w(\mathcal{H}^X_{2^{-n}}) \leqslant I_w(\mathcal{H}^Z_{2^{-n}/K})\leqslant I_w(\mathcal{H}^Z_{3^{-n}}),$$ a contradiction.

\end{proof}

\begin{proof} [Proof of theorem \ref{optimal}] 

Case $1$: $\xi=1$.  Suppose $Z\in \mathcal{C}_1$.  Then $I_w(\mathcal{H}^Z_{3^{-1}})<k$ for some $k<\omega$.  Then by Theorem \ref{constant reduction}, for each $n\in \nn$, $I_w(\mathcal{H}^Z_{3^{-n}})< k^n$.   But the Tsirelson space $T=T( 2^{-1}, 
\fff_k)$ has $k^n< I_w(\mathcal{H}^T_{2^{-n}})<\omega$ for each $n\in \nn$.  This means $T\in \mathcal{C}_1$ cannot be isomorphic to a subspace of a quotient of $Z$.  

Case $2$: $\xi=\omega^{\gamma+1}$. Suppose $Z\in \mathcal{C}_{\omega^{\gamma+1}}$.  Then $I_w(\mathcal{H}^Z_{3^{-1}})= \xi < \omega^{\omega^{\gamma+1}}= \omega^{\omega^\gamma \omega}$.  This means there exists $k\in \nn$ so that $I_w(\mathcal{H}^Z_{3^{-1}})< \omega^{\omega^\gamma k}$.  By Theorem \ref{constant reduction}, for each $n\in \nn$, $I_w(\mathcal{H}^Z_{3^{-n}})< \omega^{\omega^\gamma kn}$.  But for any $n\in \nn$, the Tsirelson space $T=T( 2^{-1},\mathcal{S}_{\omega^\gamma k })$ has $\omega^{\omega^\gamma kn}< I_w(\mathcal{H}^T_{2^{-n}})$.  This means $T\in \mathcal{C}_{\omega^{\gamma+1}}$ cannot be isomorphic to a subspace of a quotient of $Z$.  

Case $3$: $\xi=\omega^\gamma$, $\gamma$ a limit ordinal.  By Theorem \ref{existence theorem}, $\sup_{X\in \mathcal{C}_\xi} Sz(X)=\omega^\xi$.  Therefore if $Z$ is either universal or surjectively universal for $\mathcal{C}_\xi$, then $Sz(Z)\geqslant \omega^\xi$.  But again by Theorem \ref{existence theorem}, there is no Banach space with this Szlenk index, so $Sz(Z)>\omega^\xi$.  

Case $4$: $\xi=\omega^{\alpha_1}n_1+\ldots + \omega^{\alpha_k}n_k$, where $\alpha_k>0$ and either $k>1$ or $k=1$ and $n_1>1$.   In this case, let $\zeta=\omega^{\alpha_1}n_1+\ldots + \omega^{\alpha_k}(n_k-1)$ and $\eta=\omega^{\alpha_k}$.  Fix $Z\in \mathcal{C}_\xi$. Then there exists a sequence $(\beta_n)\subset [0, \omega^\eta)$ so that $I_w(\mathcal{H}^Z_{3^{-n}})< \omega^\zeta \beta_n$.  But for each $n\in \nn$, the mixed Tsirelson space $T=T(\mathcal{S}_\zeta, (2^{-n}, \fff_{\beta_n}))$ satisfies $\omega^\zeta\beta_n< I_w(\mathcal{H}^T_{2^{-n}})$.  Then $T\in \mathcal{C}_\xi$ cannot be isomorphic to a subspace of a quotient of $Z$.

\end{proof}

\subsection{Injective tensor products}

If $X, Y$ are Banach spaces, we can consider the tensor product $X\otimes Y$ as a collection of finite rank operators mapping $Y^*$ into $X$.  That is, given $u=\sum_{i=1}^n x_i\otimes y_i$, $u(y^*)=\sum_{i=1}^n y^*(y_i)x_i$.  Of course, the adjoint $u^*$ of $u$ maps $X^*$ into the image of $Y$ in $Y^{**}$ under the canonical embedding, so we can equally well consider $u$ as a map from $X^*$ into $Y$.  We can endow $X\otimes Y$ with the operator norm and let $X\hat{\otimes}_\varepsilon Y$ denote the completion of $X\otimes Y$ with respect to the operator norm.   Of course, $X\hat{\otimes}_\varepsilon Y$ is contained within the space of compact operators from $Y^*$ to $X$.  

If $X$ has FDD $E$, then for any compact $u:Y^*\to X$, $P^E_{[1,n]}u\to u$ with respect to the operator norm.  This implies that if $Y$ also has FDD $F$, $P^E_{[1,n]}u(P^F_{[1,n]})^*\to u$ with respect to the norm topology. If $E, F$ are shrinking FDDs for $X,Y$, respectively, then $$H_n=\text{span}\{E_k\otimes F_j: \max \{k,j\}=n\}$$ defines a shrinking FDD for the injective tensor product $X\hat{\otimes}_\varepsilon Y$ \cite{Causey1}. Showing that this forms an FDD is straightforward, while showing that this FDD is shrinking involves a characterization of weak nullity in injective tensor products given in \cite{L}. For $u\in X\hat{\otimes}_\varepsilon Y$, the projection $P^H_{[1,n]}u$ is given by $P^E_{[1,n]}uP^{F^*}_{[1,n]}$.  We think about such $u$ as an infinite matrix the $j,k$ entry of which is a member of $E_j\otimes F_k$.  In this case, the projections $P^H_{[1,n]}$ are the $n\times n$ leading principal minors of this infinite matrix.  Then a block sequence $(u_n)$ with respect to $H$ can be considered as a sequence of square matrices so that there exist $0=k_0<k_1<\ldots$ so that $u_n$ is equal to its $k_n\times k_n$ leading principal minor, while its $k_{n-1}\times k_{n-1}$ leading principal minor is zero.  In this case, we can write $u_n=r_n+c_n$ so that $(r_n)$ is a sequence of successive rows and $(c_n)$ is a sequence of successive columns.   This simple decomposition leads to the following 

\begin{proposition}\cite{Causey1} Suppose $T$ is a Banach space with normalized, $1$-unconditional basis $(e_n)$.  Let $X, Y$ be Banach spaces with shrinking, bimonotone FDDs $E, F$ so that $E$ (resp. $F$) satisfies subsequential $C$-$T$ upper block estimates in $X$ (resp. $Y$).  Then the FDD $H$ for $X\hat{\otimes}_\varepsilon Y$ satisfies $2C$-$T$ upper block estimates in $X\hat{\otimes}_\varepsilon Y$.  

\end{proposition}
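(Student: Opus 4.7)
The plan is to exploit the row-plus-column decomposition of block sequences with respect to $H$ described in the paragraph preceding the proposition. Given a normalized block sequence $(u_n)$ with respect to $H$ in $X\hat{\otimes}_\varepsilon Y$, set $m_0=0$ and $m_n=\max\supp_H(u_n)$, so that each $u_n$ lies in $\bigoplus_{i=m_{n-1}+1}^{m_n}H_i$; viewed as an infinite matrix, $u_n$ vanishes outside the $m_n\times m_n$ upper-left square and inside the $m_{n-1}\times m_{n-1}$ upper-left square. Using the bimonotonicity of $E$, which transfers to a norm-one projection on $X\hat{\otimes}_\varepsilon Y$ via post-composition, I decompose
\[
u_n=r_n+c_n,\qquad r_n=P^E_{(m_{n-1},m_n]}u_n,\quad c_n=P^E_{[1,m_{n-1}]}u_n,
\]
so that $r_n$ has $E$-row support in $(m_{n-1},m_n]$ while $c_n$ has $E$-row support in $[1,m_{n-1}]$ and, by the annular $H$-support of $u_n$, $F$-column support in $(m_{n-1},m_n]$. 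Both summands satisfy $\|r_n\|,\|c_n\|\le 1$.

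By the triangle inequality it suffices to prove the two estimates
\[
\Bigl\|\sum a_n r_n\Bigr\|\le C\Bigl\|\sum a_n e_{m_n}\Bigr\|_T,\qquad \Bigl\|\sum a_n c_n\Bigr\|\le C\Bigl\|\sum a_n e_{m_n}\Bigr\|_T.
\]
For the row part, use the operator-norm representation
$\|\sum a_n r_n\|=\sup_{y^*\in B_{Y^*}}\|\sum a_n r_n(y^*)\|_X$. For fixed $y^*$, $(r_n(y^*))$ is a block sequence with respect to $E$ in $X$, each term of norm at most one, with $\max\supp_E(r_n(y^*))=:l_n(y^*)\in(m_{n-1},m_n]$ (skipping the indices on which it vanishes). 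Normalizing the nonzero terms and invoking the subsequential $C$-$T$ upper block estimate for $E$ in $X$, then using the $1$-unconditionality of $(e_n)$ to absorb the norms $\|r_n(y^*)\|\le 1$, yields
\[
\Bigl\|\sum a_n r_n(y^*)\Bigr\|_X\le C\Bigl\|\sum a_n e_{l_n(y^*)}\Bigr\|_T.
\]
Taking the supremum over $y^*$ and passing from $(e_{l_n(y^*)})$ to the uniform sequence $(e_{m_n})$ via the interleaving-equivalence property of the bases considered in the paper (the remark immediately following the definition of subsequential upper block estimates) gives the required estimate on $\|\sum a_n r_n\|$.

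The column part is then obtained by symmetry. The canonical isometry $X\hat{\otimes}_\varepsilon Y\cong Y\hat{\otimes}_\varepsilon X$ realized by operator adjunction turns the $F$-column support of $c_n$ into an $F$-row support of $c_n^*$ in $Y\hat{\otimes}_\varepsilon X$; the preceding argument, now with $F$ in $Y$ in place of $E$ in $X$, produces $\|\sum a_n c_n\|\le C\|\sum a_n e_{m_n}\|_T$. Adding the two bounds gives the advertised constant $2C$.

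The main obstacle I expect is the $y^*$-dependence of $l_n(y^*)$: the upper block estimate as literally stated dominates $(r_n(y^*))$ by $(e_{l_n(y^*)})$ rather than by the uniform $(e_{m_n})$. For the mixed Tsirelson-type bases that actually occur (the families $\mathcal{S}_\xi$ and $\mathcal{F}_\xi$ are spreading) this is the cited interleaving-equivalence property; if one wishes to avoid that appeal, one can perturb each $r_n(y^*)$ by $\delta w_n$ with $w_n\in E_{m_n}$ of norm one, apply the estimate in the form whose max-support equals $m_n$, and then let $\delta\to 0$.
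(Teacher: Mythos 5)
Your argument is correct and implements exactly the row-plus-column decomposition the paper describes in the paragraph preceding the proposition (the proof itself being cited from \cite{Causey1}): projecting $u_n$ onto the rows $(m_{n-1},m_n]$ and onto the rows $[1,m_{n-1}]$ (hence columns $(m_{n-1},m_n]$), both of norm at most $1$ by bimonotonicity, then treating rows directly via $\|\sum a_n r_n\|=\sup_{y^*}\|\sum a_n r_n(y^*)\|_X$ and columns by adjunction/symmetry. You also correctly identify the one genuine subtlety, namely that $\max\supp_E(r_n(y^*))$ is only bounded above by $m_n$ and depends on $y^*$.

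The one thing I would change is the order of your two resolutions of that subtlety. The $\delta$-perturbation you offer as a fallback should be the primary argument, not the interleaving-equivalence remark. The proposition is stated for an arbitrary Banach space $T$ with a normalized $1$-unconditional basis, and the paper's interleaving remark is explicitly restricted to the specific bases appearing in the main theorems of the cited works; for a general $T$ there is no reason for $(e_{l_n})$ and $(e_{m_n})$ with $m_{n-1}<l_n\leqslant m_n$ to be equivalent, and even when they are, the equivalence constant $D$ would degrade the conclusion to $2CD$, not the stated $2C$. The perturbation argument, by contrast, is unconditional and gives the exact constant: choosing $w_n\in E_{m_n}$ with $\|w_n\|=1$ and the $m_n$-th coordinate of $r_n(y^*)+\delta w_n$ nonzero, one has $\|r_n(y^*)+\delta w_n\|\leqslant 1+\delta$, so applying the estimate to the normalization and using $1$-unconditionality yields $\|\sum a_n(r_n(y^*)+\delta w_n)\|\leqslant C(1+\delta)\|\sum a_n e_{m_n}\|_T$; subtracting $\delta\|\sum a_n w_n\|\leqslant\delta\sum|a_n|$ and letting $\delta\to 0$ gives the $C$-bound uniformly in $y^*$, and the column estimate follows identically after adjunction, giving $2C$ in total.
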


\begin{theorem} For any separable, non-zero Banach spaces $X, Y$, $$Sz(X\hat{\otimes}_\varepsilon Y)= \max\{Sz(X), Sz(Y)\}.$$ \label{tensor theorem} \end{theorem}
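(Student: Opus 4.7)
The inequality $Sz(X\hat{\otimes}_\varepsilon Y)\geqslant \max\{Sz(X),Sz(Y)\}$ is immediate: for any unit vector $y_0\in Y$, the map $x\mapsto x\otimes y_0$ is an isometric embedding of $X$ into $X\hat{\otimes}_\varepsilon Y$ (its operator norm $Y^*\to X$ equals $\|x\|\cdot\|y_0\|$), and symmetrically for $Y$. Monotonicity of the Szlenk index under isomorphic embeddings then gives the bound.

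For the reverse inequality, we may assume both $X^*$ and $Y^*$ are separable, else the right-hand side is $\omega_1$. Set $\omega^\xi=\max\{Sz(X),Sz(Y)\}$. The first step is to replace $X$ and $Y$ by superspaces with shrinking, bimonotone FDDs of the same Szlenk index. The needed structure theorem — obtained by tailoring the mixed Tsirelson envelope construction to a single space rather than to all of $\mathcal{C}_\xi$ — asserts that every separable Banach space with separable dual and $Sz\leqslant \omega^\xi$ isomorphically embeds into a space with shrinking bimonotone FDD and the same Szlenk index. Let $X'\supset X$ and $Y'\supset Y$ be such superspaces. Because the injective tensor product is functorial under isomorphic embeddings, $X\hat{\otimes}_\varepsilon Y$ embeds isomorphically into $X'\hat{\otimes}_\varepsilon Y'$, so it suffices to bound $Sz(X'\hat{\otimes}_\varepsilon Y')$.

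Now apply Theorem \ref{tsirelson envelope} to each of $X'$ and $Y'$. By running the envelope construction with parallel choices of regular families, we obtain a \emph{single} mixed Tsirelson space $T$ with $Sz(T)=\omega^\xi$ such that, after blocking the FDDs of $X'$ and $Y'$, both satisfy subsequential $C$-$T$ upper block estimates. The Proposition on injective tensor products then produces a shrinking FDD $H$ on $X'\hat{\otimes}_\varepsilon Y'$ satisfying subsequential $2C$-$T$ upper block estimates. To convert this into a Szlenk bound we use the dualization of Lemma \ref{dualize}: given $\varepsilon>0$ and a normalized weakly null tree in $X'\hat{\otimes}_\varepsilon Y'$ indexed by $\widehat{\fff_\eta}$ with branches in $\mathcal{H}^{X'\hat{\otimes}_\varepsilon Y'}_\varepsilon$, a routine perturbation-and-pruning argument via Lemma \ref{pruning lemma} yields a block tree with respect to $H$ whose branches retain a slightly weakened $\ell_1^+$ lower estimate. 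The $2C$-$T$ upper block estimate forces the corresponding tree $(e_{m_n})$ in $T$ to admit an $\ell_1^+$-$\varepsilon/(4C)$ lower estimate, and hence $I_w(\mathcal{H}^{X'\hat{\otimes}_\varepsilon Y'}_\varepsilon)\leqslant I_w(\mathcal{H}^T_{\varepsilon/(4C)})$. Taking the supremum over $\varepsilon$ gives $Sz(X'\hat{\otimes}_\varepsilon Y')\leqslant Sz(T)=\omega^\xi$, completing the proof.

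The principal obstacle is the initial Szlenk-preserving embedding into a space with shrinking FDD. The naive route through Theorem \ref{existence} only delivers a superspace in $\mathcal{C}_{\xi+1}$, which is too weak; the sharper single-space version is a technical ingredient imported from the author's earlier work on embeddings into spaces with shrinking FDDs. A secondary point is arranging a common envelope $T$ for both $X'$ and $Y'$: one takes any mixed Tsirelson whose defining families simultaneously dominate those arising in the individual envelope constructions, while still satisfying $Sz(T)=\omega^\xi$; this is possible by the flexibility Proposition \ref{regular facts} affords in the choice of families. Once the embedding step is granted, the remainder of the proof is a clean combination of the tensor product Proposition with the envelope theorem and the standard pruning toolkit.
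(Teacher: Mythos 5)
Your proposal follows essentially the same route as the paper: bound from below via the isometric embedding $x\mapsto x\otimes y_0$, embed $X,Y$ into superspaces with shrinking bimonotone FDDs and the same Szlenk index, apply the mixed Tsirelson envelope theorem and the proposition on $H$ satisfying upper block estimates in $X\hat{\otimes}_\varepsilon Y$, and conclude. Two points are worth correcting.

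First, the Szlenk-preserving embedding into a space with shrinking bimonotone FDD is \emph{not} obtained by ``tailoring the mixed Tsirelson envelope construction to a single space''; it is a separate Zippin-type embedding result (the paper cites Schlumprecht's work on Zippin's embedding theorem). The envelope theorem presupposes a shrinking FDD; the embedding theorem supplies one. Conflating them does not break the argument since you invoke the correct statement, but the provenance is wrong.

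Second, Theorem \ref{tsirelson envelope} is stated for infinite-dimensional spaces, and a finite-dimensional space cannot be embedded in any infinite-dimensional space with the same Szlenk index (both would have to equal $1$). So your plan does not cover the case where $X$ or $Y$ is finite dimensional; the paper disposes of this case separately by observing that the tensor product is then a finite direct sum and invoking Theorem \ref{minkowski}. With those two corrections, your argument coincides with the paper's; your use of a single common envelope $T$ in place of $T_X\oplus_\infty T_Y$ matches the remark the paper makes immediately after its proof.
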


For this, we will need the following simple fact.   

\begin{proposition} If $(e_n)$ is a shrinking, normalized, $1$-unconditional basis for the space $T$, and if $F$ is a shrinking FDD for the Banach space $Z$ such that $F$ satisfies subsequential $C$-$T$ upper block estimates in $Z$, then $Sz(Z)\leqslant Sz(T)$.  \end{proposition}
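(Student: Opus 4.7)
The plan is to use the $\ell_1^+$ weakly null index characterization of the Szlenk index. Since $F$ is a shrinking FDD, $Z^*$ is separable and $\ell_1$ does not embed in $Z$; similarly, $(e_n)$ shrinking gives $T^*$ separable and $\ell_1$ not embedding in $T$. By the corollary following Lemma \ref{dualize}, it suffices to show that for every $\varepsilon \in (0,1)$ there exists $\delta > 0$ with $I_w(\mathcal{H}^Z_\varepsilon) \leqslant I_w(\mathcal{H}^T_\delta)$.

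Fix $\varepsilon \in (0,1)$ and $\xi < I_w(\mathcal{H}^Z_\varepsilon)$. By Proposition \ref{prop5}, after a harmless normalization (the tree members have norms in $[\varepsilon, 1]$), there is a normalized weakly null tree $(x_E)_{E \in \widehat{\fff}_\xi} \subset S_Z$ with branches in $\mathcal{H}^Z_\varepsilon$. I apply the perturbation and pruning of the first example after Lemma \ref{pruning lemma}, with $(\varepsilon_n) \subset (0, \varepsilon/2)$ decreasing to zero sufficiently fast, to pass to pruned subtrees $(y_E)$ of $(x_E)$ and $(u_E) \subset S_Z$ satisfying $\|y_E - u_E\| < \varepsilon_{|E|}$, and with $(u_{E|_i})_{i=1}^{|E|}$ a successively supported normalized block sequence with respect to $F$ for every $E \in \widehat{\fff}_\xi$. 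Branches of $(y_E)$ remain in $\mathcal{H}^Z_\varepsilon$ because they correspond to chains in $\fff_\xi$; as $\varepsilon_i < \varepsilon/2$, the corresponding branches of $(u_E)$ lie in $\mathcal{H}^Z_{\varepsilon/2}$.

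For each $E \in \widehat{\fff}_\xi$, set $m_E = \max \supp_F(u_E)$. The tree $(e_{m_E})_{E \in \widehat{\fff}_\xi} \subset S_T$ is weakly null in $T$: for $E \in (\fff_\xi)'$, the block structure forces $m_{E \verb!^! n} \to \infty$, and the shrinking basis $(e_n)$ is weakly null. For a maximal $E$, the subsequential $C$-$T$ upper block estimate applied to the normalized block sequence $(u_{E|_i})$ gives $\bigl\|\sum a_i u_{E|_i}\bigr\|_Z \leqslant C \bigl\|\sum a_i e_{m_{E|_i}}\bigr\|_T$ for all scalars. Combining with the positive-coefficient lower estimate $\bigl\|\sum a_i u_{E|_i}\bigr\|_Z \geqslant (\varepsilon/2) \sum a_i$ for $a_i \geqslant 0$ furnished by $(u_{E|_i}) \in \mathcal{H}^Z_{\varepsilon/2}$, and using $1$-unconditionality of $(e_n)$ to extend over signs, I obtain $\bigl\|\sum a_i e_{m_{E|_i}}\bigr\|_T \geqslant (\varepsilon/(2C)) \sum |a_i|$ for all scalars. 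Defining $\phi_0(\sum a_i e_{m_{E|_i}}) = \sum a_i$ on the span and applying Hahn--Banach produces $\phi \in T^*$ with $\|\phi\| \leqslant 2C/\varepsilon$; then $\phi_E := (\varepsilon/(2C))\phi \in B_{T^*}$ satisfies $\phi_E(e_{m_{E|_i}}) \geqslant \varepsilon/(2C)$, so $(e_{m_{E|_i}})_{i=1}^{|E|} \in \mathcal{H}^T_{\varepsilon/(2C)}$. Therefore $I_w(\mathcal{H}^T_{\varepsilon/(2C)}) > \xi$, and taking suprema over $\xi$ and then $\varepsilon$ yields $Sz(Z) \leqslant Sz(T)$. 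The only place requiring care is the initial perturbation step, which must simultaneously maintain normalization, branch-wise block structure with respect to $F$, and the $\mathcal{H}^Z_{\varepsilon/2}$ lower bound along branches; each piece is handled by machinery already established in the paper, so I expect no genuine obstacle.
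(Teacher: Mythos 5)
Your proposal is correct and follows the same route as the paper: extract a normalized block tree from the weakly null tree by perturbation and pruning, transfer it to $T$ via the subsequential upper block estimate together with $1$-unconditionality and Hahn--Banach, and conclude through the weakly null $\ell_1^+$ index characterization of the Szlenk index. The only difference is that you make explicit the perturbation and functional-extraction steps that the paper's brief sketch leaves implicit, which costs a harmless factor of $2$ in the constant.
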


If $\xi<Sz(Z)$, then we can find for some $\varepsilon>0$ a block tree $(z_E)_{E\in \widehat{\fff}_\xi}$ with branches in $\mathcal{H}^Z_\varepsilon$.  Then if $t_E= e_{\max \supp_F (z_E)}$, $(t_E)_{E\in \widehat{\fff_\xi}}\subset B_T$ is a weakly null tree in $T$ with branches lying in $\mathcal{H}^T_{\varepsilon/C}$.  This means $\xi<Sz(T)$.

\begin{proof}[Proof of Theorem \ref{tensor theorem}] If either $X^*$ or $Y^*$ is non-separable, the result is clear. If either space is finite dimensional, the result is immediate from Theorem \ref{minkowski}, since in this case the tensor product is isomorphic to a finite direct sum where each summand is either $X$ or $Y$.  Assume $Sz(X), Sz(Y)<\omega_1$.  

If $X$ is a closed subspace of $X_0$ and $Y$ is a closed subspace of $Y_0$, then $X\hat{\otimes}_\varepsilon Y$ is isomorphic to a subspace of $X_0\hat{\otimes}_\varepsilon Y_0$.  By a result of Schlumprecht \cite{S}, we can embed $X, Y$ into Banach spaces $X_0, Y_0$ with shrinking, bimonotone bases so that $Sz(X)=Sz(X_0)$ and $Sz(Y)=Sz(Y_0)$.  Thus it suffices to assume that $X, Y$ themselves have shrinking, bimonotone bases.  Then by Theorem \ref{tsirelson envelope}, we can find Banach spaces $T_X$, $T_Y$ with normalized, $1$-unconditional bases $(e^X_n), (e^Y_n)$ so that $Sz(T_X)=Sz(X)$ and $Sz(T_Y)=Sz(Y)$ and shrinking, bimonotone FDDs $E, F$ for $X, Y$, respectively so that $E$ satisfies subsequential $T_X$ upper block estimates in $X$ and $F$ satisfies subsequential $T_Y$ upper block estimates in $Y$.  Then if $e_n=e^X_n+e^Y_n\in T_X\oplus_\infty T_Y$, $E, F$ satisfy subsequential $[e_n]$ upper block estimates in $X, Y$, respectively.  Therefore the FDD $H$ is a shrinking FDD for $X\hat{\otimes}_\varepsilon Y$ satisfying subsequential $[e_n]$ upper block estimates in $X\hat{\otimes}_\varepsilon Y$.  We deduce that $$Sz(X\hat{\otimes}_\varepsilon Y)\leqslant Sz([e_n]) \leqslant Sz(T_X\oplus_\infty T_Y) =\max\{Sz(T_X), Sz(T_Y)\} \leqslant \max\{Sz(X), Sz(Y)\}.$$  

\noindent Since $X, Y$ both embed into $X\hat{\otimes}_\varepsilon Y$, the reverse inequality is clear.

\end{proof}

\begin{remark} It is unnecessary to take the direct sum $T_X\oplus_\infty T_Y$ in the previous proof.  It is easy to see how to modify the proof of Theorem \ref{tsirelson envelope} to find one mixed Tsirelson space which can play the roles of both $T_X$ and $T_Y$ simultaneously.

\end{remark}

\end{document}